\newtheorem{thm}{Theorem}[section]
\newtheorem{prop}[thm]{Proposition}
\newtheorem{lemma}[thm]{Lemma}
\newtheorem{cor}[thm]{Corollary}
\newtheorem{dfn}[thm]{Definition}
\theoremstyle{definition}
\theoremstyle{definition} \newtheorem{rmk}[thm]{Remark}
\newcommand{\cc}{\mathbb{C}}
\newcommand{\rr}{\mathbb{R}}
\newcommand{\qq}{\mathbb{Q}}
\newcommand{\zz}{\mathbb{Z}}
\newcommand{\ggr}{\mathbb{G}}
\newcommand{\aff}{\mathbb{A}}
\newcommand{\proj}{\mathbb{P}}
\newcommand{\Gal}{\mathrm{Gal}}
\newcommand{\GL}{\mathrm{GL}}
\newcommand{\PGL}{\mathrm{PGL}}
\newcommand{\Hom}{\mathrm{Hom}}
\newcommand{\End}{\mathrm{End}}
\newcommand{\Aut}{\mathrm{Aut}}
\newcommand{\Div}{\mathrm{Div}}
\newcommand{\Prin}{\mathrm{Prin}}
\newcommand{\Pic}{\mathrm{Pic}}
\newcommand{\ev}{\mathrm{ev}}
\newcommand{\mon}{\mathrm{mon}}
\newcommand{\Berk}{\mathbb{P}_{\cc_K}^{1, \mathrm{an}}}
\newcommand{\Hyp}{\mathbb{H}_{\cc_K}^{1, \mathrm{an}}}
\newcommand{\bbLambda}{\mathbin{\text{\reflectbox{\raisebox{\depth}{\scalebox{1}[-1]{$\mathbb V$}}}}}}
\title{Split degenerate superelliptic curves and $\ell$-adic images of inertia}
\author{Jeffrey Yelton}
\begin{document}

\maketitle

\begin{abstract}

Let $K$ be a field with a discrete valuation, and let $p$ and $\ell$ be (possibly equal) primes which are not necessarily different from the residue characteristic.  Given a superelliptic curve $C : y^p = f(x)$ which has split degenerate reduction over $K$, with Jacobian denoted by $J / K$, we describe the action of an element of the inertia group $I_K$ on the $\ell$-adic Tate module $T_\ell(J)$ as a product of powers of certain transvections with respect to the $\ell$-adic Weil pairing and the canonical principal polarization of $J$.  The powers to which the transvections are taken are given by a formula depending entirely on the cluster data of the roots of the defining polynomial $f$.  This result is demonstrated using Mumford's non-archimedean uniformization of the curve $C$.

\end{abstract}

\section{Introduction} \label{sec intro}

Let $K$ be a field which is Henselian with respect to a discrete valuation $v : K \twoheadrightarrow \zz$; let $\pi \in K$ denote a uniformizer (so that we have $v(\pi) = 1$).  Let $\bar{K}$ denote an algebraic closure of $K$.  We write $I_K$ for the inertia subgroup of the absolute Galois group $\Gal(\bar{K} / K)$ of $K$.  For this paper, we fix a prime $p$ and consider a \emph{superelliptic curve} $C / K$ which is a $p$-cover of the projective line $\proj_K^1$.  Our goal is to describe, for any prime $\ell$, the action of a particular element of $I_K$ on the $\ell$-adic Tate module of the Jacobian of $C$.  We develop the necessary set-up before presenting our main result as \Cref{thm main} in \S\ref{sec intro main result} below.

\subsection{Superelliptic curves, their Jacobians, and the associated $\ell$-adic Galois actions} \label{sec intro superelliptic}
The type of superelliptic curve we are interested in is a smooth projective curve determined by the affine model given by an equation of the form 
\begin{equation} \label{eq superelliptic}
C: y^p = f(x) = c \prod_{i = 1}^d (x - z_i)^{r_i},
\end{equation}
where $f(x) \in K[x]$ is a polynomial of degree $d$ and we have $1 \leq r_i \leq p - 1$ for $1 \leq i \leq d$.  (In the special case that $p = 2$, the curve $C$ is \emph{hyperelliptic}, and the polynomial $f$ is squarefree.)  The set $\mathcal{B}$ of branch points of the degree-$p$ map $x : C \to \proj_K^1$ coincides with $\{z_i\}_{1 \leq i \leq d}$ (resp. $\{z_i\}_{1 \leq i \leq d} \cup \{\infty\}$) if we have $p \mid d$ (resp. $p \nmid d$).  For simplicity, we assume throughout this paper that the polynomial $f$ is monic of degree not divisible by $p$ and that its roots $z_i$ all lie in $K$, noting that, given an equation of the form (\ref{eq superelliptic}), such a model can always be obtained after replacing $K$ with a finite algebraic extension, applying a fractional linear transformations to $x$ that moves one of the branch points to $\infty$, and applying an appropriate transformation to $y$.  In this case, one can easily compute from the Riemann-Hurwitz formula that the genus $g$ of $C$ is given by $g = \frac{1}{2}(p - 1)(d - 1)$.

Throughout this paper, for any group variety $G / K$ and integer $m \geq 1$, we write $G[m]$ for the $m$-torsion subgroup of $G(\bar{K})$.  We denote the Jacobian variety of $C$ by $J$; it is an abelian variety over $K$ of dimension $g$.  For each prime $\ell$, we let $\displaystyle T_\ell(J) := \lim_{\leftarrow n} J[\ell^n]$ denote the \emph{$\ell$-adic Tate module of $J$}; it is a free $\zz_\ell$-module of rank $2g$.  We write 
\begin{equation*}
\rho_{\ell} : \Gal(\bar{K} / K) \to \Aut(T_{\ell}(J))
\end{equation*}
 for the natural $\ell$-adic Galois action on this Tate module.  We write $J^\vee$ for the dual abelian variety of $J$ and $T_\ell(J^\vee)$ for its $\ell$-adic Tate module.  The Jacobian variety $J$ is equipped with the canonical principal polarization $J \stackrel{\sim}{\to} J^\vee$, which induces an isomorphism $T_\ell(J) \stackrel{\sim}{\to} T_\ell(J^\vee)$ of $\ell$-adic Tate modules.  We have the \emph{Weil pairing} given by 
\begin{equation*}
\mathfrak{e}_\ell : T_\ell(J) \times T_\ell(J^\vee) \to T_\ell(\bar{K}^\times) := \lim_{\leftarrow n} \boldsymbol{\mu}_{\ell^n}
\end{equation*}
 (where $\boldsymbol{\mu}_{\ell^n}$ denotes the group of $\ell^n$th roots of unity); it is a Galois-equivariant $\zz_\ell$-bilinear pairing.  On composing with the aforementioned isomorphism $T_\ell(J) \stackrel{\sim}{\to} T_\ell(J^\vee)$ on the second argument, this gives us the Galois-equivariant $\zz_\ell$-bilinear skew-symmetric pairing 
\begin{equation*}
e_\ell : T_\ell(J) \times T_\ell(J) \to T_\ell(\bar{K}^\times).
\end{equation*}
 
For each integer $n \geq 0$, we fix a primitive $n$th root of unity denoted by $\zeta_n$, and we choose these roots of unity so that $\zeta_{mn}^m = \zeta_n$ for any $m, n \in \zz_{\geq 0}$.  We make the additional assumption throughout this paper that we have $\zeta_p \in K$.  The curve $C$ then has an order-$p$ automorphism over $K$ which respects its degree-$p$ map onto $\proj_K^1$ and which is given by $(x, y) \mapsto (x, \zeta_p y)$.  This induces an order-$p$ $K$-automorphism of the Jacobian $J$, which we also denote by $\zeta_p$ and which realizes an embedding of the ring $\zz[\zeta_p]$ into the $K$-endomorphism ring of $J$.  In turn, the automorphism $\zeta_p$ of $J$ acts on the $\ell$-adic Tate module $T_\ell(J)$.  We denote the resulting automorphism again by $\zeta_p \in \Aut(T_\ell(J))$; in fact, this realizes $T_\ell(J)$ as a $\displaystyle \zz[\zeta_p]_\ell := \lim_{\leftarrow n} \zz[\zeta_p] / \ell^n \zz[\zeta_p]$-module.  (See \cite{garnek2024exponential} for more details.)

For any element $\mathfrak{w} \in T_\ell(J)$, we denote by $t_{\mathfrak{w}} \in \Aut(T_\ell(J))$ the \emph{transvection} with respect to $\mathfrak{w}$, given by $t_{\mathfrak{w}} : \mathfrak{v} \mapsto \mathfrak{v} + e_\ell(\mathfrak{v}, \mathfrak{w}) \mathfrak{w}$, where we are viewing $e_\ell$ as taking values in the additive group $\zz_\ell$ via the obvious group isomorphism $T_\ell(\bar{K}^\times) \stackrel{\sim}{\to} \zz_\ell$.
 
In \cite[\S2]{yelton2021boundedness}, the author used topological methods to describe the image in $\Aut(T_\ell(J))$ of an element of $I_K$ as a product of transvections under certain hypotheses in the hyperelliptic case when $p = 2$ is different from the residue characteristic of $K$: \cite[Proposition 2.6]{yelton2021boundedness} describes this image of inertia as being generated by a power of a single transvection under a particular hypothesis on the geometry of the roots $z_i$.  It is implicit from the results of \cite[\S2]{yelton2021boundedness} that, replacing the hypothesis that, under the weaker hypothesis that the only proper non-singleton \emph{clusters} (see \Cref{dfn cluster} below) have even cardinality, the image of inertia is generated by a product of powers of (commuting) transvections with respect to elements of $T_\ell(J)$ which each correspond to one of these clusters.  The object of this paper is to prove a generalization of this for superelliptic curves, under the new hypothesis that the curve satisfy a condition called \emph{split degenerate reduction}, but without any restriction on the residue characteristic of $K$.

\subsection{Split degenerate superelliptic curves and cluster data} \label{sec intro clusters}

A curve $C$ of genus $g$ is said to have \emph{split degenerate reduction} over $K$ if it has a semistable model over the ring of integers of $K$ whose special fiber consists only of components which are each isomorphic to the projective line over the reside field.  There are several equivalent formulations for this condition on the special fiber of a semistable model of $C$, one of them being that the graph of components of its special fiber has $g$ loops, which in turn is equivalent to saying that this special fiber has (maximal) toric rank $g$.  By definition, this last condition means that identity component $(\mathcal{J}_s)_0$ of the special fiber of the N\'{e}ron model $\mathcal{J}$ of the Jacobian $J$ is an split torus (of rank $g$) over the residue field; in other words, the resulting abelian variety $J$ has \emph{split toric reduction}.

When a superelliptic curve $C$ has split degenerate reduction, we can say more about the form of the equation (\ref{eq superelliptic}) used to define the curve.  Letting $h = \frac{d - 1}{2} = \frac{g}{p - 1}$, by \cite[Proposition 3.1(a)]{van1982galois}, we may label the roots of the defining polynomial $f$ as $\alpha_0, \alpha_1, \beta_1, \dots, \alpha_h, \beta_h$ in such a way that $C$ has an equation of the form 
\begin{equation} \label{eq split degenerate superelliptic}
y^p = (x - \alpha_0)^{m_0} \prod_{i = 1}^h (x - \alpha_i)^{m_i} (x - \beta_i)^{p - m_i}
\end{equation}
for some integers $m_i$ each satisfying $1 \leq m_i \leq p - m_i$.  Letting $\beta_0 = \infty$, the set $\mathcal{B}$ of branch points of the map $C \to \proj_K^1$ is then given by $\{\alpha_i, \beta_i\}_{0 \leq i \leq h}$.

The $\ell$-adic action described by \Cref{thm main} below is determined by the \emph{cluster data} of this set of branch points; in order to make this precise, we introduce the language of \emph{clusters}, following \cite{dokchitser2022arithmetic}.  (In the following definition and elsewhere below, we use the notation $\proj_L^1$ for the set $L \cup \{\infty\}$ for any extension $L / K$.)

\begin{dfn} \label{dfn cluster}

Let $A \subset \proj_K^1$ be a finite subset.  A subset $\mathfrak{s} \subseteq A$ is called a \emph{cluster} (of $A$) if there is some subset $D \subset K$ which is a disc under the metric induced by the valuation $v : K^\times \to \zz$ such that $\mathfrak{s} = A \cap D$.  The \emph{depth} of a non-singleton cluster $\mathfrak{s}$ is the integer 
\begin{equation}
d(\mathfrak{s}) := \min_{z, z' \in \mathfrak{s}, \, z \neq z'} v(z - z').
\end{equation}

Given any two clusters $\mathfrak{s}_1, \mathfrak{s}_2 \subset A$, we denote by $\mathfrak{s}_1 \vee \mathfrak{s}_2$ the smallest cluster which contains both $\mathfrak{s}_1$ and $\mathfrak{s}_2$ (it is easy to show that such a cluster always exists).

\end{dfn}

In our context, the set $A$ is the set of branch points $\mathfrak{B} = \{\alpha_i, \beta_i\}_{0 \leq i \leq h}$.  For $1 \leq i \leq h$, we write $\mathfrak{s}_i$ for the smallest cluster containing both $\alpha_i$ and $\beta_i$ (noting that we have $\alpha_i, \beta_i \in K$).

\begin{dfn} \label{dfn ubereven}

We say that a cluster $\mathfrak{s} \subset \mathcal{B}$ is \emph{\"{u}bereven} if it satisfies the property that, for each cluster $\mathfrak{c}$ which is not the disjoint union of $\geq 2$ even-cardinality clusters, we have 
\begin{equation} \label{eq ubereven}
d(\mathfrak{s}) + d(\mathfrak{c}) - 2d(\mathfrak{s} \vee \mathfrak{c}) > \tfrac{pv(p)}{p-1}.
\end{equation}
(In particular, an \"{u}bereven cluster itself is the disjoint union of $\geq 2$ even-cardinality clusters.)

\end{dfn}

From now on, let $\mathfrak{C}$ denote the set of non-singleton clusters of $\mathcal{B}$ which either are \"{u}bereven or are not the disjoint union of $\geq 2$ even-cardinality clusters, and let $\mathfrak{C}_0 \subseteq \mathfrak{C}$ be the subset consisting of those clusters with even cardinality.

\subsection{Main result} \label{sec intro main result}

We are now ready to present our main result, which describes a symplectic automorphism of the $\ell$-adic Tate module $T_\ell(J)$ which is in the image of inertia.

\begin{thm} \label{thm main}

Let $C$ be a split degenerate superelliptic curve over $K$ which is determined by an equation of the form (\ref{eq split degenerate superelliptic}).  For any subset $\mathfrak{E} \subseteq \mathfrak{C}$, let $u_{\mathfrak{E}} \in \{0, \dots, \#\mathfrak{E}\}$ denote the number of clusters in $\mathfrak{E}$ which are \"{u}bereven.  Given a cluster $\mathfrak{s} \in \mathfrak{C}_0$, among all clusters $\mathfrak{c} \in \mathfrak{C}$ satisfying $d(\mathfrak{s}) - d(\mathfrak{s} \vee \mathfrak{c}) > (1 - u_{\{\mathfrak{s}\}})\frac{pv(p)}{p-1}$, let $\mathfrak{s}'$ be one which minimizes the value $d(\mathfrak{s}) + d(\mathfrak{c}) - 2d(\mathfrak{s} \vee \mathfrak{c}) - (2 - u_{\{\mathfrak{s}, \mathfrak{c}\}})\frac{pv(p)}{p-1}$ (such a cluster exists and is unique by \Cref{cor uniqueness used in main theorem} below).  Let 
\begin{equation}
m_{\mathfrak{s}} = d(\mathfrak{s}) + d(\mathfrak{s}') - 2d(\mathfrak{s} \vee \mathfrak{s}') - (2 - u_{\{\mathfrak{s}, \mathfrak{s}'\}})\tfrac{pv(p)}{p-1}.
\end{equation}

Choose a prime $\ell$.  Let $\sigma \in I_K$ be a Galois automorphism which, for each integer $m \geq 0$, fixes all $m$th roots in $\bar{K}$ of elements of $K$ of valuation $0$ and maps any $m$th root $\pi^{1/m}$ of any uniformizer $\pi$ of $K$ to $\zeta_m \pi^{1/m}$.  There is a $\zz[\zeta_p]_\ell$-submodule $\mathbb{T}_\ell \subset T_\ell(J)$, which coincides with the submodule $T_\ell(J)^{I_K}$ of elements fixed by inertia in the case that $\ell$ is not the residue characteristic, with a $\zz[\zeta_p]_\ell$-basis $\{\mathfrak{v}_1, \dots, \mathfrak{v}_h\}$ such that the action of $\sigma$ on $T_\ell(J)$ is given by 
\begin{equation} \label{eq main product of transvections}
\rho_\ell(\sigma) = \prod_{\mathfrak{s} \in \mathfrak{C}_0} \Big(\prod_{0 \leq n \leq p - 1} t_{\zeta_p^n \mathfrak{w}_{\mathfrak{s}}}^{m_{\mathfrak{s}}}\Big).
\end{equation}
For the formula above, we define $\mathfrak{w}_{\mathfrak{s}} = \sum_i \mathfrak{v}_i \in T_\ell(J)^{I_K}$, with the sum taken over all $i$ such that we have $\mathfrak{s}_i \subseteq \mathfrak{s}$ and do not have $\mathfrak{s}_i \subsetneq \mathfrak{s}_l \subseteq \mathfrak{s}$ for any $l \neq i$.

Moreover, if $\ell = p$, the modulo-$p$ image of the element $(\zeta_p + 2\zeta_p^2 + \dots + (p - 1)\zeta_p^{p - 1})\mathfrak{v}_i \in \mathbb{T}_\ell \subset T_\ell(J)$ is the $p$-torsion point of $J(K)$ represented by a multiple\footnote{\Cref{cor integral combination of v_i's} below specifies that the integer multiplier is $1$ when $\zeta_p \in \Aut(C)$ is defined as in \S\ref{sec background jacobians}.} of the divisor $\sum_j m_j ((\alpha_j, 0) - (\beta_j, 0)) \in \Div_0(C)$, where the sum is taken over all indices $j$ satisfying $d(\mathfrak{s}_i) - d(\mathfrak{s}_i \vee \mathfrak{s}_j) \leq \frac{pv(p)}{p-1}$ and $m_j$ is the exponent appearing in (\ref{eq split degenerate superelliptic}).

\end{thm}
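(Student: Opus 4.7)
The plan is to invoke Mumford's non-archimedean uniformization of $J$, which applies precisely because $C$ has split degenerate reduction. Mumford's theory produces a rigid-analytic identification $J^{\mathrm{an}} \cong (\mathbb{G}_m^g)^{\mathrm{an}}/\Lambda$ for a discrete lattice $\Lambda$ of rank $g$, and passage to Tate modules gives a short exact sequence
\begin{equation*}
0 \to \mathbb{T}_\ell \to T_\ell(J) \to \Lambda \otimes_{\zz} \zz_\ell \to 0
\end{equation*}
in which $\mathbb{T}_\ell$ is the Tate module of the uniformizing torus. This $\mathbb{T}_\ell$ will be the submodule named in the theorem; it is pointwise fixed by $I_K$ whenever $\ell$ differs from the residue characteristic, because the torus is already defined over the residue field.

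First I would construct the desired basis. The lattice $\Lambda$ carries a natural $\zz[\zeta_p]$-action (induced by the $\zeta_p$-automorphism of $C$), and I would show it is $\zz[\zeta_p]$-free of rank $h$, with a basis indexed by the cluster pairs $\mathfrak{s}_i$ for $1 \leq i \leq h$; cocharacter duality then yields the $\zz[\zeta_p]_\ell$-basis $\{\mathfrak{v}_1, \dots, \mathfrak{v}_h\}$ of $\mathbb{T}_\ell$. Next I would compute the action of $\sigma$. Lifting a lattice element $\lambda \in \Lambda$ to $T_\ell(J)$ amounts to choosing a compatible system of $\ell^n$-th roots of its period $q_\lambda \in K^\times$, and by the definition of $\sigma$ this lift is shifted by $v(q_\lambda)$ times the corresponding cocharacter inside $\mathbb{T}_\ell$. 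Through the Weil pairing, which on such lifts specializes to the period pairing, this shift realizes $\rho_\ell(\sigma)$ as a product of transvections, one per basis element of $\Lambda$. Grouping the $p - 1$ generators of each $\zeta_p$-orbit produces exactly the shape of (\ref{eq main product of transvections}), with $\mathfrak{w}_{\mathfrak{s}}$ arising as the dual of the cycle in the dual graph associated to the cluster $\mathfrak{s}$.

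The main obstacle is identifying the exponents $m_{\mathfrak{s}}$ in terms of cluster data. This requires computing valuations of periods combinatorially from the cluster picture together with a careful analysis of the skeleton of $C^{\mathrm{an}}$ inside $\Berk$. The expression $d(\mathfrak{s}) + d(\mathfrak{c}) - 2 d(\mathfrak{s} \vee \mathfrak{c})$ should emerge as a skeletal length between vertices associated to $\mathfrak{s}$ and $\mathfrak{c}$, while the correction term $\frac{pv(p)}{p-1}$ accounts for the wild contribution to the conductor when $p$ equals the residue characteristic (and vanishes otherwise). The ultrametric hierarchy of cluster depths then forces both the existence and the uniqueness of the minimizing $\mathfrak{s}'$, supplying the corollary invoked in the statement; the combinatorial identity $m_{\mathfrak{s}} = d(\mathfrak{s}) + d(\mathfrak{s}') - 2 d(\mathfrak{s} \vee \mathfrak{s}') - (2 - u_{\{\mathfrak{s},\mathfrak{s}'\}})\frac{pv(p)}{p-1}$ then matches the period-valuation computation on each $\zeta_p$-orbit.

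For the final $\ell = p$ assertion, I would observe that $\zeta_p + 2\zeta_p^2 + \dots + (p - 1)\zeta_p^{p - 1}$ is a unit multiple of a uniformizer of $\zz[\zeta_p]_p$, so its action on $\mathfrak{v}_i$ produces the unique $p$-torsion element in the $\zz[\zeta_p]_p$-line spanned by $\mathfrak{v}_i$. Tracing this element through the identification of $J[p]$ with the appropriate quotient of $\Div_0(C)$ via Abel--Jacobi, and using the explicit description of how the uniformization sees the branch points in the reduction of $C$, should identify it with the divisor $\sum_j m_j ((\alpha_j, 0) - (\beta_j, 0))$, the proximity bound $d(\mathfrak{s}_i) - d(\mathfrak{s}_i \vee \mathfrak{s}_j) \leq \frac{pv(p)}{p-1}$ picking out exactly those branch points whose contribution survives reduction along the component labeled by $\mathfrak{s}_i$.
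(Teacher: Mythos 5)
Your high-level outline -- Mumford uniformization, the short exact sequence $0 \to \mathbb{T}_\ell \to T_\ell(J) \to \Lambda \otimes \zz_\ell \to 0$, period-valuation arithmetic, $\zz[\zeta_p]$-structure, and a Berkovich-skeleton translation into cluster data -- matches the architecture of the paper's argument. But there are genuine gaps at exactly the points where the proof is hard. The central one is your claim that the action of $\sigma$ ``realizes $\rho_\ell(\sigma)$ as a product of transvections, one per basis element of $\Lambda$.'' That is not the shape of the answer. The transvections in (\ref{eq main product of transvections}) are indexed by the set $\mathfrak{C}_0$, which contains, in addition to the $h$ clusters $\mathfrak{s}_i$, the \"{u}bereven clusters; and for \"{u}bereven $\mathfrak{s}$ the element $\mathfrak{w}_{\mathfrak{s}}$ is a nontrivial sum of several $\mathfrak{v}_i$'s, not a basis element. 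The real content of the proof is a specific decomposition of the symmetric pairing $(\gamma,\gamma')\mapsto v(c_\gamma(\gamma'))$ into rank-one pieces indexed by $\mathfrak{C}_0$ (Lemma \ref{lemma monodromy pairing equals sum of transvection pairings}), and your proposal gives no mechanism that produces this particular decomposition rather than some arbitrary one obtained by diagonalizing a Gram matrix. Equally essential, and absent from your sketch, is the Drinfeld--Manin period formula (Theorem \ref{thm drinfeld-manin}), which turns the valuations $v(c_\gamma(\gamma'))$ into signed intersection lengths of paths in the Bruhat--Tits tree; without it you have no way to actually compute the Gram matrix.

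Two further gaps. First, you only argue $\mathbb{T}_\ell \subseteq T_\ell(J)^{I_K}$ for $\ell$ away from the residue characteristic, but the theorem asserts equality; the reverse inclusion is where Grothendieck's orthogonality theorem enters (Remark \ref{rmk submodule fixed by inertia}). Second, your explanation of the $\frac{pv(p)}{p-1}$ correction as ``wild conductor contribution'' is a slogan rather than a mechanism: in the paper it comes from the dilation map $\pi_* : \Sigma_S \to \Sigma_{\mathcal{B}}$ of Theorem \ref{thm distances between neighboring axes}, which magnifies distances inside tubular neighborhoods of the fixed-point axes by a factor of $p$, combined with the disjointness result Theorem \ref{thm disjointness of neighborhoods}; both rely on nontrivial prior work and cannot be recovered from generalities about conductors. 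Finally, for the $\ell = p$ claim, the observation that $\sum_{k}k\zeta_p^k$ is (for $p\geq 3$) a power of the uniformizer $(1-\zeta_p)$ of $\zz[\zeta_p]_p$ is true but tangential -- it fails to identify which $p$-torsion point you land on; the paper needs the explicit theta-function computations of \S\ref{sec torsion} (Theorem \ref{thm certain p-torsion points} and its supporting lemmas) to trace the Abel--Jacobi image of $(\alpha_i,0)-(\beta_i,0)$ and match it against the mod-$p$ reduction of the given $\zz[\zeta_p]$-linear combination of the $\mathfrak{v}_i$.
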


\begin{rmk} \label{rmk g exists}

One sees that an automorphism $\sigma \in I_K$ with the properties specified in the statement of \Cref{thm main} exists by the theory of Kummer extensions on observing that the multiplicative group $K^\times$ is generated over its subgroup of valuation-$0$ elements by $\pi$.  If $\ell$ is not the residue characteristic of $K$, the image of the element $\sigma$ in the maximal pro-$\ell$ quotient of $I_K$ topologically generates this quotient, and the image $\rho_\ell(\sigma) \in \Aut(T_\ell(J))$ generates the $\ell$-adic image of inertia.

\end{rmk}

\begin{rmk} \label{rmk commuting transvections}

It follows from \Cref{rmk transvections on toric part} below that the pairing $e_\ell$ restricted to the submodule $\mathbb{T}_\ell \subset T_\ell(J)$ is trivial.  One checks as an elementary exercise that for any pair of elements $\mathfrak{w}, \mathfrak{w}' \in T_\ell(J)$, the transvections $t_{\mathfrak{w}}$ and $t_{\mathfrak{w}'}$ commute if and only if $e_\ell(\mathfrak{w}, \mathfrak{w}')$ is trivial.  It follows that the transvections in the product in (\ref{eq main product of transvections}) commute, and so that product as written is well defined.

\end{rmk}

\begin{rmk} \label{rmk cases of main theorem}

In particular cases, the conclusion of the above theorem becomes considerably simpler.  We point out several of these below.  For these examples, we define the \emph{parent cluster} of a cluster $\mathfrak{s}$ to be the smallest cluster properly containing $\mathfrak{s}$, if one exists; in this situation, we write $\delta(\mathfrak{s})$ for the depth of $\mathfrak{s}$ minus the depth of its parent.

\begin{enumerate}[(a)]

\item Suppose that $p$ is not the residue characteristic of $K$.  Then all clusters which are the disjoint union of $\geq 2$ even-cardinality clusters are \"{u}bereven; the set $\mathfrak{C}$ consists of all non-singleton clusters; and one can show without much difficulty that for each cluster $\mathfrak{s} \in \mathfrak{C}_0$, the cluster $\mathfrak{s}'$ is just the parent of $\mathfrak{s}$.  Then the formula for $m_{\mathfrak{s}}$ becomes much simpler: we get $m_{\mathfrak{s}} = \delta(\mathfrak{s})$.

\item Suppose instead that no cluster of $\mathcal{B}$ is a union of $\geq 2$ even-cardinality clusters, so that in particular there are no \"{u}bereven clusters.  Then one sees using \Cref{prop dictionary}(c) below that the set $\mathfrak{C}_0$ consists only of the clusters $\mathfrak{s}_1, \dots, \mathfrak{s}_h$, which comprise all of the even-cardinality clusters.  Moreover, as in part (a), for $1 \leq i \leq h$, the cluster $\mathfrak{s}'_i$ is just the parent of $\mathfrak{s}_i$.  The main conclusion of \Cref{thm main} now simplifies to stating that 
\begin{equation} \label{eq main product of transvections no ubereven}
\rho_\ell(\sigma) = \prod_{i = 1}^h \Big(\prod_{0 \leq n \leq p - 1} t_{\zeta_p^n \mathfrak{v}_i}^{r_i}\Big),
\end{equation}
with $r_i = \delta(\mathfrak{s}_i) - \frac{2pv(p)}{p-1}$.

\item Finally, we consider the hyperelliptic case (where $p = 2$).  Noting that (as an elementary property of transvections) for any $v \in T_\ell(J)$, we have $t_{-v} = t_v$, we may rewrite (\ref{eq main product of transvections}) as 
\begin{equation}
\rho_\ell(\sigma) = \prod_{\mathfrak{s} \in \mathfrak{C}_0} t_{\mathfrak{w}_{\mathfrak{s}}}^{2m_{\mathfrak{s}}}.
\end{equation}
Moreover, if we assume that the residue characteristic of $K$ is not $2$, then we even get 
\begin{equation}
\rho_\ell(\sigma) = \prod_{\mathfrak{s} \in \mathfrak{C}_0} t_{\mathfrak{w}_{\mathfrak{s}}}^{2\delta(\mathfrak{s}) - (4 - 2u_{\mathfrak{s}})v(2)}, 
\end{equation}
where $u_{\mathfrak{s}}$ is now just the number of clusters among $\mathfrak{s}$ and its parent $\mathfrak{s}'$ which are themselves the disjoint union of $\geq 2$ even-cardinality clusters.

If we further assume that $\ell = 2$, the statement of \Cref{thm main}(a) says that the image of each element $\mathfrak{v}_i \in T_2(J)$ is represented by the divisor $(\alpha_i, 0) - (\beta_i, 0) \in \Div_0(C)$.  The result for this last special case can also be shown from the results of \cite[\S2]{yelton2021boundedness}, although the analog appearing as \cite[Proposition 2.6]{yelton2021boundedness} applies not to the situation of split degenerate reduction (unless $g = 1$) but to the case of only one proper non-singleton cluster, that cluster having cardinality $2$.

\item We finally mention that if we further assume that $C : y^2 = (x - \alpha_0) (x - \alpha_1) (x - \beta_1)$ is an elliptic curve with split multiplicative reduction over $K$, then the only cluster in $\mathfrak{C}_0$ is $\mathfrak{s}_1 = \{\alpha_1, \beta_1\}$.  One may compute using \cite[Proposition 1.7(b)]{silverman2009arithmetic} that the $j$-invariant $j(C)$ satisfies $v(j(C)) = 8v(2) - 2\delta(\mathfrak{s}_1)$; in particular, as the multiplicative reduction property implies $v(j(C)) < 0$, we get $\delta(\mathfrak{s}_1) > 4v(2)$.  (One can even show this independently by applying \Cref{thm disjointness of neighborhoods} to this special case.)  Now we simply get that $\mathfrak{v}_1$ reduces modulo $2$ to the order-$2$ divisor class represented by $(\alpha_1, 0) - (\beta_1, 0) \in \Div_0(C)$ and 
\begin{equation}
\rho_\ell(\sigma) = t_{\mathfrak{v}_1}^{2\delta(\mathfrak{s}_1) - 8v(2)} = t_{\mathfrak{v}_1}^{-j(C)}.
\end{equation}
Apart from the explicit description of the modulo-$2$ image of $\mathfrak{v}_i$, this can be more or less deduced from \cite[Exercise 5.13(b)]{silverman1994advanced}.  But knowing the modulo-$2$ image of $\mathfrak{v}_i$ allows us to apply this result to a global setting using the methods of \cite[\S3,4]{yelton2021boundedness} and for instance to strengthen the result given in Example 4.3 of that paper (in which $\alpha_0 := 1, \alpha_1 := 0, \beta_1 := \lambda$ lie in a global ground field $L$) to one that allows for either of the prime ideals $\mathfrak{p}_1 \mid (\lambda)$ and $\mathfrak{p}_2 \mid (\lambda - 1)$ of $\mathcal{O}_L$ to have residue characteristic $2$.  We expect that one may be able to use \Cref{thm main} to pave the way toward similar results in a global setting for $g \geq 2$ or $p \geq 3$.

\end{enumerate}

\end{rmk}

\subsection{Outline of the paper}

In \S\ref{sec background}, we provide the necessary background on non-archimedean uniformization of split degenerate curves (as a subset $\Omega \subset \proj_K^1$ modulo the action of a subgroup $\Gamma < \PGL_2(K)$ with certain properties) and of their Jacobians (as a torus $T := \Hom(\Gamma, K^\times)$ modulo a lattice $\Lambda := \{c_\gamma\}_{\gamma \in \Gamma}$ generated by certain characters $c_\gamma$ defined there), emphasizing particular properties that hold when the curve is superelliptic.  We also discuss the dual $J^\vee \cong T^\vee / \Lambda^\vee$ of such a Jacobian $J \cong T / \Lambda$ as well as the Weil pairings of $J$ with $J^\vee$, culminating in an explicit description of the Weil pairing given by \Cref{prop explicit Weil pairing}.  In \S\ref{sec berk} we develop the necessary background theory on viewing the set $\mathcal{B}$ through certain subspaces of the Berkovich projective line $\Berk$ and show that the cluster $\mathfrak{s}'$ in the statement of \Cref{thm main} is well defined.  The goal of \S\ref{sec period matrix} is then to build on the set-up established in \S\ref{sec berk} to get an explicit formula (given as \Cref{thm values of pairing}) for the pairing $(\gamma, \gamma') \mapsto v(c_\gamma(\gamma'))$; this is done with respect to a convenient basis and therefore provides the valuations of the entries of a \emph{period matrix} of the lattice $\Lambda$ defining the Jacobian.  We then proceed in \S\ref{sec monodromy} to associate unipotent operators in $\Aut(T_\ell(J))$ to pairings of $\Lambda \otimes \zz_\ell$ with $\Lambda^\vee \otimes \zz_\ell$, showing (as \Cref{prop monodromy pairing}) that the pairing associated to $\rho_\ell(\sigma)$ is the same as $(\gamma, \gamma') \mapsto v(c_\gamma(\gamma'))$, and providing (as \Cref{prop explicit transvection}) formulas for the pairings associated to transvections with respect to certain elements of $T_\ell(J)$.  In \S\ref{sec proof of main}, we use our formulas for pairings established in \S\ref{sec period matrix} and \S\ref{sec monodromy} to show that the automorphism $\rho_\ell(\sigma)$ is equal to the product of transvections claimed by \Cref{thm main}.  We finish the paper with \S\ref{sec torsion}, which strengthens the results of \cite[\S1]{van1983note} and proves the last claim of \Cref{thm main} (concerning each generator $\mathfrak{v}_i$ in the $\ell = p$ case).

\subsection{Acknowledgments}

The author would like to thank Yaacov Kopeliovich for useful discussions on non-archimedean uniformization of Jacobians and in particular for pointing him towards the article \cite{van1983note}, enabling him to produce the results of \S\ref{sec torsion}.

\section{Background on non-archimedean uniformization of curves and their Jacobians} \label{sec background}

This section lays out all of the necessary background information on the uniformization of curves in the non-archimedean setting and the construction of their Jacobians as analytic tori, as well more explicit background results in the special case of superelliptic curves.  Our main sources of the theory of non-archimedean uniformization for general curves (with split degenerate reduction) and their Jacobians are \cite[Chapters III, IV, VI]{gerritzen2006schottky} and \cite[\S6, 7]{papikian2013non}, while our main source regarding the superelliptic case \cite{van1982galois} and \cite{van1983note}; many more details can be found in these sources.  (Our definition of theta functions follows the formula given in \cite[\S II.2]{gerritzen2006schottky} and \cite{van1983note} rather than in \cite{drinfeld1973periods} or \cite[\S7]{papikian2013non}.)

\subsection{Schottky groups and non-archimedean uniformization of split degenerate curves} \label{sec background Schottky}

Mumford showed in his groundbreaking paper \cite{mumford1972analytic} that any curve $C / K$ (not necessarily superelliptic) of genus $g \geq 1$ can be realized as a quotient of a certain subset $\Omega \subset \proj_K^1$ by the action of a free subgroup $\Gamma < \PGL_2(K)$ of $g$ generators via fractional linear transformations if and only if the the curve $C$ has split degenerate reduction.  The free subgroup $\Gamma < \PGL_2(K)$ must act discontinuously on $\proj_K^1$ (\textit{i.e.} the set of limit points under its action must not coincide with all of $\proj_K^1$), and the subset $\Omega \subset \proj_K^1$ such that $C$ can be uniformized as the quotient $\Omega / \Gamma$ coincides with the set of non-limit points.  Such a subgroup $\Gamma < \PGL_2(K)$ is called a \emph{Schottky group}.  This main result on non-archimedean uniformization of curves is given as \cite[Theorem 4.20]{mumford1972analytic} and \cite[Theorems III.2.2, III.2.12.2, and IV.3.10]{gerritzen2006schottky}.  We comment that in the special case of $g = 1$, after applying an appropriate automorphism of $\proj_K^1$ we get $\Omega = \proj_K^1 \smallsetminus \{0, \infty\} = K^\times$ and that the Schottky group $\Gamma$ is generated by the fractional linear transformation $z \mapsto qz$ for some element $q \in K^\times$ of positive valuation, and thus we recover the Tate uniformization $C \cong K^\times / \langle q \rangle$ established in \cite{tate1995review}.

It is shown in \cite[\S9.2]{gerritzen2006schottky} and \cite[\S1]{van1983non} (for the $p = 2$ case) and in \cite[\S2]{van1982galois} (for general $p$) that given a prime $p$ and a split degenerate curve $C / K$ of genus $g = (p - 1)h$ realized as such a quotient $\Omega / \Gamma$, the curve $C$ is superelliptic and a degree-$p$ cover of $\proj_K^1$ if and only if $\Gamma$ is normally contained in a larger subgroup $\Gamma_0 < \PGL_2(K)$ generated by $h + 1$ elements $s_0, \dots, s_h$ whose only relations are $s_0^p = \dots = s_h^p = 1$.  In this situation, the Schottky group is given by 
\begin{equation} \label{eq Gamma}
\Gamma = \langle \gamma_{i, 0, n} := s_0^{n - 1} s_i s_0^{-n} \rangle_{1 \leq i \leq h, \, 1 \leq n \leq p - 1}.
\end{equation}
It is an elementary group-theoretic exercise to show from these definitions that the generators $\gamma_{i, 0, n}$ of $\Gamma$ satisfy no group relations and that we have $\Gamma \lhd \Gamma_0$ and $[\Gamma : \Gamma_0] = p$.  (This implies in particular that $\Omega$ is the set of non-limit points under the action of $\Gamma_0$ as well.)  In fact, it turns out that we have $\Omega / \Gamma_0 \cong \proj_K^1$ so that the natural surjection $\Omega / \Gamma \twoheadrightarrow \Omega / \Gamma_0$ is just the degree-$p$ covering map $C \to \proj_K^1$.

As each element $s_i \in \Gamma_0 \smallsetminus \Gamma$ has order $p$ and normalizes $\Gamma$, it induces an order-$p$ automorphism on the quotient $C \cong \Omega / \Gamma$ which respects the covering map $C \to \proj_K^1 \cong \Omega / \Gamma_0$, by sending the image modulo $\Gamma$ of each point $z \in \Omega$ to the image modulo $\Gamma$ of $s_i(z) \in \Omega$.  It follow from the easily-verified fact that we have $s_j s_i^{-1} \in \Gamma$ that this automorphism of $C$ does not depend on the index $i$.  The group of such automorphisms of $C$ is generated by the order-$p$ automorphism given by $(x, y) \mapsto (x, \zeta_p y)$; let us now assume that the primitive root $\zeta_p$ has been chosen such that each $s_i$ induces this automorphism.  It it then shown as \cite[Proposition 3.2]{van1982galois} that for $0 \leq i \leq h$, there is a matrix in $\GL_2(K)$ representing the element $s_i \in \PGL_2(K)$ which is similar to $\begin{bsmallmatrix} \zeta_p^{n_i} & 0 \\ 0 & 1 \end{bsmallmatrix}$, where $n_i$ is an integer such that $m_i n_i \equiv 1$ (mod $p$) (with $m_i$ the exponent appearing in (\ref{eq split degenerate superelliptic}).

Each order-$p$ element $s_i \in \PGL_2(K)$ fixes exactly $2$ points of $\proj_K^1$, which we denote by $a_i$ and $b_i$.  One can show that we have $a_i, b_i \in \Omega$ for $0 \leq i \leq h$ (see for instance \cite[Corollary 4.3]{yelton2024branch2}).  Writing $S = \{a_0, b_0, \dots, a_h, b_h\} \subset \Omega$, it is easy to verify that the set-theoretic image of $S$ modulo the action of the group $\Gamma_0$ coincides with the set of branch points $\mathcal{B} \subset \proj_K^1 \cong \Omega / \Gamma_0$.  By \cite[Proposition 3.1(a)]{van1982galois}, for $0 \leq i \leq h$, each of the branch points $\alpha_i, \beta_i \in \mathcal{B}$ as defined in \S\ref{sec intro superelliptic} is respectively the image of the element $a_i, b_i \in \Omega$ after possibly permuting the indices and permuting $a_i, b_i$ for a fixed index $i$.

\subsection{Non-archimedean uniformization of Jacobians of split degenerate curves} \label{sec background jacobians}

The discussion in this and the next subsection applies to any split degenerate curve $C / K$, regardless of whether it is superelliptic.  In order to analytically construct the Jacobian of a split degenerate curve, it is necessary to define a certain kind of automorphic form on $\Omega$; proofs for the properties of these automorphic forms stated in the following discussion can be found in \cite[\S II.2.3]{gerritzen2006schottky}.

As in \S\ref{sec background Schottky}, let $\Gamma < \PGL_2(K)$ be the Schottky group used to uniformize the split degenerate curve $C$, and write $\Omega \subset \proj_K^1$ for the subset of limit points of the action of $\Gamma$.  For any choice of $a, b \in \Omega$ not both in the $\Gamma$-orbit of $\infty$, we define the \emph{theta function} $\Theta_{a, b}$ (with respect to $a, b \in \Omega$ and to $\Gamma < \PGL_2(K)$) as 
\begin{equation*}
\Theta_{a, b}(z) = \prod_{\gamma \in \Gamma} \frac{z - \gamma(a)}{z - \gamma(b)}.
\end{equation*}
We adopt the convention that if exactly one of the terms in the numerator (resp. denominator) is $\infty$, then the numerator (resp. denominator) is replaced by $1$ and that if the denominator comes out to $0$, then the infinite product equals $\infty \in \proj_K^1$.  It is easy to see that as long as we have $b \notin \Gamma a$, the set of zeros (resp. poles) of $\Theta_{a, b}$ coincides with $\{\gamma(a_i)\}_{\gamma \in \Gamma}$ (resp. $\{\gamma(b_i)\}_{\gamma \in \Gamma}$), whereas if on the other hand we have $b \in \Gamma a$, then the function $\Theta_{a, b}$ has no zeros or poles.

One can show that any theta function with respect to $\Gamma$ is well-defined and meromorphic on $\Omega$ and moreover that it has an automorphy property: for any $\gamma \in \Gamma$, there is an element $c_{a, b}(\gamma) \in K^\times$ such that we have 
\begin{equation} \label{eq automorphy}
\Theta_{a, b}(\gamma(z)) = c_{a, b}(\gamma)\Theta_{a, b}(z).
\end{equation}
(We note that the formula for the automorphy factor $c_{a, b} \in K^\times$ in most sources defines it as the multiplicative inverse of the factor we have defined above.)  It is clear that the mapping $\gamma \mapsto c_{a, b}(\gamma)$ respects group multiplication and so defines a homomorphism $c_{a, b} : \Gamma \to K^\times$.  Since the target group is abelian, in fact the homomorphism $c_{a, b}$ factors through the abelianization $\bar{\Gamma} \cong \zz^g$ of $\Gamma$; abusing notation slightly, we denote the resulting homomorphism again by $c_{a, b} : \bar{\Gamma} \to K^\times$.

Moreover, for any $\gamma \in \Gamma$, one can show that the function $c_{a, \gamma(a)}$ does not depend on the choice of an element $a \in \Omega$; we denote this function by $c_\gamma$.  It is clear that we have $c_{\gamma \gamma'} \equiv c_\gamma c_{\gamma'}$ for any $\gamma, \gamma' \in \Gamma$; therefore, the set $\Lambda := \{c_\gamma \ | \ \gamma \in \Gamma\}$ forms a group of homomorphisms from $\bar{\Gamma}$ to $K^\times$.  Below we will often abuse notation slightly by writing $c_\gamma$ for the homomorphism $\bar{\Gamma} \to K^\times$ defined as above with respect to an element of $\Gamma$ but in a context where $\gamma$ refers not to that element of $\Gamma$ but to its reduction in $\bar{\Gamma}$.

The full homomorphism group $\Hom(\bar{\Gamma}, K^\times)$ is isomorphic to $(K^\times)^g$ and so has the structure of a split analytic torus over $K$ which we denote by $T$ (see \cite[Corollary 2 to Theorem 4]{drinfeld1973periods} or \cite[\S VI.1]{gerritzen2006schottky}).  The subgroup $\Lambda < T$ defined above is a rank-$g$ lattice.  The quotient $T / \Lambda$ can be identified with (the analytification of) the Jacobian $J / K$ by defining an $K$-analytic Abel-Jacobi map as follows.  Any degree-$0$ $K$-divisor can be written as $\sum_{i = 1}^s ((P_i) - (Q_i))$ for some $s \geq 0$ and some (not necessarily distinct) points $P_i, Q_i \in C(\bar{K})$ such that the above formal sum is fixed by $\Gal(\bar{K} / K)$.  For $1 \leq i \leq s$, choose elements $p_i, q_i \in \Omega(\bar{K}) := \Omega \cup (\bar{K} \smallsetminus K)$ mapping respectively to $P_i, Q_i \in C(\bar{K}) \cong \Omega(\bar{K}) / \Gamma$ under reduction modulo the action of $\Gamma$.  Then we define the $K$-analytic Abel-Jacobi map 
\begin{equation} \label{eq Abel-Jacobi}
\mathrm{AJ} : \Div_0(C) \to T, \ \ \sum_{i = 1}^s ((P_i) - (Q_i)) \mapsto \prod_{i = 1}^s c_{p_i, q_i}.
\end{equation}
It is shown in \cite[\S3,4]{drinfeld1973periods} and \cite[\S VI.1]{gerritzen2006schottky} that this map $\mathrm{AJ}$ is surjective (and sends $K$-divisors in $\Div_0(C)$ to $K$-points of $T$) and that the image of the subgroup $\Prin(C) < \Div_0(C)$ of principal divisors under $\mathrm{AJ}$ coincides with the lattice $\Lambda < T$.  In this way, the map $\mathrm{AJ}$ induces an isomorphism $\Pic_0(C) \stackrel{\sim}{\to} T / \Lambda$ and thus identifies the quotient $T / \Lambda$ with the Jacobian $J$ of $C$.

Although the above construction describes the Jacobian variety $J$ as a multiplicative group, we will use additive notation for points of $J$.

As in \S\ref{sec intro superelliptic}, we denote by $\zeta_p \in \End(J)$ the automorphism of $J$ induced from the automorphism of $C$ given by $(x, y) \mapsto (x, \zeta_p y)$.  If we retain our assumption from \S\ref{sec background Schottky} that $\zeta_p$ is chosen so that this automorphism of $C$ is induced by the element $s_i \in \Gamma_0$ for some (any) index $i$, then we may explicitly describe the automorphism $\zeta_p \in \End(J)$ as follows.

\begin{prop} \label{prop automorphism zeta_p}

For some (any) index $i$, the automorphism $\zeta_p \in \End(J)$ is the one induced by the automorphism of $T$ which acts on each homomorphism $\Gamma \to K^\times$ in $T(K)$ by composing it with the conjugation map $^{s_i}(\cdot) : \Gamma \to \Gamma$ given by $\gamma \mapsto s_i \gamma s_i^{-1}$.

\end{prop}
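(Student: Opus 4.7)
The plan is to trace the automorphism $\zeta_p$ of $C$ through the non-archimedean Abel-Jacobi uniformization (\ref{eq Abel-Jacobi}). By the construction recalled in \S\ref{sec background Schottky}, the automorphism $\zeta_p$ of $C \cong \Omega / \Gamma$ sends the $\Gamma$-orbit of $z \in \Omega$ to the $\Gamma$-orbit of $s_i(z)$ (for any $i$), so for any lifts $p, q \in \Omega(\bar{K})$ of points of $C(\bar{K})$, the divisor $(P) - (Q)$ is sent by $\zeta_p$ to a divisor with lifts $s_i(p), s_i(q)$. By the formula for $\mathrm{AJ}$, the induced endomorphism of $T$ therefore sends $c_{p,q} \in T$ to $c_{s_i(p), s_i(q)}$. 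Since $\mathrm{AJ}$ is surjective onto $T$, such elements $c_{p,q}$ generate $T$, so the proposition reduces to establishing the identity $c_{s_i(p), s_i(q)}(\gamma) = c_{p,q}(s_i^{-1} \gamma s_i)$ for all $\gamma \in \Gamma$ (after possibly exchanging $s_i$ with its inverse, both being order-$p$ representatives of $\Gamma_0 \smallsetminus \Gamma$).

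I would prove this identity by a direct theta-function calculation. Substitute $z = s_i(w)$ in the product defining $\Theta_{s_i(p), s_i(q)}(z)$ and reindex by $\gamma = s_i \gamma' s_i^{-1}$ (a bijection of $\Gamma$ since $s_i$ normalizes $\Gamma$). Each factor then takes the form $\frac{s_i(w) - s_i(\gamma'(p))}{s_i(w) - s_i(\gamma'(q))}$, which by the standard M\"{o}bius identity
\begin{equation*}
s_i(u) - s_i(v) = \frac{(\det s_i)(u - v)}{(Cu + D)(Cv + D)}, \quad \text{where } s_i(z) = \frac{Az + B}{Cz + D},
\end{equation*}
splits as $\frac{w - \gamma'(p)}{w - \gamma'(q)}$ times the $w$-independent correction $\frac{C \gamma'(q) + D}{C \gamma'(p) + D}$. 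The $w$-dependent pieces reassemble into $\Theta_{p,q}(w)$ exactly, while the $w$-independent corrections combine into some nonzero constant $\kappa \in K^\times$, giving
\begin{equation*}
\Theta_{s_i(p), s_i(q)}(s_i(w)) = \kappa \cdot \Theta_{p,q}(w).
\end{equation*}

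To extract the desired relation between automorphy factors, apply an arbitrary $\gamma \in \Gamma$ to $w$ in the above identity. The right-hand side picks up the factor $c_{p,q}(\gamma)$ by (\ref{eq automorphy}). For the left-hand side, rewrite $s_i \gamma(w) = (s_i \gamma s_i^{-1})(s_i(w))$, use that $s_i \gamma s_i^{-1} \in \Gamma$, and apply the automorphy of $\Theta_{s_i(p), s_i(q)}$ to get the factor $c_{s_i(p), s_i(q)}(s_i \gamma s_i^{-1})$. Equating the two expressions and canceling the common factor $\kappa \Theta_{p,q}(w)$ yields $c_{s_i(p), s_i(q)}(s_i \gamma s_i^{-1}) = c_{p,q}(\gamma)$, which is the identity in question.

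The main obstacle will be handling the infinite product defining the theta function carefully: one must justify that the $w$-independent M\"{o}bius correction factors genuinely aggregate into a well-defined nonzero constant rather than disrupting convergence of the theta product, and edge cases where $p$ or $q$ lies in $\Gamma \cdot \infty$ require separate treatment under the conventions attached to (\ref{eq automorphy}). A minor bookkeeping point, already flagged, is fixing which of $s_i$ or $s_i^{-1}$ is the order-$p$ element actually inducing $(x, y) \mapsto (x, \zeta_p y)$ (rather than its inverse), which controls the direction of conjugation appearing in the final statement.
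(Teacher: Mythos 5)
Your argument follows the same overall strategy as the paper — reduce to elements $c_{a,b} \in T$ via the Abel–Jacobi map, then use the automorphy of theta functions under $s_i$ — and your theta-function manipulation is correct; in fact you essentially re-derive \Cref{lemma automorphy van steen}, which the paper's proof simply cites. But your starting point is off by an inverse, and it propagates to the wrong conclusion.

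You write that ``the divisor $(P)-(Q)$ is sent by $\zeta_p$ to a divisor with lifts $s_i(p), s_i(q)$,'' i.e.\ you define the action of $\zeta_p$ on $J$ by \emph{pushforward} of divisors, so $c_{p,q} \mapsto c_{s_i(p), s_i(q)}$. The paper instead defines $\zeta_p \in \End(J)$ via \emph{pullback} (this is the contravariant functoriality of $\Pic_0$): $(P)-(Q) \mapsto (\zeta_p^{-1}(P)) - (\zeta_p^{-1}(Q))$, hence $c_{a,b} \mapsto c_{s_i^{-1}(a), s_i^{-1}(b)}$. Your identity $c_{s_i(p), s_i(q)}(s_i\gamma s_i^{-1}) = c_{p,q}(\gamma)$ is itself correct, but combined with the pushforward convention it yields $\zeta_p(\chi) = \chi \circ ({}^{s_i^{-1}}(\cdot))$, i.e.\ conjugation by $s_i^{-1}$ rather than $s_i$ — the inverse of what the proposition asserts. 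You flagged a possible sign ambiguity, but attributed it to ``which of $s_i$ or $s_i^{-1}$ is the order-$p$ element inducing $(x,y) \mapsto (x,\zeta_p y)$''; that is not the source of the discrepancy, since \S\ref{sec background Schottky} fixes $s_i$ itself as inducing this automorphism of $C$. The actual issue is the pullback/pushforward choice in defining the induced endomorphism of the Jacobian, and the paper commits to pullback. Replacing $s_i$ by $s_i^{-1}$ in your first step (so $c_{p,q} \mapsto c_{s_i^{-1}(p), s_i^{-1}(q)}$) and re-running your theta-function computation recovers exactly the paper's argument and the stated conjugation direction.
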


\begin{proof}

Since the group $J(K)$ is generated by equivalence classes of divisors of the form $(P) - (Q) \in \Div_0(C)$, it follows from the above construction of the Jacobian that it suffices to verify the claimed description of the action of $\zeta_p \in \End(J)$ on images modulo $\Lambda$ of elements of $T$ of the form $c_{a, b}$ for $a, b \in \Omega$.  By considering the pullback of the divisor $(P) - (Q)$ via the automorphism $(x, y) \mapsto (x, \zeta_p y)$ of $C$, the automorphism $\zeta_p$ sends $c_{a, b}$ to $c_{s_i^{-1}(a), s_i^{-1}(b)}$.

Now, using \Cref{lemma automorphy van steen} below to get a constant $c \in K^\times$, we get 
\begin{equation}
c_{s_i^{-1}(a), s_i^{-1}(b)}(\gamma) \equiv \frac{\Theta_{s_i^{-1}(a), s_i^{-1}(b)}(\gamma(z))}{\Theta_{s_i^{-1}(a), s_i^{-1}(b)}(z)} \equiv \frac{c^{-1}\Theta_{a, b}([s_i \gamma](z))}{c^{-1}\Theta_{a, b}(s_i(z))} \equiv \frac{\Theta_{a, b}([s_i \gamma s_i^{-1}](s_i(z)))}{\Theta_{a, b}(s_i(z))} \equiv c_{a, b}(s_i \gamma s_i^{-1}).
\end{equation}

\end{proof}

\subsection{Duals and Weil pairings for split toric Jacobians} \label{sec background pairings}

Let $T^\vee$ denote the group $\Hom(\Lambda, \ggr_m)$ of characters of the lattice $\Lambda$, and let $\Lambda^\vee$ denote the group $\Hom(T, \ggr_m)$ of characters of the torus $T$, where \emph{characters} are understood to be morphisms of group varieties; we may view both $T^\vee$ and $\Lambda^\vee$ as rigid analytic group varieties over $K$.  There is an obvious map $\Lambda^\vee \to T^\vee$ given by restriction of characters to $\Lambda \subset T$.  One can see, as in the discussion in \cite[\S2]{bosch1991degenerating}, that this realizes $\Lambda^\vee$ as a lattice in $T^\vee$ of maximal rank, and it is shown as \cite[Theorem 2.1]{bosch1991degenerating} that the resulting quotient $T^\vee / \Lambda^\vee$ may be identified with the dual $J^\vee$ of the abelian variety $J$.

\begin{rmk} \label{rmk Lambda dual is barGamma}

Let $L / K$ be an algebraic extension.  Any element $\gamma \in \bar{\Gamma}$ induces an element $\ev_{\gamma} \in \Lambda^\vee(L) = \Hom(T_L, \ggr_{m, L})$ mapping a point $\mu \in T(L)$, viewed as a homomorphism $\chi_\mu : \bar{\Gamma} \to L^\times$, to its evaluation $\chi_\mu(\gamma) \in L^\times$.  One checks that the mapping $\gamma \mapsto \ev_{\gamma}$ is injective and surjective and thus allows us to identify the lattice $\Lambda^\vee_L$ with the free $\zz$-module $\bar{\Gamma}$, an identification that we will make freely below.

\end{rmk}

For any algebraic extension $L / K$ and an $L$-point $\mu$ of $T^\vee$ (resp. ($K$-)point of $\Lambda^\vee$), we write $\chi_\mu : \Lambda \to L^\times$ (resp. $\chi_\mu : T \to K^\times$) for the corresponding character.  Thanks to \Cref{rmk Lambda dual is barGamma}, each $L$-point $\lambda$ of $T$ (resp. ($K$-)point of $\Lambda$) can similarly be viewed as a character on $\Lambda^\vee$ (resp. $T^\vee$) taking values in $L^\times$ (resp. $K^\times$), and we write $\chi_\lambda$ for this character.  (Note that we have $\chi_\lambda(\mu) = \chi_\mu(\lambda)$ for any $\lambda \in \Lambda$ and $\mu \in \Lambda^\vee$.)

Let $m \geq 1$ be an integer.  By applying the functor $\Hom(\zz / m\zz, \cdot)$ to the short exact sequence 
\begin{equation}
1 \to \Lambda \to T \to J \to 1
\end{equation}
 and observing that $\Lambda$, being a free $\zz$-module, has trivial torsion, we get an injection $T[m] \hookrightarrow J[m]$.  Similarly, we get an injection $T^\vee[m] \hookrightarrow J^\vee[m]$.  We write $\bar{\mathbb{T}}_m$ and $\bar{\mathbb{T}}^\vee_m$ for the images of $T[m]$ and $T^\vee[m]$ under these respective injections, making them respectively rank-$g$ $\zz/m\zz$-submodules of the rank-$2g$ $\zz/m\zz$-modules $J[m]$ and $J^\vee[m]$.  Now letting $m = \ell^n$ be a power of a prime $\ell$ and taking inverse limits gives us injections of free $\zz_\ell$-modules 
\begin{equation} \label{eq T_ell}
 \mathbb{T}_\ell := T_\ell(T) := \lim_{\leftarrow n} T[\ell^n] \hookrightarrow T_\ell(J), \ \ \ \ \mathbb{T}_\ell^\vee := T_\ell(T^\vee) := \lim_{\leftarrow n} T^\vee[\ell^n] \hookrightarrow T_\ell(J^\vee).
\end{equation}

\begin{rmk} \label{rmk submodule fixed by inertia}

Suppose that $m$ is prime to the residue characteristic of $K$.  Then since all $m$-power roots of unity in $\bar{K}$ are fixed by the inertia subgroup $I_K$, the submodule $\bar{\mathbb{T}}_m \subset J[m]$ is contained in the submodule $J[m]^{I_K} \subset J[m]$ fixed by the action of inertia.  On taking inverse limits, it follows that we have the similar inclusion $\mathbb{T}_\ell \subseteq T_\ell(J)^{I_K}$ for any $\ell$ different from the residue characteristic.

One can show from the fact that $J$ has split toric reduction, using Grothendieck's Orthogonality Theorem (\cite[Th\'{e}or\`{e}me 2.4]{grothendieck1972modeles}), that the ranks of $T_\ell(J)^{I_K}$ and $T_\ell(J^\vee)^{I_K}$ must both equal $g$, using the following argument.  The fact that the abelian variety $J$ is split toric implies that its dual abelian variety $J^\vee$ is as well.  Grothendieck's Orthogonality Theorem states that the submodule $T_\ell(J)^{\mathrm{t}} \subset T_\ell(J)^{I_K}$ mapping to the toric part of the identity component of the special fiber of the N\'{e}ron model of $J$ and the submodule $T_\ell(J^\vee)^{I_K} \subset T_\ell(J^\vee)$ are orthogonal complements to each other with respect to the Weil pairing $\mathfrak{e}_\ell : T_\ell(J) \times T_\ell(J^\vee) \to T_\ell(\bar{K}^\times)$.  As the rank of $T_\ell(J)^{\mathrm{t}}$ is $g$, the rank of its orthogonal complement $T_\ell(J^\vee)^{I_K}$ is $2g - g = g$, and by a symmetric argument, the rank of $T_\ell(J)^{I_K}$ is $g$ as well.  We may therefore make the identifications $\mathbb{T}_\ell = T_\ell(J)^{I_K} = T_\ell(J)^{\mathrm{t}}$ and $\mathbb{T}_\ell^\vee = T_\ell(J^\vee)^{I_n} = T_\ell(J^\vee)^{\mathrm{t}}$.  We then have that $T_\ell(T)$ and $T_\ell(T^\vee)$ are orthogonal complements to each other under the pairing $\mathfrak{e}_\ell$; this is implicit in \Cref{prop explicit Weil pairing} below, which generalizes this fact to the case that $\ell$ is the residue characteristic.

\end{rmk}

For any integer $m \geq 1$, there is a homomorphism $\bar{\phi}_m: J[m] \to \Lambda \otimes \zz/m\zz$ given by lifting an element $z \in J[m]$ to a point $\tilde{z} \in T_{\bar{K}}$ and mapping it to the reduction of $\tilde{z}^m \in \Lambda$ modulo $m$th powers of elements of $\Lambda$.  This map $\bar{\phi}_m$ is clearly well defined, and its kernel coincides with $T[m] \subset J[m]$.  Meanwhile, we may make the identifications $T^\vee[m] = \Hom(\Lambda, \boldsymbol{\mu}_m) = \Hom(\Lambda \otimes \zz/m\zz, \boldsymbol{\mu}_m)$.  This allows us to define a pairing 
\begin{equation} \label{eq explicit Weil pairing mod m}
J[m] \times T^\vee[m] \to \boldsymbol{\mu}_m, \ \ (z, \xi) \mapsto (\chi_\xi \circ \bar{\phi}_m)(z),
\end{equation}
Now, letting $m = \ell^n$ and taking inverse limits, we get a homomorphism $\phi_\ell : T_\ell(J) \to \Lambda \otimes \zz_\ell$ whose kernel coincides with $\mathbb{T}_\ell$, an identification $\mathbb{T}_\ell = T_\ell(T^\vee) = \Hom(\Lambda \otimes \zz_\ell, T_\ell(\bar{K}^\times))$, and the pairing 
\begin{equation} \label{eq explicit Weil pairing}
T_\ell(J) \times T_\ell(T^\vee) \to T_\ell(\bar{K}^\times), \ \ (z, \xi) \mapsto (\chi_\xi \circ \phi_\ell)(z).
\end{equation}

The fact that the pairings in (\ref{eq explicit Weil pairing mod m}) and (\ref{eq explicit Weil pairing}) are respectively explicit formulations of certain restrictions of the Weil pairings $\bar{\mathfrak{e}}_m$ and $\mathfrak{e}_\ell$ is more or less asserted in \cite[Proof of Theorem 2.1]{coleman2000monodromy} (as well as in \cite[Proof of Theorem 5.8]{papikian2013non}) without justification provided; for the sake of completeness, it is given as \Cref{cor explicit Weil pairing} below.  The following proposition provides a formula for the mod-$m$ Weil pairing on all of $J[m] \times J^\vee[m]$.

\begin{prop} \label{prop explicit Weil pairing}

Given an integer $m \geq 1$, the mod-$m$ Weil pairing $\bar{\mathfrak{e}}_m : J[m] \times J^\vee[m] \to \boldsymbol{\mu}_m$ is given by the following formula.  Given an element $z \in J[m]$ (resp. $\xi \in J^\vee[m]$), choose a lifting $\tilde{z} \in T(\bar{K})$ (resp. $\tilde{\xi} \in T^\vee(\bar{K}) = \Hom(\Lambda, \bar{K}^\times)$) and note that we have $\tilde{z}^m \in \Lambda$ (resp. $\tilde{\xi}^m \in \Lambda^\vee = \Hom(T, K^\times)$).  Then we have 
\begin{equation} \label{eq Weil pairing formula}
\bar{\mathfrak{e}}_m(z, \xi) = \frac{\chi_{\tilde{\xi}}(\tilde{z}^m)}{\chi_{\tilde{\xi}^m}(\tilde{z})}.
\end{equation}

\end{prop}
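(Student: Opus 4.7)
My plan is to verify the formula in three main stages: well-definedness and $\boldsymbol{\mu}_m$-valuedness, compatibility with the known formulations of the Weil pairing on the toric subgroups, and identification with $\bar{\mathfrak{e}}_m$ on arbitrary pairs via Cartier duality.

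For well-definedness, the crucial observation is that since $\tilde{\xi}^m$ lies in $\Lambda^\vee = \Hom(T, K^\times)$, the character $\chi_{\tilde{\xi}^m}$ of $T$ restricts on $\Lambda$ to $(\chi_{\tilde{\xi}})^m$. Using this, replacing $\tilde{z}$ by $\tilde{z}\lambda$ for $\lambda \in \Lambda$ multiplies both the numerator $\chi_{\tilde{\xi}}(\tilde{z}^m)$ and the denominator $\chi_{\tilde{\xi}^m}(\tilde{z})$ by the same factor $\chi_{\tilde{\xi}}(\lambda)^m$; a symmetric computation handles changes of $\tilde{\xi}$ by elements of $\Lambda^\vee$. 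Taking the $m$-th power of the ratio yields $\chi_{\tilde{\xi}^m}(\tilde{z}^m)/\chi_{\tilde{\xi}^m}(\tilde{z}^m) = 1$, placing the value in $\boldsymbol{\mu}_m$. Bilinearity and Galois-equivariance of the resulting pairing follow immediately from the multiplicativity of character evaluation together with the Galois-equivariance of the uniformization data.

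Next, I would verify that the formula restricts correctly on the toric subgroups. When $\xi \in \bar{\mathbb{T}}_m^\vee$, one may choose $\tilde{\xi}$ with $\tilde{\xi}^m = 1$, whereupon the formula reduces to $\chi_{\tilde{\xi}}(\tilde{z}^m) = (\chi_\xi \circ \bar{\phi}_m)(z)$, recovering~\eqref{eq explicit Weil pairing mod m}; symmetrically, on $\bar{\mathbb{T}}_m \times J^\vee[m]$ the formula becomes $\chi_{\tilde{\xi}^m}(\tilde{z})^{-1}$, the analogous dual Cartier evaluation. In particular, $\bar{\mathbb{T}}_m$ and $\bar{\mathbb{T}}_m^\vee$ are mutually orthogonal under the right-hand side.

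To conclude the identification with $\bar{\mathfrak{e}}_m$ on arbitrary pairs $(z, \xi)$, I would invoke the Cartier-duality characterization of the Weil pairing: the extensions of finite flat group schemes
\begin{equation*}
0 \to T[m] \to J[m] \to \Lambda/m\Lambda \to 0 \quad \text{and} \quad 0 \to T^\vee[m] \to J^\vee[m] \to \Lambda^\vee/m\Lambda^\vee \to 0,
\end{equation*}
with connecting homomorphisms $z \mapsto \tilde{z}^m$ and $\xi \mapsto \tilde{\xi}^m$, are Cartier dual to each other, and $\bar{\mathfrak{e}}_m$ coincides with the induced evaluation pairing under the identification $J^\vee[m] = \Hom(J[m], \boldsymbol{\mu}_m)$. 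A direct computation of this Cartier pairing, using the subgroup and quotient evaluations already established, produces precisely $\chi_{\tilde{\xi}}(\tilde{z}^m)/\chi_{\tilde{\xi}^m}(\tilde{z})$. The main obstacle is this final bookkeeping: showing that the subgroup and quotient contributions combine into the claimed ratio rather than some other composition, and verifying that the sign conventions work out as stated.
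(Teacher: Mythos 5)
Your first two stages --- verifying well-definedness, $\boldsymbol{\mu}_m$-valuedness, bilinearity, and checking the formula on the toric subgroups --- are correct and carried out cleanly. The key identity $\chi_{\tilde{\xi}^m}|_\Lambda = (\chi_{\tilde{\xi}}|_\Lambda)^m$ that you exploit for well-definedness is exactly right, and your recovery of \eqref{eq explicit Weil pairing mod m} on $J[m] \times \bar{\mathbb{T}}^\vee_m$ matches the paper.

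However, the third stage, where you invoke Cartier duality, contains a genuine gap, and it is precisely where the content of the proposition lives. Your plan is to ``directly compute'' the Cartier evaluation pairing from ``the subgroup and quotient evaluations already established.'' But these do not determine the pairing: any two pairings $J[m] \times J^\vee[m] \to \boldsymbol{\mu}_m$ that agree on $\bar{\mathbb{T}}_m \times J^\vee[m]$ and on $J[m] \times \bar{\mathbb{T}}^\vee_m$ differ by a pairing factoring through $(\Lambda/m\Lambda) \times (\Lambda^\vee/m\Lambda^\vee)$, and this ambiguity is nontrivial. Turning your approach into a proof requires actually computing, in the rigid-analytic uniformization, how an element of $J^\vee[m]$ acts as a character of $J[m]$. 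The paper's proof supplies exactly this: via Bosch--L\"{u}tkebohmert's description of $J^\vee = T^\vee/\Lambda^\vee$, a point $\xi \in J^\vee[m]$ is identified with a $\Lambda$-twist $\mathcal{L}$ of the trivial bundle on $T$; the pullback $[m]^*\mathcal{L}$ is trivialized via $\chi_{\tilde{\xi}^m}$; and quotienting back down by $J[m]$ recovers $\mathcal{L}$ with the translation action of $z \in J[m]$ read off as multiplication by $\chi_{\tilde{\xi}}(\tilde{z}^m)\chi_{\tilde{\xi}^{-m}}(\tilde{z})$, which equals $\bar{\mathfrak{e}}_m(z, \xi)$ by Mumford's construction in \cite[\S20]{mumford1974abelian}. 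You anticipate this step under the name ``final bookkeeping'' and correctly flag it as the main obstacle, but without carrying it out the argument is incomplete: the claim that the Cartier evaluation pairing equals the stated ratio is essentially equivalent to the proposition itself, so asserting it is not a shortcut.
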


\begin{proof}
In \cite[Proof of Theorem 2.1]{bosch1991degenerating}, in which the dual $J^\vee$ of the split toric abelian variety $J$ is constructed as $T^\vee / \Lambda^\vee$, any $L$-point $\xi \in J^\vee(L)$ (where $L / K$ is an algebraic extension) is identified with the line bundle $\mathcal{L}$ on $J$ given by the trivial line bundle $\tilde{\mathcal{L}} := T \times \aff^1$ of the torus $T$ modulo the action of $\Lambda$ given by 
\begin{equation} \label{eq action of Lambda for L}
\lambda: (\tilde{w}, a) \mapsto (\lambda \tilde{w}, \chi_{\tilde{\xi}}(\lambda) a), \ \ \ \ \lambda \in \Lambda, \ \tilde{w} \in T, \ a \in \aff^1,
\end{equation}
where $\tilde{\xi} \in T^\vee(L) = \Hom(\Lambda, L^\times)$ is a lifting of $\xi$ (one checks that choosing a different lifting $\tilde{\xi}'$ of $\xi$ leads to an isomorphic line bundle on $J$ using the fact that the quotient $\tilde{\xi}' / \tilde{\xi} \in \Lambda^\vee = \Hom(T, \ggr_m)$ yields the global section $\chi_{\tilde{\xi}' / \tilde{\xi}}$ of $T$).  Let $[m]$ denote the $m$th-power endomorphism on $T$ or on $J$.  We construct the pullback line bundle $[m]^*\mathcal{L}$ on $J$ by pulling back the trivial bundle $\tilde{\mathcal{L}} = T \times \aff^1$ via the map $[m] : T \to T$ and defining an action of $\Lambda$ on $[m]^*\tilde{\mathcal{L}}$ which is compatible under the $m$th power map $[m]$ with the action of $\Lambda$ on $\tilde{\mathcal{L}}$ defined above.  One checks that such an action has to be given by 
\begin{equation} \label{eq action of Lambda for pullback}
\lambda : (\tilde{w}, a) \mapsto (\lambda^m \tilde{w}, \chi_{\tilde{\xi}}(\lambda^m) a) = (\lambda^m \tilde{w}, \chi_{\tilde{\xi}}^m(\lambda) a), \ \ \ \ \lambda \in \Lambda, \ \tilde{w} \in T, \ a \in \aff^1;
\end{equation}
 indeed, this is the only action compatible with the pullback of the translation action of $\Lambda$ on $T$ via the map $[m]$.  The line bundle $[m]^*\mathcal{L}$ is then given by $T \times \aff^1$ modulo the action given by (\ref{eq action of Lambda for pullback}).

Now suppose that $\xi$ is $m$-torsion.  Then we have $\tilde{\xi}^m \in \Lambda^\vee = \Hom(T, K^\times)$, and the line bundle $[m]^*\mathcal{L}$ is trivial.  More precisely, there is an isomorphism of line bundles $J \times \aff^1 \stackrel{\sim}{\to} [m]^*\mathcal{L}$ induced by the isomorphism 
\begin{equation} \label{eq trivialization of pullback}
T \times \aff^1 \stackrel{\sim}{\to} [m]^*\tilde{\mathcal{L}}, \ (\tilde{w}, a) \mapsto (\tilde{w}, \chi_{\tilde{\xi}^m}(\tilde{w}) a)
\end{equation}
 and by carrying over the action in (\ref{eq action of Lambda for pullback}) of $\Lambda$ on $[m]^*\tilde{\mathcal{L}}$ via (the inverse of) this isomorphism to the action of $\Lambda$ on $T \times \aff^1$ given by 
\begin{equation} \label{eq action of Lambda trivial}
\lambda : (\tilde{w}, a) \mapsto (\lambda^m \tilde{w}, a), \ \ \ \ \lambda \in \Lambda, \ \tilde{w} \in T, \ a \in \aff^1.
\end{equation}
The situation is summarized in the below diagram (the arrows $\rightsquigarrow$ point to quotients by an action).

\begin{equation}
\begin{tikzcd} [column sep = {.05in, 1em}]
\Lambda \arrow[loop right, distance = .25in, "(\ref{eq action of Lambda trivial})", pos=.15] \arrow[d, equal] & T \times \aff^1 \arrow[rr, rightsquigarrow] \arrow[d, "(\ref{eq trivialization of pullback})"] &  & J \times \aff^1 \arrow[d, "\wr"] \\
\Lambda \arrow[loop right, distance = .25in, "(\ref{eq action of Lambda for pullback})", pos=.15] \arrow[d, "{[}m{]}"] & T \times \aff^1 \arrow[rr, rightsquigarrow] \arrow[d, "({[}m{]}{,}\,\text{id})"] &  & \left[m\right]^*\mathcal{L} \\
\Lambda \arrow[loop right, distance = .25in, "(\ref{eq action of Lambda for L})", pos=.15] & T \times \aff^1 \arrow[rr, rightsquigarrow] &  & \mathcal{L}
\end{tikzcd}
\end{equation}

Now one can readily verify that the line bundle $\mathcal{L}$ can be constructed directly from its pullback $[m]^*\mathcal{L}$ as the quotient of $[m]^*\mathcal{L}$ by the action of $J[m]$ given by $z : (w, a) \mapsto (w + z, \chi_{\tilde{\xi}}(\tilde{z}^m)a)$ for any $\tilde{z} \in T$ reducing to a point $z \in J[m]$ (and any $w \in J$ and $a \in \aff^1$): indeed, this gives us the quotient of $T \times \aff^1$ by the action of $[m]^{-1}(\Lambda)$ given by $\tilde{z} : (w, a) \mapsto (\tilde{z}^m w, \chi_{\tilde{\xi}}(\tilde{z}^m)a)$ for any $\tilde{z} \in [m]^{-1}(\Lambda) \subset T$, which is isomorphic to $\mathcal{L}$ as constructed using (\ref{eq action of Lambda for L}) via applying $[m]$.  Now, using the trivialization of $[m]^*\mathcal{L}$ given by (\ref{eq trivialization of pullback}), we may view $\mathcal{L}$ as the quotient of the trivial bundle $J \times \aff^1$ by 
\begin{equation}
z : (w, a) \mapsto (w + z, \chi_{\tilde{\xi}}(\tilde{z}^m) \chi_{\tilde{\xi}^{-m}}(\tilde{z}) a), \ \ \ \ z \in J[m], \ w \in J, \ a \in \aff^1.
\end{equation}
Now according to the construction in \cite[\S20]{mumford1974abelian}, the character $z \mapsto \chi_{\tilde{\xi}}(\tilde{z}^m) \chi_{\tilde{\xi}^{-m}}(\tilde{z})$ equals $\bar{\mathfrak{e}}_m(\cdot, \xi)$, thus proving the claimed formula.
\end{proof}

\begin{cor} \label{cor explicit Weil pairing}

The mod-$m$ Weil pairing $\bar{\mathfrak{e}}_m : J[m] \times J^\vee[m] \to \boldsymbol{\mu}_m$, restricted to $T^\vee[m] = \bar{\mathbb{T}}_m^\vee \subset J^\vee[m]$ in the second argument, is the same as the pairing in (\ref{eq explicit Weil pairing mod m}).

The $\ell$-adic Weil pairing $\mathfrak{e}_\ell$, restricted to $T_\ell(T^\vee) = \mathbb{T}_\ell^\vee \subset T_\ell(J^\vee)$ in the second argument, is the same as the pairing in (\ref{eq explicit Weil pairing}).

\end{cor}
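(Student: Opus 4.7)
The plan is to deduce both statements of \Cref{cor explicit Weil pairing} directly from the formula (\ref{eq Weil pairing formula}) of \Cref{prop explicit Weil pairing} by making a particularly convenient choice of the lift $\tilde{\xi}$ whenever $\xi$ lies in the subgroup $\bar{\mathbb{T}}_m^\vee = T^\vee[m] \subset J^\vee[m]$. Indeed, since the map $T^\vee[m] \hookrightarrow J^\vee[m]$ comes from applying $\Hom(\zz/m\zz, \cdot)$ to the short exact sequence $1 \to \Lambda^\vee \to T^\vee \to J^\vee \to 1$ (and $\Lambda^\vee$ is torsion-free as a lattice), each $\xi \in T^\vee[m]$ is naturally an element of $T^\vee(\bar{K}) = \Hom(\Lambda, \bar{K}^\times)$. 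We may therefore simply take $\tilde{\xi} = \xi$.

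With this choice, the key observation is that $\tilde{\xi}^m$ is the trivial character in $\Lambda^\vee = \Hom(T, \ggr_m)$, since $\xi$ is already killed by $m$ inside $T^\vee$ (not merely in $J^\vee$). Consequently $\chi_{\tilde{\xi}^m}(\tilde{z}) = 1$ for any lift $\tilde{z} \in T(\bar{K})$ of $z$, and the formula (\ref{eq Weil pairing formula}) collapses to
\begin{equation*}
\bar{\mathfrak{e}}_m(z, \xi) \ = \ \chi_{\tilde{\xi}}(\tilde{z}^m) \ = \ \chi_\xi(\tilde{z}^m).
\end{equation*}
By definition, $\tilde{z}^m \in \Lambda$ is a representative of $\bar{\phi}_m(z) \in \Lambda \otimes \zz/m\zz$, and the character $\chi_\xi : \Lambda \to \boldsymbol{\mu}_m$ factors through $\Lambda \otimes \zz/m\zz$, so this value equals $(\chi_\xi \circ \bar{\phi}_m)(z)$, recovering the pairing of (\ref{eq explicit Weil pairing mod m}). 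This proves the first (mod-$m$) assertion.

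For the $\ell$-adic assertion, the plan is to take the inverse limit of the mod-$\ell^n$ identity over $n \geq 1$. Both $\mathfrak{e}_\ell$ and the pairing in (\ref{eq explicit Weil pairing}) are defined as the inverse limits of the compatible systems of pairings on $\{J[\ell^n]\}$ and $\{T^\vee[\ell^n]\}$ with values in $\{\boldsymbol{\mu}_{\ell^n}\}$, and the identifications $\mathbb{T}_\ell^\vee = \lim_{\leftarrow n} T^\vee[\ell^n]$ and $T_\ell(\bar{K}^\times) = \lim_{\leftarrow n} \boldsymbol{\mu}_{\ell^n}$ are built into (\ref{eq T_ell}). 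Passing to the limit yields the claimed equality on $T_\ell(J) \times T_\ell(T^\vee)$. There is no real obstacle here; the only substantive point is the observation that the lift $\tilde{\xi} = \xi$ is legitimate for $\xi \in T^\vee[m]$, which kills the denominator of (\ref{eq Weil pairing formula}) and reduces \Cref{prop explicit Weil pairing} to the simpler formula (\ref{eq explicit Weil pairing mod m}).
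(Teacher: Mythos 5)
Your proposal is correct and follows essentially the same route as the paper: choose an $m$-torsion lift $\tilde{\xi} \in T^\vee$ of $\xi \in \bar{\mathbb{T}}_m^\vee$ (you make the canonical choice $\tilde{\xi} = \xi$, which is what the paper intends), observe that $\tilde{\xi}^m$ is trivial so the denominator of (\ref{eq Weil pairing formula}) disappears, identify the numerator with $(\chi_\xi \circ \bar{\phi}_m)(z)$, and pass to inverse limits for the $\ell$-adic statement. Your write-up is somewhat more explicit than the paper's terse version, but there is no substantive difference in approach.
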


\begin{proof}
Choose elements $z \in J[m]$ and $\xi \in T^\vee[m] = \bar{\mathbb{T}}_m \subset J^\vee[m]$; choose a lifting $\tilde{z} \in T$; and choose a lifting $\tilde{\xi} \in T^\vee$ which is $m$-torsion.  The first statement now follows immediately from the formula in (\ref{eq Weil pairing formula}) given by \Cref{prop explicit Weil pairing} on observing that the character $\chi_{\xi^m}$ is trivial.  The second statement clearly follows from the first on considering how the $\ell$-adic pairing $\mathfrak{e}_\ell$ is derived via inverse limits from the mod-$\ell^n$ pairings $\bar{\mathfrak{e}}_{\ell^n}$.
\end{proof}

\section{Clusters and the Berkovich projective line} \label{sec berk}

Given the completion $\cc_K$ of an algebraic closure of $K$, we write $v : \cc_K \to \rr$ for an extension of the valuation $v : K^\times \to \zz$.  Below when we speak of a \emph{disc} $D \subset \cc_K$, we mean that $D$ is a closed disc with respect to the metric induced by $v : \cc_K \to \rr$; in other words, $D = \{z \in \cc_K \ | \ v(z - c) \geq r\}$ for some center $c \in \cc_K$ and for some real number $r \in \rr$ which is the \emph{(logarithmic) radius} of $D$.  Given a disc $D \subset \cc_K$, we denote its (logarithmic) radius by $d(D)$.

\subsection{The Berkovich projective line and related notation} \label{sec berk berk}

The \emph{Berkovich projective line} $\Berk$ over the complete algebraically closed field $\cc_K$ is a type of rigid analytification of the projective line $\proj_{\cc_K}^1$ and is typically defined in terms of multiplicative seminorms on $\cc_K[x]$ as in \cite[\S1]{baker2008introduction} and \cite[\S6.1]{benedetto2019dynamics}.  Points of $\Berk$ are identified with multiplicative seminorms which are each classified as Type I, II, III, or IV.  For the purposes of this paper, as in \cite{yelton2024branch}, we may safely ignore points of Type IV and need only adopt a fairly rudimentary construction which does not directly involve seminorms.

\begin{dfn} \label{dfn berk}

Define the \textit{Berkovich projective line}, denoted $\Berk$, to be the topological space with points and topology given as follows.  The points of $\Berk$ are identified with  
\begin{enumerate}[(i)]
\item $\cc_K$-points of $\proj_{\cc_K}^1$, which we will call \emph{points of Type I}; and 
\item discs $D \subset \cc_K$; if $d(D) \in \qq$ (resp. $d(D) \notin \qq$), we call this a \emph{point of Type II} (resp. a \emph{point of Type III}).
\end{enumerate}
Let $\Hyp \subset \Berk$ be the subset consisting of the points of Type II or III.

A point of $\Berk$ which is identified with a point $z \in \cc_K \cup \{\infty\}$ (resp. a disc $D \subset \cc_K$) is denoted $\eta_z \in \Berk$ (resp. $\eta_D \in \Berk$).  Given two points $\eta = \eta_D, \eta' = \eta_{D'} \in \Hyp$, we write $\eta \vee \eta' \in \Hyp$ for the point corresponding to the smallest disc containing both $D$ and $D'$.  We define an order relation (denoted by $>$) on $\Hyp$ by decreeing that $\eta > \eta'$ if and only if we have $\eta = \eta_D$ and $\eta' = \eta_{D'}$ for discs $D \supsetneq D'$.  Note that for any $\eta, \eta' \in \Hyp$, the point $\eta \vee \eta'$ satisfies $\eta \vee \eta' > \eta, \eta'$ and is minimal (with respect to the ``greater than" relation) for this property.

We define a metric on $\Hyp$ given by the distance function 
\begin{equation*}
\delta : \Hyp \times \Hyp \to \rr
\end{equation*}
 as follows.  For points $\eta = \eta_D, \eta' = \eta_{D'} \in \Hyp$ satisfying $\eta > \eta'$, we let 
\begin{equation}
\delta(\eta, \eta') = d(D) - d(D'), 
\end{equation}
and for general $\eta, \eta' \in \Hyp$, we let 
\begin{equation}
\delta(\eta, \eta') = \delta(\eta, \eta \vee \eta') + \delta(\eta', \eta \vee \eta').
\end{equation}

We endow the subset $\Hyp \subset \Berk$ with the topology induced by the metric given by $\delta$, and we extend this to a topology on all of $\Berk$ in such a way that, for any $w \in \cc_K$, the sequence $\{\eta_{(w), i}\}_{i = 1, 2, 3, \dots}$ (resp. $\{\eta_{(w), i}\}_{i = -1, -2, -3, \dots}$) converges to $\eta_w$ (resp. $\eta_\infty$), where $\eta_{(w), i}$ corresponds to the disc containing $w$ with logarithmic radius $i$ for all $i \in \zz$ -- see \cite[Definition 2.1, Remark 2.2]{yelton2024branch} for another (more detailed) formulation.

\end{dfn}

As is discussed in \cite[Remark 2.2]{yelton2024branch}, the space $\Berk$ is path-connected, and there is a unique non-backtracking path between any pair of points in $\Berk$.  This allow us to set the following notation.  Below we denote the image in $\Berk$ of the non-backtracking path between two points $\eta, \eta' \in \Berk$ by $[\eta, \eta'] \subset \Berk$, and we will often refer to this image itself as ``the path'' from $\eta$ to $\eta'$; note that with this notation we have $[\eta, \eta'] = [\eta', \eta]$.  The above observations imply that, given a point $\eta \in \Berk$ and a connected subspace $U \in \Berk$, there is a (unique) point $\xi \in U$ such that every path from $\eta$ to a point in $U$ contains $\xi$; we will often speak of ``the closest point in $U$ to $\eta$'' in referring to this point $\xi$.  In a similar way, if $U, U' \in \Berk$ are connected subspaces, we will speak of ``the closest point in $U$ to $U'$'' (and vice versa).  Given a point $\eta \in \Hyp$ and subspaces $U, U' \subset \Berk$ (which have nontrivial intersection with $\Hyp$), we write $\delta(\eta, U)$ (resp. $\delta(U, U')$) for the distance between $\eta$ and the closest point in $U'$ to $\eta$ (resp. between the closest point in $U$ to $U'$ and the closest point in $U'$ to $U$).

\begin{rmk} \label{rmk ordering}

One easily sees that the order relation $>$ defined above can be equivalently defined as follows.  For points $\eta, \eta' \in \Hyp$, we have $\eta > \eta'$ if $\eta$ lies in the interior of the path $[\eta', \eta_\infty] \subset \Berk$.  From this one sees that given points $\eta', \eta'' > \eta$, the points $\eta', \eta'' \in \Hyp$ are comparable under the ordering (that is, we have the trichotomy $\eta'' > \eta'$, $\eta'' < \eta'$, or $\eta'' = \eta'$).

\end{rmk}

Given any subset $A \subset \proj_K^1$, we write $\Sigma_A \subset \Berk$ for the convex hull of the subspace $\{\eta_z\}_{z \in A} \subset \Berk$, \textit{i.e.} the smallest connected subspace of $\Berk$ containing $\{\eta_z\}_{z \in A}$.

We recall the set-up given in \S\ref{sec background Schottky} involving the groups $\Gamma \lhd \Gamma_0 < \PGL_2(K)$, the subset $\Omega \subset \proj_K^1$ of non-limit points under the action of $\Gamma$ (or $\Gamma_0$), the generators $s_0, \dots, s_h$ of $\Gamma_0$, and the fixed points $a_0, b_0, \dots, a_h, b_h \in \Omega$; let us write $S = \{a_0, b_0, \dots, a_h, b_h\}$.  Recall that the image of $S$ modulo the action of $\Gamma_0$ coincides with the set $\mathcal{B}$ of branch points of our superelliptic curve with the points $a_i, b_i \in S$ mapping respectively to $\alpha_i, \beta_i \in \mathcal{B}$ for $0 \leq i \leq h$.  For $0 \leq i \leq h$, define $\Lambda_{(i)} = \Sigma_{\{a_i, b_i\}} \subset \Berk$ (that is, the subspace $\Lambda_{(i)} \subset \Hyp$ is the \emph{axis} connecting $z_{a_i}$ to $z_{b_i}$), and define $\bbLambda_{(i)}$ to be the tubular neighborhood given by 
\begin{equation}
\bbLambda_{(i)} = \{\eta \in \Hyp \ | \ \delta(\eta, \Lambda_{(i)}) \leq \tfrac{v(p)}{p-1}\}.
\end{equation}
We set analogous notation for the set $\mathcal{B}$ of branch points: for $0 \leq i \leq h$, define $\bar{\Lambda}_{(i)} = \Sigma_{\{\alpha_i, \beta_i\}} \subset \Berk$, and define $\bar{\bbLambda}_{(i)}$ to be the tubular neighborhood given by 
\begin{equation}
\bar{\bbLambda}_{(i)} = \{\eta \in \Hyp \ | \ \delta(\eta, \bar{\Lambda}_{(i)}) \leq \tfrac{pv(p)}{p-1}\}
\end{equation}
(note the difference in radius of this tubular neighborhood from that of $\bbLambda_{(i)}$.  We see from these definitions that when $p$ is not the residue characteristic of $K$, we have $\bbLambda_{(i)} = \Lambda_{(i)}$ and $\bar{\bbLambda}_{(i)} = \bar{\Lambda}_{(i)}$.)

\begin{thm} \label{thm disjointness of neighborhoods}

We have $\bbLambda_{(i)} \cap \bbLambda_{(j)} = \varnothing$ and $\bar{\bbLambda}_{(i)} \cap \bar{\bbLambda}_{(j)} = \varnothing$ for indices $i \neq j$.

\end{thm}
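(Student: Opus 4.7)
The plan is to identify each tubular neighborhood $\bbLambda_{(i)}$ with the set of Berkovich points in $\Hyp$ fixed by the order-$p$ element $s_i \in \Gamma_0$, and then to use discreteness of $\Gamma_0$ in $\PGL_2(K)$ together with compactness of the stabilizer of any point of $\Hyp$. After applying an element of $\PGL_2(K)$ to normalize so that $a_i = 0$ and $b_i = \infty$, the element $s_i$ acts on $\proj_{\cc_K}^1$ as $z \mapsto \zeta_p^{n_i} z$. For a disc $D \subset \cc_K$ of center $c$ and logarithmic radius $r$, the image $s_i(D)$ is the disc of center $\zeta_p^{n_i} c$ and the same radius; these two discs coincide iff $v(\zeta_p^{n_i} c - c) = v(c) + \tfrac{v(p)}{p-1} \geq r$. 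Rearranging yields $s_i(\eta_D) = \eta_D$ iff $\delta(\eta_D, \Lambda_{(i)}) \leq \tfrac{v(p)}{p-1}$, so $\bbLambda_{(i)}$ coincides with the full fixed locus of $s_i$ in $\Hyp$.

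For the first disjointness, I would suppose for contradiction that $\eta \in \bbLambda_{(i)} \cap \bbLambda_{(j)}$ for some $i \neq j$. Then the subgroup $H := \langle s_i, s_j \rangle \leq \Gamma_0$ stabilizes $\eta$. Since $\Gamma_0$ is the free product of $h+1$ copies of $\zz/p\zz$ (generated by $s_0, \dots, s_h$ with only the relations $s_k^p = 1$), the subgroup $H$ is isomorphic to $\zz/p\zz * \zz/p\zz$, hence infinite. On the other hand, the stabilizer in $\PGL_2(\cc_K)$ of any point of $\Hyp$ is compact (being conjugate to $\PGL_2(\mathcal{O}_{\cc_K})$ or to a closed subgroup thereof), and $\Gamma_0$ is discrete in $\PGL_2(K)$ (combining the Schottky property of $\Gamma$ from \S\ref{sec background Schottky} with $[\Gamma_0 : \Gamma] = p$). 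A discrete subgroup of a compact group is finite, so $H$ would be finite, a contradiction.

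For the second disjointness, I would transfer the first statement through the analytic quotient map $\pi : \Omega \to \Omega/\Gamma_0 \cong \proj_{\cc_K}^1$. Using the local form $z \mapsto z^p$ of $\pi$ in the normalized coordinates around a fixed point of $s_i$, one shows (by the same computation as in the first step, applied to the cover rather than to $s_i$ itself) that $\pi$ restricted to $\bbLambda_{(i)}$ is injective onto its image and scales distances transverse to $\Lambda_{(i)}$ by the factor $p$. Hence $\pi(\bbLambda_{(i)}) = \bar{\bbLambda}_{(i)}$, and the full preimage of $\bar{\bbLambda}_{(i)}$ equals $\bigcup_{\gamma \in \Gamma_0} \gamma \bbLambda_{(i)}$. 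Given a hypothetical $\bar\eta \in \bar{\bbLambda}_{(i)} \cap \bar{\bbLambda}_{(j)}$, any lift $\eta$ then lies in both $\gamma \bbLambda_{(i)}$ and $\gamma' \bbLambda_{(j)}$ for some $\gamma, \gamma' \in \Gamma_0$, so $\eta$ is fixed by the two distinct order-$p$ elements $\gamma s_i \gamma^{-1}$ and $\gamma' s_j (\gamma')^{-1}$; these generate an infinite subgroup of $\Gamma_0$ (since in a free product of cyclic groups, two conjugates of generators in different free factors never coincide and always generate a copy of $\zz/p\zz * \zz/p\zz$), and the same discreteness-vs.-compactness argument produces a contradiction.

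The main obstacle is the metric computation underpinning the third step: verifying that the local ramification of $z \mapsto z^p$ along the fixed locus of $s_i$ really does multiply the transverse radius by exactly $p$, so that the tubular neighborhood of radius $\tfrac{v(p)}{p-1}$ in $\bbLambda_{(i)}$ maps bijectively onto the tubular neighborhood of radius $\tfrac{pv(p)}{p-1}$ in $\bar{\bbLambda}_{(i)}$. A secondary technicality is the normal-form argument in the free product ruling out any collapse of $\langle \gamma s_i \gamma^{-1}, \gamma' s_j (\gamma')^{-1} \rangle$ to a finite subgroup, which ultimately rests on the fact (following from \cite[Proposition 3.1(a)]{van1982galois}) that the pairs $\{a_i, b_i\}$ and $\{a_j, b_j\}$ are disjoint in $\proj_K^1$.
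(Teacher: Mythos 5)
Your overall strategy -- characterizing each $\bbLambda_{(i)}$ as the fixed locus of $s_i$ in $\Hyp$ and then forbidding two distinct order-$p$ generators of the free product $\Gamma_0$ from sharing a fixed point -- is a genuine proof sketch, whereas the paper simply cites \cite[Proposition 2.13]{yelton2024branch} and \cite[Corollary 5.1]{yelton2024branch2}. The opening computation (that $s_i$, in normalized coordinates $z \mapsto \zeta_p^{n_i}z$, fixes $\eta_D$ iff $\delta(\eta_D, \Lambda_{(i)}) \leq \frac{v(p)}{p-1}$, using $v(\zeta_p^{n_i}-1) = \frac{v(p)}{p-1}$) is correct, and the Kurosh-type observation that $\langle \gamma s_i\gamma^{-1}, \gamma's_j\gamma'^{-1}\rangle$ is a free product $\zz/p\zz * \zz/p\zz$ when $i\neq j$ is also sound.

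However there are two gaps. First, the compactness argument fails in the paper's level of generality: $K$ is only assumed Henselian with a discrete valuation, so its residue field may be infinite, and $\cc_K$ has algebraically closed (hence infinite) residue field and non-discrete value group; consequently $\PGL_2(\mathcal{O}_{\cc_K})$ is not compact, and ``discrete subgroup of a compact group is finite'' does not apply. The fix is cheap and avoids topology altogether: every nontrivial element of the Schottky group $\Gamma$ is loxodromic and hence acts on $\Hyp$ with no fixed point (it translates along its axis), so the $\Gamma_0$-stabilizer $H$ of any $\eta \in \Hyp$ satisfies $H \cap \Gamma = \{1\}$ and therefore $|H| \leq [\Gamma_0 : \Gamma] = p < \infty$, contradicting $H \supseteq \langle s_i, s_j\rangle \cong \zz/p\zz * \zz/p\zz$. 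You should make that substitution.

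Second, and more seriously, the step you flag as the ``main obstacle'' is indeed where the real content lives. You assert that the Berkovich quotient map sends $\bbLambda_{(i)}$ onto $\bar{\bbLambda}_{(i)}$ and that the full preimage of $\bar{\bbLambda}_{(i)}$ is $\bigcup_{\gamma\in\Gamma_0}\gamma\,\bbLambda_{(i)}$. Your local computation for $z\mapsto z^p$ correctly gives the factor-$p$ dilation transverse to the axis in the range $\delta \leq \frac{v(p)}{p-1}$ (the Newton-polygon calculation $s = \min(pt, v(p)+t) = pt$ for $t \leq \frac{v(p)}{p-1}$), but $z \mapsto z^p$ is only a local model near the two fixed points of $s_i$: the global quotient map $\Omega \to \Omega/\Gamma_0 \cong \proj^1_K$ is a very different-looking $\Gamma_0$-invariant meromorphic function, and the induced map on $\Hyp$ is neither a local isometry nor injective on tubular neighborhoods of axes; in particular it is not obvious that the preimage of the image neighborhood is exactly the union of $\Gamma_0$-translates of $\bbLambda_{(i)}$ and nothing more. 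Establishing this carefully -- including what the map does between distinct axes, not just near a single fixed point -- is precisely the content of \cite[Theorem 1.3 and Corollary 5.1]{yelton2024branch2}, restated here as \Cref{thm distances between neighboring axes}, and it is a nontrivial piece of work. As written, the second half of the theorem is being deduced from an unproven global statement about the analytic quotient, so the proof is incomplete at exactly the point you identified.
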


\begin{proof}
The mutual disjointness of the subspaces $\bbLambda_{(i)} \subset \Berk$ is \cite[Proposition 2.13]{yelton2024branch}, while the mutual disjointness of the subspaces $\bar{\bbLambda}_{(i)} \subset \Berk$ is \cite[Corollary 5.1]{yelton2024branch2}.
\end{proof}

We set some notation that will be used from now on.  For any indices $i \neq j$, let $\tilde{v}_{i, j}$ (resp. $\tilde{v}_{j, i}$) be the closest point in $\bbLambda_{(i)}$ (resp. $\bbLambda_{(j)}$) to $\bbLambda_{(j)}$ (resp. $\bbLambda_{(i)}$), so that $[\tilde{v}_{i, j}, \tilde{v}_{j, i}]$ is the shortest (non-backtracking) path between $\bbLambda_{(i)}$ and $\bbLambda_{(j)}$, and define the points $\tilde{\bar{v}}_{i, j}, \tilde{\bar{v}}_{j, i}$ analogously with respect to $\bar{\bbLambda}_{(i)}, \bar{\bbLambda}_{(j)}$.  One sees from unique path-connectedness that the points $\tilde{v}_{i, j}, \tilde{v}_{j, i}$ lie on the shortest path connecting $\Lambda_{(i)}$ to $\Lambda_{(j)}$, so that we have $\tilde{v}_{i, j}, \tilde{v}_{j, i} \in \Sigma_S$; similarly, we have $\tilde{\bar{v}}_{i, j}, \tilde{\bar{v}}_{j, i} \in \Sigma_{\mathcal{B}}$.  For $1 \leq i \leq h$, let $v_i, \hat{v}_i \in \Berk$ denote the points corresponding respectively to the smallest disc $D_i$ which contains the points $a_i, b_i$ and the (unique) disc $\hat{D}_i \supseteq D_i$ satisfying $d(\hat{D}_i) = d(D_i) - \frac{v(p)}{p-1}$.  Analogously, let $\bar{v}_i, \hat{\bar{v}}_i \in \Berk$ denote the points corresponding respectively to the smallest disc $\bar{D}_i$ which contains the points $\alpha_i, \beta_i$ and the (unique) disc $\hat{\bar{D}}_i \supseteq \bar{D}_i$ satisfying $d(\hat{\bar{D}}_i) = d(\bar{D}_i) - \frac{pv(p)}{p-1}$.  (Note that we have $\hat{v}_i = v_i$ and $\hat{\bar{v}}_i = \bar{v}_i$ if $p$ is not the residue characteristic of $K$.)  Set $\hat{v}_0 = \hat{\bar{v}}_0 = \eta_\infty$.

Let $\mathcal{I}$ be the directed graph whose vertices are the indices $i \in \{0, \dots, h\}$ and such that any pair of indices $i \neq j$ are connected by an edge directed toward $i$ if the path $[\hat{\bar{v}}_i, \hat{\bar{v}}_j] \subset \Berk$ does not pass through the point $\hat{\bar{v}}_l$ for any $l \neq i, j$ and that we have $\hat{\bar{v}}_j > \hat{\bar{v}}_i$.  We remark that this graph $\mathcal{I}$ is clearly a tree rooted at the vertex $0$: we have $\hat{\bar{v}}_0 > \hat{\bar{v}}_i$ for $1 \leq i \leq h$, so $\mathcal{I}$ is connected with every vertex other than $0$ in a ``downward" path from $0$, and one sees using \Cref{rmk ordering} that there cannot be cycles in $\mathcal{I}$.  Given any index $i \in \{1, \dots, h\}$, let $i'$ be the (unique) index such that the corresponding vertices $i, i'$ of $\mathcal{I}$ are connected by an edge directed towards $i$.

\begin{prop} \label{prop v_i etc.}

Given an index $i \in \{1, \dots, h\}$, for any index $j \neq i$, we have the following.

\begin{enumerate}[(a)]

\item The point $v_i$ (resp. $\hat{v}_i$) is the unique maximal point in $\Lambda_{(i)} \subset \Sigma_S$ (resp. $\bbLambda_{(i)}$) with respect to the relation $>$, and the analogous statement holds for $\bar{v}_i \in \bar{\Lambda}_{(i)} \subset \Sigma_{\mathcal{B}}$ (resp. $\hat{\bar{v}}_i \in \bar{\bbLambda}_{(i)}$).

\item We have $\hat{v}_i = \tilde{v}_{i, i'}$ and $\hat{\bar{v}}_i = \tilde{\bar{v}}_{i, i'}$.  (In particular, we have $\hat{v}_i \in \bbLambda_{(i)} \cap \Sigma_S$ and $\hat{\bar{v}}_i \in \bar{\bbLambda}_{(i)} \cap \Sigma_{\mathcal{B}}$.)

\item We have $\delta(\tilde{v}_{j, i}, \Lambda_{(j)}) = \frac{v(p)}{p-1}$ and $\delta(\tilde{\bar{v}}_{j, i}, \bar{\Lambda}_{(j)}) = \frac{pv(p)}{p-1}$.

\end{enumerate}

\end{prop}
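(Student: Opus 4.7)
The plan is to handle the three parts in sequence, with parts (a) and (c) reducing to tree-metric bookkeeping inside $\Berk$ and part (b) being the geometric heart of the statement, requiring a compatibility between the $v$- and $\bar{v}$-families that I will import from earlier work.

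For (a), $\Lambda_{(i)} = \Sigma_{\{a_i, b_i\}}$ is the union of the two paths $[\eta_{a_i}, v_i]$ and $[v_i, \eta_{b_i}]$, since by construction $v_i = \eta_{D_i}$ is the maximum of the path between $\eta_{a_i}$ and $\eta_{b_i}$; via \Cref{rmk ordering} this makes $v_i$ the unique maximum of $\Lambda_{(i)} \cap \Hyp$ under $>$. For $\bbLambda_{(i)}$, any point lying strictly above $v_i$ in the tree sits on the upward ray from $v_i$ toward $\eta_\infty$ with distance to $\Lambda_{(i)}$ equal to its distance from $v_i$, so the tubular constraint $\leq \frac{v(p)}{p-1}$ forces the topmost such point to be $\hat{v}_i$; every other point of $\bbLambda_{(i)}$ lies on a perpendicular branch from some $\eta_D \in \Lambda_{(i)}$ and is therefore a descendant of $\eta_D \leq v_i < \hat{v}_i$. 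The $\bar{v}$-versions follow from verbatim parallel arguments. For (c), \Cref{thm disjointness of neighborhoods} gives $\bbLambda_{(i)} \cap \bbLambda_{(j)} = \varnothing$, and each tubular neighborhood is closed in $\Hyp$ since defined by a weak inequality; hence the unique shortest path in $\Berk$ from $\bbLambda_{(i)}$ into $\bbLambda_{(j)}$ meets $\bbLambda_{(j)}$ at a point of its topological boundary in $\Hyp$, which is precisely the locus where $\delta(\cdot, \Lambda_{(j)}) = \frac{v(p)}{p-1}$. This gives the stated equality for $\tilde{v}_{j, i}$, and the analogous application of the second disjointness statement in \Cref{thm disjointness of neighborhoods} gives the corresponding equality $\frac{pv(p)}{p-1}$ for $\tilde{\bar{v}}_{j, i}$.

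For (b), by part (a) it is enough, in each half, to show that the unique shortest path from the $i$-neighborhood to the $i'$-neighborhood exits the former in the direction of $\eta_\infty$, so that its exit point coincides with the unique maximum. For the branch-point claim $\hat{\bar{v}}_i = \tilde{\bar{v}}_{i, i'}$ I would exploit the defining properties of the edge $i' \to i$ in $\mathcal{I}$: the condition $\hat{\bar{v}}_{i'} > \hat{\bar{v}}_i$ says $\hat{\bar{v}}_{i'}$ lies on the upward ray from $\hat{\bar{v}}_i$ toward $\eta_\infty$, so the monotone upward segment $[\hat{\bar{v}}_i, \hat{\bar{v}}_{i'}]$ furnishes a candidate path from $\bar{\bbLambda}_{(i)}$ to $\bar{\bbLambda}_{(i')}$ exiting at $\hat{\bar{v}}_i$, and the absence of any intermediate $\hat{\bar{v}}_l$ on that segment, combined with \Cref{thm disjointness of neighborhoods}, lets one rule out any shorter sideways route starting at a different boundary point of $\bar{\bbLambda}_{(i)}$. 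For the fixed-point claim $\hat{v}_i = \tilde{v}_{i, i'}$, I would appeal to the compatibility between $\Sigma_S$ and $\Sigma_{\mathcal{B}}$ induced by the degree-$p$ quotient $\Omega \to \Omega / \Gamma_0$ that is established in the author's earlier work \cite{yelton2024branch, yelton2024branch2}, which implies that the tree of $\hat{v}_i$'s in $\Berk$ has the same parent-child structure as the tree of $\hat{\bar{v}}_i$'s; the same argument then transfers verbatim.

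The hardest step is precisely this last compatibility: the two families $\{\hat{v}_i\}$ and $\{\hat{\bar{v}}_i\}$ live in distinct portions of $\Berk$, and only once one knows that the quotient $\Omega \to \Omega / \Gamma_0$ preserves the relevant ancestor relations does the single index $i'$ extracted from $\mathcal{I}$ serve both halves of (b) simultaneously. Everything else is routine manipulation of the tree metric $\delta$ and the order $>$.
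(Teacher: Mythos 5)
Your proof tracks the paper's own argument quite closely in overall structure. For (a), your division of $\bbLambda_{(i)}$ into the upward ray above $v_i$ and the perpendicular branches hanging off $\Lambda_{(i)}$ is the same geometric picture the paper uses (the paper phrases it via the meet $w \vee \xi$ where $\xi$ is the closest axis point to $w$, which makes the case split rigorous). For (c), your boundary argument is in substance identical to the paper's, which explicitly locates the point at distance $\frac{v(p)}{p-1}$ from $\Lambda_{(j)}$ along the shortest path into $\bbLambda_{(j)}$.

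On (b) you make one observation that the paper glosses over and one unnecessary detour. The paper's proof opens with ``since we have $\hat{v}_{i'} > \hat{v}_i$,'' but $i'$ is defined via the $\hat{\bar{v}}$'s (the branch-point side), so this ordering on the fixed-point side is not immediate from the definition of $\mathcal{I}$; you are right to flag that transferring the parent--child structure from $\{\hat{\bar{v}}_i\}$ to $\{\hat{v}_i\}$ requires the compatibility of $\Sigma_S$ with $\Sigma_{\mathcal{B}}$ from the author's earlier work (which is what \Cref{thm distances between neighboring axes} later packages as $\pi_*$). Your ordering of the two halves (do $\hat{\bar{v}}$ first, then transfer to $\hat{v}$) is cleaner than the paper's. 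On the other hand, the part of your (b) argument that invokes the ``absence of any intermediate $\hat{\bar{v}}_l$'' on $[\hat{\bar{v}}_i, \hat{\bar{v}}_{i'}]$ and rules out ``sideways routes'' is not needed: once part (a) gives that $\hat{\bar{v}}_i$ is the unique maximum of $\bar{\bbLambda}_{(i)}$ and the ordering gives that all points of $[\hat{\bar{v}}_{i'}, \hat{\bar{v}}_i]$ are $\geq \hat{\bar{v}}_i$, it follows directly that $\hat{\bar{v}}_i$ is the unique point of that path in $\bar{\bbLambda}_{(i)}$, hence the closest point in $\bar{\bbLambda}_{(i)}$ to $\hat{\bar{v}}_{i'}$, hence (by the tree structure and disjointness of the two convex neighborhoods) to all of $\bar{\bbLambda}_{(i')}$. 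The extra tree-combinatorics about intermediate vertices is a distraction.
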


\begin{proof}
The fact that $v_i$ is maximal in the axis $\Lambda_{(i)}$ follows directly from the easily verified fact that the points in this axis are those corresponding to discs that either contain exactly one of the points $a_i, b_i$ or minimally contain both points $a_i, b_i$.  It follows from \Cref{rmk ordering} that given any $\xi \in \Lambda_{(i)}$ and any $v \notin \Lambda_{(i)}$ with $v \geq \xi$, we have $v \geq v_i$.

Now we clearly have $\hat{v}_i \geq v_i$ with $\delta(\hat{v}_i, v_i) = \frac{v(p)}{p-1}$.  In particular, this implies $\hat{v}_i \in \bbLambda_{(i)}$.  Now take any point $w \in \bbLambda_{(i)}$, and let $\xi$ be the closest point in $\Lambda_{(i)}$ to $w$.  We have $w \vee \xi \geq \xi$ and thus it follows that $w \vee \xi = \xi \in \Lambda_{(i)}$ or $w \vee \xi \geq v_i$.  In the former case, we get $\hat{v}_i \geq v_i \geq \xi = w \vee \xi \geq w$; in the latter case, we get $\delta(w \vee \xi, v_i) \leq \delta(w, v_i) \leq \frac{v(p)}{p-1} = \delta(\hat{v}_i, v_i)$ and therefore $v_i \geq w \vee \xi \geq w$.  Either way, we have shown that $\hat{v}_i \geq w$ for any $w \in \bbLambda_{(i)}$, proving the statement of part (a) regarding the points $v_i, \hat{v}_i$.  The statements for the points $\bar{v}_i, \hat{\bar{v}}_i$ follow from a completely analogous argument.

Since we have $\hat{v}_{i'} > \hat{v}_i$, every point in the path $[\hat{v}_{i'}, \hat{v}_i]$ is $\geq \hat{v}_i$.  This means by part (a) that $\hat{v}_i$ is the only point of $[\hat{v}_{i'}, \hat{v}_i]$ lying in $\bbLambda_{(i)}$ and therefore is the closest point in $\bbLambda_{(i)}$ to $\hat{v}_{i'}$ and thus to all of $\bbLambda_{(i')}$.  Part (b) follows from this and from a completely analogous argument involving $\hat{\bar{v}}_i$.

Now for any index $j \neq i$, we have $\bbLambda_{(i)} \cap \bbLambda_{(j)} = \varnothing$ by \Cref{thm disjointness of neighborhoods}, which implies the inequality $\delta(\tilde{v}_{i, j}, v_{j, i}) > \frac{v(p)}{p-1}$, where $v_{j, i}$ is the closest point in $\Lambda_{(j)}$ to $\tilde{v}_{i, j}$ (and thus to $\bbLambda_{(i)}$).  Write $\xi$ for the (unique) point in the path $[\tilde{v}_{i, j}, v_{j, i}]$ with $\delta(\xi, v_{j, i}) = \frac{v(p)}{p-1}$.  By definition we have $\xi \in \bbLambda_{(j)}$, and meanwhile, we have $\xi' \notin \bbLambda_{(j)}$ for any point $\xi'$ in the interior of the path $[\xi, \tilde{v}_{i, j}]$.  Therefore, we have $\xi = \tilde{v}_{j, i}$ by definition, proving the first equality asserted by part (c); the second equality follows from a completely analogous argument.
\end{proof}

After applying a suitable automorphism in $\PGL_2(K)$ to our set $S = \{a_i, b_i\}_{0 \leq i \leq h}$ and replacing $\Gamma \lhd \Gamma_0$ with their conjugates by that automorphism, we may assume $b_0 = \infty$: see \cite[Remark 1.2]{yelton2024branch2}.  A condition on the set $S$ called \emph{optimality} is defined in \cite[Definition 3.12]{yelton2024branch}, and it is shown as \cite[Proposition 3.8 combined with Theorem 3.13]{yelton2024branch} that one can always modify $S$ into an optimal set without changing $b_0$ or affecting the groups $\Gamma \lhd \Gamma_0$.  In light of all of this, we assume from now on that our set $S = \{a_i, b_i\}_{0 \leq i \leq h}$ is optimal and satisfies $b_0 = \infty$.

The author's previous work in \cite{yelton2024branch2} leads to the following crucial result relating the sets $S$ and $\mathcal{B}$.

\begin{thm} \label{thm distances between neighboring axes}

There is a homeomorphism $\pi_* : \Sigma_S \to \Sigma_{\mathcal{B}}$ extending the map given by $\eta_{a_i} \mapsto \eta_{\alpha_i}; \ \eta_{b_i} \mapsto \eta_{\beta_i}$, preserving the ordering relation $>$, mapping $\Lambda_{(i)}$ (resp. $\bbLambda_{(i)}$) onto $\bar{\Lambda}_{(i)}$ (resp. $\bar{\bbLambda}_{(i)}$) for each index $i$, and mapping the points $v_i, \hat{v}_i, \tilde{v}_{i, j}$ to $\bar{v}_i, \hat{\bar{v}}_i, \tilde{\bar{v}}_{i, j}$ respectively for all pairs of indices $i \neq j$.  We have $[\tilde{v}_{i, j}, \tilde{v}_{j, i}] \cap \bbLambda_{(l)} = \varnothing$ for all $l \neq i, j$ if and only if any $[\tilde{\bar{v}}_{i, j}, \tilde{\bar{v}}_{j, i}] \cap \bar{\bbLambda}_{(i)} = \varnothing$ for all $l \neq i$, and for such indices $i \neq j$, the homeomorphism $\pi_*$ restricts to an isometry $[\tilde{v}_{i, j}, \tilde{v}_{j, i}] \to [\tilde{\bar{v}}_{i, j}, \tilde{\bar{v}}_{j, i}]$.  In particular, we have 
\begin{equation} \label{eq distances between neighboring axes}
\delta(\bbLambda_{(i)}, \bbLambda_{(j)}) = \delta(\tilde{v}_{i, j}, \tilde{v}_{j, i}) = \delta(\tilde{\bar{v}}_{i, j}, \tilde{\bar{v}}_{j, i}) = \delta(\bar{\bbLambda}_{(i)}, \bar{\bbLambda}_{(j)}).
\end{equation}

\end{thm}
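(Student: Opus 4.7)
The plan is to construct $\pi_*$ as the restriction to $\Sigma_S$ of the Berkovich analytification of the quotient map $\pi \colon \Omega \to \Omega / \Gamma_0 \cong \proj_K^1$. Under this map, each fixed point $a_i$ (resp.\ $b_i$) of $s_i$ is sent to the branch point $\alpha_i$ (resp.\ $\beta_i$), so $\pi_*$ takes $\eta_{a_i}$ to $\eta_{\alpha_i}$ and $\eta_{b_i}$ to $\eta_{\beta_i}$ and consequently maps the skeleton $\Sigma_S$ onto $\Sigma_{\mathcal{B}}$. Since $\pi$ is a cyclic degree-$p$ Galois cover ramified precisely along the orbits $\Gamma_0 \cdot \{a_i, b_i\}$, the axes $\Lambda_{(i)}$ sit inside the topological ramification locus of $\pi_*$, while the complement of all $\bbLambda_{(i)}$ is the region on which one expects $\pi_*$ to be a local isometry.

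First I would appeal to the results of \cite{yelton2024branch2} (whose Corollary 5.1 is already invoked in \Cref{thm disjointness of neighborhoods}) to establish the key local behavior of $\pi_*$. The claim is that on $\bbLambda_{(i)}$, which has radius $\frac{v(p)}{p-1}$ around $\Lambda_{(i)}$, the map $\pi_*$ scales distances by a factor of $p$ and maps $\bbLambda_{(i)}$ bijectively onto $\bar{\bbLambda}_{(i)}$, whose downstairs radius $\frac{pv(p)}{p-1}$ is precisely $p$ times that of the upstairs neighborhood. This scaling factor is the local degree of $\pi_*$ at a point fixed by $s_i$ and captures the higher-ramification contribution of the cyclic order-$p$ action near a fixed point. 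On the complement $\Sigma_S \smallsetminus \bigcup_i \bbLambda_{(i)}$, the local degree of $\pi_*$ is $1$ and it restricts to an isometry.

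The identifications $\pi_*(v_i) = \bar{v}_i$, $\pi_*(\hat{v}_i) = \hat{\bar{v}}_i$, and $\pi_*(\tilde{v}_{i,j}) = \tilde{\bar{v}}_{i,j}$ will then follow from \Cref{prop v_i etc.}, since each of these distinguished points is characterized intrinsically by the order relation $>$ and the join $\vee$ applied to the image of $S$ (resp.\ $\mathcal{B}$), both of which are preserved by a tree-theoretic homeomorphism such as $\pi_*$. The equivalence that $[\tilde{v}_{i,j}, \tilde{v}_{j,i}] \cap \bbLambda_{(l)} = \varnothing$ if and only if $[\tilde{\bar{v}}_{i,j}, \tilde{\bar{v}}_{j,i}] \cap \bar{\bbLambda}_{(l)} = \varnothing$ then follows from the fact that $\pi_*$ sends $\bbLambda_{(l)}$ bijectively onto $\bar{\bbLambda}_{(l)}$, and when all these tubular neighborhoods are avoided, the entire path $[\tilde{v}_{i,j}, \tilde{v}_{j,i}]$ lies in the unramified region, where $\pi_*$ is an isometry, yielding (\ref{eq distances between neighboring axes}).

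The main obstacle is the careful verification of the local multiplicative factor of $p$ for $\pi_*$ inside the ramified tubular neighborhoods together with the isometry property on their complement, with the precise radii as stated. These assertions are essentially the content of the detailed analysis in \cite{yelton2024branch2} of how $\pi$ behaves near a fixed point of an order-$p$ element; but extracting them in the exact form needed here, and reconciling $\frac{v(p)}{p-1}$ with $\frac{pv(p)}{p-1}$ so that $\pi_*(\bbLambda_{(i)}) = \bar{\bbLambda}_{(i)}$ on the nose, will be the technical heart of the argument. Once this is in hand, the remaining topological and order-theoretic claims are formal consequences of $\pi_*$ being a quotient map of trees with the prescribed ramification data.
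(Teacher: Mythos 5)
Your proposal follows essentially the same route as the paper: both construct $\pi_*$ by invoking the author's earlier results on the quotient $\Omega \to \Omega/\Gamma_0$ in \cite{yelton2024branch2}, and both reduce the key claims to the local dilation behavior of $\pi_*$ (expansion by $p$ in the radial direction inside the tubes $\bbLambda_{(i)}$, isometry outside them), which reconciles the radii $\frac{v(p)}{p-1}$ and $\frac{pv(p)}{p-1}$. The paper makes this precise by directly applying the dilation formula $\delta(\pi_*(v),\pi_*(w)) = \delta(v,w) + (p-1)\mu(v,w)$ from [Theorem 1.3]{yelton2024branch2} (your clean ``scale by $p$ inside / isometry outside'' dichotomy is a slight oversimplification that elides the interpolating quantity $\mu$, but you correctly identify the two cases that actually matter and honestly flag the remaining verification as the technical heart).
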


\begin{proof}
This result is obtained as a corollary of \cite[Theorem 1.3]{yelton2024branch2} as follows.  The homeomorphism $\pi_* : \Sigma_S \to \Sigma_{\mathcal{B}}$ furnished by that theorem clearly maps $\Lambda_{(i)}$ onto $\bar{\Lambda}_{(i)}$ for each $i$.  Fixing an index $i$ and a point $w \in \Sigma_S$, let $\xi$ be the closest point in $\Lambda_{(i)}$ to $w$.  Then $\pi_*(v)$ is the closest point in $\bar{\Lambda}_{(i)}$ to $\pi_*(w)$.  If $\delta(v, w) \leq \frac{v(p)}{p-1}$, then, in the notation of that theorem, following definitions we get $\mu(v, w) = \delta(v, w)$; applying the formula 
\begin{equation} \label{eq dilation}
\delta(\pi_*(v), \pi_*(w)) = \delta(v, w) + (p - 1)\mu(v, w)
\end{equation}
 given by the theorem yields $\delta(\pi_*(v), \pi_*(w)) = p \delta(v, w) \leq \frac{pv(p)}{p-1}$ and so we have $\pi_*(w) \in \bar{\bbLambda}_{(i)}$.  If on the other hand we have $\delta(v, w) > \frac{v(p)}{p-1}$, then we have $\mu(v, w) = \frac{v(p)}{p-1} < \delta(v, w)$ and the formula (\ref{eq dilation}) implies that $\delta(\pi_*(v), \pi_*(w)) > \frac{pv(p)}{p-1}$ and so we have $\pi_*(w) \notin \bar{\bbLambda}_{(i)}$.  It follows that the homeomorphism $\pi_*$ maps each $\bbLambda_{(i)}$ onto $\bar{\bbLambda}_{(i)}$.  (See also \cite[Remark 1.4]{yelton2024branch2}.)

Suppose that $\eta, \eta' \in \Berk$ are points with $\eta' > \eta$.  Then the fact that $\pi_*(\eta') > \pi_*(\eta)$ follows from applying \Cref{rmk ordering} and considering that we have $\pi_*(\eta_\infty) = \pi_*(\eta_{b_0}) = \eta_{\beta_0} = \eta_\infty$.

Now, given any indices $i \neq j$, it is clear from the fact that $\pi_*$ is a homeomorphism of uniquely path-connected topological spaces that it maps $\tilde{v}_{i, j}$ (resp. $\tilde{v}_{j, i}$) to the closest point in $\bar{\bbLambda}_{(i)}$ (resp. $\bar{\bbLambda}_{(j)}$) to $\bar{\bbLambda}_{(j)}$ (resp. $\bar{\bbLambda}_{(i)}$), which by definition is $\tilde{\bar{v}}_{i, j}$ (resp. $\tilde{\bar{v}}_{j, i}$).  Similarly, using the characterization of the points $v_i, \hat{v}_i$ given by \Cref{prop v_i etc.}(a), we see that $\pi_*$ maps them respectively to $\bar{v}_i, \hat{\bar{v}}_i$.

The statement about $[\tilde{v}_{i, j}, \tilde{v}_{j, i}] \cap \bbLambda_{(l)}$ is now immediate.  If $i \neq j$ are chosen so that this intersection is empty for all $l \neq i, j$, then, in the notation of \cite[Theorem 1.3]{yelton2024branch2}, it follows from definitions that we have $\mu(v, w) = 0$ for any $v, w \in [\tilde{v}_{i, j}, \tilde{v}_{j, i}]$.  Thus, applying the formula (\ref{eq dilation}) given by that theorem to such points $v, w$ shows that $\pi_*$ restricts to the claimed isometry of paths.
\end{proof}

\begin{cor} \label{cor isometric connected components}

Let $\pi_* : \Sigma_S \to \Sigma_{\mathcal{B}}$ be the map given by \Cref{thm distances between neighboring axes}.  Let $J \subset \{1, \dots, h\}$ be a subset of indices such that we have $\tilde{v}_{i', i} = \tilde{v}_{j', j}$ (which by \Cref{thm disjointness of neighborhoods} implies $i' = j'$) for any $i, j \in J$, and write 
\begin{equation*}
U = \bigcup_{i \in J} [\tilde{v}_{i, i'}, \tilde{v}_{i', i}]; \ \ \bar{U} = \bigcup_{i \in J} [\tilde{\bar{v}}_{i, i'}, \tilde{\bar{v}}_{i', i}].
\end{equation*}
The map $\pi_*$ restricts to an isometry $U \to \bar{U}$ which preserves the ordering relation $>$.

\end{cor}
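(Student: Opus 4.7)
The plan is to reduce to applying \Cref{thm distances between neighboring axes} separately to each path $[\tilde{v}_{i, i'}, \tilde{v}_{i', i}]$ for $i \in J$, and then to assemble the resulting per-leg isometries into a single isometry on $U$ by using the tree-like structure of $U$, whose ``legs'' all meet at the common endpoint $\tilde{v}_{i', i}$ guaranteed by the hypothesis on $J$.

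The first and most delicate step will be verifying the hypothesis of \Cref{thm distances between neighboring axes}: for each $i \in J$ and each $l \neq i, i'$, I need to show that $[\tilde{v}_{i, i'}, \tilde{v}_{i', i}]$ does not meet $\bbLambda_{(l)}$. By \Cref{prop v_i etc.}(a, b) and the connectedness of $\bbLambda_{(i')}$, this path coincides with the initial (upward-monotone) segment of $[\hat{v}_i, \hat{v}_{i'}]$, up to the point where the latter first enters $\bbLambda_{(i')}$. Thus it suffices to establish $[\hat{v}_i, \hat{v}_{i'}] \cap \bbLambda_{(l)} = \varnothing$ for $l \neq i, i'$. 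Assuming some $\eta$ lies in this intersection, upward-monotonicity forces $\hat{v}_i \leq \eta \leq \hat{v}_{i'}$, and $\eta \leq \hat{v}_l$ since $\hat{v}_l$ is the maximum of $\bbLambda_{(l)}$. Both $\hat{v}_l$ and $\hat{v}_{i'}$ therefore lie above $\eta$ and are comparable by \Cref{rmk ordering}, leading to a three-case analysis: the case $\hat{v}_l = \hat{v}_{i'}$ is ruled out by \Cref{thm disjointness of neighborhoods}; the case $\hat{v}_l < \hat{v}_{i'}$ places $\hat{v}_l$ on the subsegment $[\eta, \hat{v}_{i'}] \subseteq [\hat{v}_i, \hat{v}_{i'}]$, contradicting the defining edge condition of $\mathcal{I}$ (which I transfer from the $\hat{\bar{v}}$-notation via \Cref{thm distances between neighboring axes}); and the case $\hat{v}_l > \hat{v}_{i'}$ places $\hat{v}_{i'}$ on $[\eta, \hat{v}_l] \subseteq \bbLambda_{(l)}$ by connectedness of $\bbLambda_{(l)}$, again violating \Cref{thm disjointness of neighborhoods}.

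Once this hypothesis is verified, \Cref{thm distances between neighboring axes} will furnish for each $i \in J$ an isometry $\pi_* : [\tilde{v}_{i, i'}, \tilde{v}_{i', i}] \to [\tilde{\bar{v}}_{i, i'}, \tilde{\bar{v}}_{i', i}]$ sending $\tilde{v}_{i', i}$ to $\tilde{\bar{v}}_{i', i}$. To extend this to a global isometry $U \to \bar{U}$, I will take any $\eta_1, \eta_2 \in U$ and form their join $w = \eta_1 \vee \eta_2$; unique path-connectedness in $\Sigma_S$ forces $w$ to lie on each leg containing one of $\eta_1, \eta_2$, and therefore in $U$. Because $\pi_*$ preserves the ordering $>$ on all of $\Sigma_S$, it commutes with $\vee$, so $\pi_*(w) = \pi_*(\eta_1) \vee \pi_*(\eta_2)$. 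The tree-distance decomposition $\delta(\eta_1, \eta_2) = \delta(\eta_1, w) + \delta(\eta_2, w)$, applied in both $U$ and $\bar{U}$ and combined with the per-leg isometries, will then yield the global isometry. Order-preservation on $U$ is immediate from that on $\Sigma_S$.

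The main obstacle is Step 1: the edge condition defining $\mathcal{I}$ only forbids the path $[\hat{v}_i, \hat{v}_{i'}]$ from passing through the individual points $\hat{v}_l$, and real work is needed to promote this to avoidance of the entire tubular neighborhoods $\bbLambda_{(l)}$. The trichotomy argument above, which leverages both \Cref{rmk ordering} and \Cref{thm disjointness of neighborhoods}, is what closes this gap.
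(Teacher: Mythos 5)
Your proof is correct and follows the same overall strategy as the paper's own proof — apply \Cref{thm distances between neighboring axes} to each leg $[\tilde{v}_{i,i'}, \tilde{v}_{i',i}]$ with $i \in J$, then assemble the per-leg isometries into a global one — but you supply rigor at two points that the paper treats tersely. First, the paper's proof simply asserts that each restriction $\pi_* : [\tilde{v}_{i,i'}, \tilde{v}_{i',i}] \to [\tilde{\bar{v}}_{i,i'}, \tilde{\bar{v}}_{i',i}]$ is an isometry, implicitly assuming the hypothesis of \Cref{thm distances between neighboring axes} that $[\tilde{v}_{i,i'}, \tilde{v}_{i',i}] \cap \bbLambda_{(l)} = \varnothing$ for $l \neq i, i'$; this is asserted elsewhere in the paper (in the proofs of parts (b) and of Lemma 5.4) as holding ``by construction of $i'$,'' but it is not spelled out, since the defining edge condition of $\mathcal{I}$ only forbids passing through the individual points $\hat{\bar{v}}_l$. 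Your trichotomy argument, combining \Cref{rmk ordering}, \Cref{thm disjointness of neighborhoods}, and the transfer of the edge condition through $\pi_*$, is a genuine and correct way of closing that gap. Second, the paper dispatches the assembly step with the one-line observation that $\pi_*|_U$ is a ``gluing of isometries along their non-empty intersections,'' which is somewhat glib as a general principle; your argument via the join $w = \eta_1 \vee \eta_2 \in U$, the fact that an order-preserving bijection commutes with $\vee$, and the tree-distance decomposition $\delta(\eta_1,\eta_2) = \delta(\eta_1,w) + \delta(\eta_2,w)$ makes the reason the gluing works here transparent. In substance the two proofs coincide; yours is simply more careful.
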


\begin{proof}
The statement follows quickly from \Cref{thm distances between neighboring axes} by noting that the restriction of the homeomorphism $\pi_*$ to $U$ is given by gluing together the isometries $[\tilde{v}_{i, i'}, \tilde{v}_{i', i}] \to [\pi_*(\tilde{v}_{i, i'}), \pi_*(\tilde{v}_{i', i})]$ along their (non-empty) intersections and thus is itself an isometry.  The order-preserving property is inherited from $\pi_*$.  (Alternately, this corollary can be proved directly from \cite[Theorem 1.3]{yelton2024branch2}.)
\end{proof}

\begin{dfn} \label{dfn vertices}

Given a finite subset $A \subset \proj_K^1$, a \emph{vertex} of $\Sigma_A$ is a point $v \in \Sigma_A$ whose open neighborhoods contain a star shape centered at $v$ (with $\geq 3$ edges coming out of it).

With the above notation, a \emph{distinguished vertex} of the convex hull $\Sigma_S$ (resp. $\Sigma_{\mathcal{B}}$) is a vertex which lies in $\Lambda_{(i)}$ (resp. $\bar{\Lambda}_{(i)}$) for some index $i \in \{0, \dots, h\}$.

\end{dfn}

We now provide a quick dictionary between the language of clusters and that of convex hulls in the Berkovich projective line which will be suitable for our purposes.

\begin{prop} \label{prop dictionary}

There is a one-to-one correspondence between the non-singleton clusters $\mathfrak{s}$ of $\mathcal{B}$ and the set of vertices of $\Sigma_{\mathcal{B}}$ given by sending a cluster $\mathfrak{s}$ to the point $\eta_{D_{\mathfrak{s}}}$, where $D_{\mathfrak{s}} \subset \cc_K$ is the smallest disc containing $\mathfrak{s}$.  We moreover have the following.

\begin{enumerate}[(a)]

\item For clusters $\mathfrak{s}, \mathfrak{c}$, we have $\eta_{D_{\mathfrak{s} \vee \mathfrak{c}}} = \eta_{D_{\mathfrak{s}}} \vee \eta_{D_{\mathfrak{c}}}$ and $\delta(\eta_{D_{\mathfrak{s}}}, \eta_{D_{\mathfrak{c}}}) = d(\mathfrak{s}) + d(\mathfrak{c}) - 2d(\mathfrak{s} \vee \mathfrak{c})$.

\item A cluster which is not (resp. is) the disjoint union of $\geq 2$ even-cardinality clusters corresponds to a distinguished vertex (resp. a non-distinguished vertex).

\item An cluster which is not the disjoint union of $\geq 2$ even-cardinality clusters coincides with $\mathfrak{s}_i$ for some index $i$ (and thus, by \Cref{prop v_i etc.}(a), corresponds to the maximal point in $\bar{\Lambda}_{(i)}$) if and only if that cluster has even cardinality.

\item An \"{u}bereven cluster corresponds to a non-distinguished vertex which does not lie in $\bbLambda_{(i)}$ for any index $i$.

\end{enumerate}

\end{prop}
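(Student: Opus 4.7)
The approach is to establish the bijection first and then leverage the disjointness result \Cref{thm disjointness of neighborhoods} for the combinatorial analysis. I would begin by verifying the main correspondence: for a non-singleton cluster $\mathfrak{s}$, the point $\eta_{D_\mathfrak{s}}$ has at least two inward directions in $\Sigma_{\mathcal{B}}$ (since $D_\mathfrak{s}$ is the smallest disc containing $\mathfrak{s}$, the elements of $\mathfrak{s}$ spread across $\geq 2$ children of $D_\mathfrak{s}$) plus one outward direction toward $\eta_\infty$ (since $\infty = \beta_0 \in \mathcal{B}$), so $\eta_{D_\mathfrak{s}}$ is a vertex. Conversely, if $\eta_D$ is a vertex of $\Sigma_{\mathcal{B}}$, then among its $\geq 3$ branches exactly one is outward, so $\mathcal{B} \cap D$ splits across $\geq 2$ children of $D$, yielding a non-singleton cluster, and no strictly smaller disc can contain $\mathcal{B} \cap D$ (else $\eta_D$ would have only two branches in $\Sigma_{\mathcal{B}}$), so $D = D_{\mathcal{B} \cap D}$. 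Part (a) then follows immediately: the smallest disc containing $D_\mathfrak{s} \cup D_\mathfrak{c}$ is precisely $D_{\mathfrak{s} \vee \mathfrak{c}}$ and defines $\eta_{D_\mathfrak{s}} \vee \eta_{D_\mathfrak{c}}$, while $\delta(\eta_{D_\mathfrak{s}}, \eta_{D_\mathfrak{c}})$ decomposes as the two monotone segments up to the join, summing to $d(\mathfrak{s}) + d(\mathfrak{c}) - 2 d(\mathfrak{s} \vee \mathfrak{c})$.

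For parts (b) and (c), the crucial input is the pairwise disjointness of the axes $\bar{\Lambda}_{(i)} \subseteq \bar{\bbLambda}_{(i)}$ furnished by \Cref{thm disjointness of neighborhoods}: at most one axis can pass through any single vertex $\eta_{D_\mathfrak{s}}$. I would classify each pair $(\alpha_i, \beta_i)$ relative to $\mathfrak{s}$ as Type I (both endpoints in one child of $\mathfrak{s}$), Type II (endpoints in two distinct children, which forces $\mathfrak{s} = \mathfrak{s}_i$), or Type III (exactly one endpoint in $\mathfrak{s}$), and verify that $\bar{\Lambda}_{(i)}$ passes through $\eta_{D_\mathfrak{s}}$ precisely when the pair is of Type II or Type III. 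In the non-distinguished case, only Type I pairs occur and singleton children are ruled out (the unique element of a singleton child would land in a Type II or III pair), so every child has even cardinality equal to twice the number of Type I pairs within it. In the distinguished case, disjointness forces exactly one pair to produce the crossing axis: either Type III (one odd-cardinality child containing the lone interior endpoint, giving $|\mathfrak{s}|$ odd and $\mathfrak{s} \neq \mathfrak{s}_i$) or Type II (two odd-cardinality children containing $\alpha_i$ and $\beta_i$, giving $|\mathfrak{s}|$ even and $\mathfrak{s} = \mathfrak{s}_i$). This case analysis simultaneously delivers (b) and the parity dichotomy of (c).

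For part (d), translating the übereven condition via part (a) yields $\delta(\eta_{D_\mathfrak{s}}, \eta_{D_\mathfrak{c}}) > \tfrac{pv(p)}{p-1}$ for every non-singleton distinguished cluster $\mathfrak{c}$. The key geometric observation is that if $\eta_{D_\mathfrak{s}}$ lies within $\tfrac{pv(p)}{p-1}$ of some axis $\bar{\Lambda}_{(i)}$, then the closest point $\eta$ on $\bar{\Lambda}_{(i)}$ to $\eta_{D_\mathfrak{s}}$ must itself be a vertex of $\Sigma_{\mathcal{B}}$ (it has the two axial branches plus an off-axis branch toward $\eta_{D_\mathfrak{s}}$) and hence a distinguished one at distance $\leq \tfrac{pv(p)}{p-1}$, contradicting übereven. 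The converse is easier: if $\eta_{D_\mathfrak{s}} \notin \bar{\bbLambda}_{(i)}$ for every $i$ and $\mathfrak{s}$ is itself non-distinguished, then any distinguished $\eta_{D_\mathfrak{c}}$ lies on some axis $\bar{\Lambda}_{(j)}$, so $\delta(\eta_{D_\mathfrak{s}}, \eta_{D_\mathfrak{c}}) \geq \delta(\eta_{D_\mathfrak{s}}, \bar{\Lambda}_{(j)}) > \tfrac{pv(p)}{p-1}$, and the non-distinguished hypothesis supplies the remaining requirement that $\mathfrak{s}$ itself decompose as $\geq 2$ even-cardinality clusters.

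The main obstacle will be the case analysis for parts (b) and (c): without the non-crossing of the pair axes supplied by \Cref{thm disjointness of neighborhoods}, one could imagine configurations in which multiple pair-axes simultaneously pass through $\eta_{D_\mathfrak{s}}$, producing distinguished vertices whose children remain even in cardinality through parity cancellation. The disjointness theorem is load-bearing in precluding this and must be invoked precisely to justify the ``at most one crossing axis per vertex'' fact on which the entire combinatorial classification rests.
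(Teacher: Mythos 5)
Your proposal is correct, and it takes a genuinely more self-contained route than the paper on the foundational pieces. The paper dispatches the bijection and part (b) by citing an external reference (a result of the author's companion paper), whereas you prove both from first principles: the bijection via the elementary branch-count argument, and (b) via your Type I/II/III classification of the pairs $(\alpha_i, \beta_i)$ relative to $\mathfrak{s}$. That classification is sound --- $\bar{\Lambda}_{(i)}$ passes through $\eta_{D_\mathfrak{s}}$ exactly when the pair is Type II or Type III, and \Cref{thm disjointness of neighborhoods} limits the number of such pairs to at most one --- and it delivers (b) and (c) simultaneously from the same parity bookkeeping. The paper's own proof of (c) is also a parity-plus-disjointness argument, but phrased in terms of ``$\#(\mathfrak{s} \cap \{\alpha_j,\beta_j\}) = 1$'' rather than your pair types; the content is the same. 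Your proof of (a) matches the paper's, and your proof of (d) is the same idea the paper uses (the closest point on an axis to a non-distinguished vertex is itself a distinguished vertex, so übereven translates to the distance condition). You are right that the disjointness theorem is the load-bearing input: without it, multiple pair-axes could pass through a single vertex and the parity dichotomy in (b)/(c) would collapse; this is exactly how the paper uses it in (c) as well. One small remark: the statement of part (d) as printed says $\bbLambda_{(i)}$, but the intended neighborhood (as the paper's own proof makes clear and as you correctly use) is the $\mathcal{B}$-side neighborhood $\bar{\bbLambda}_{(i)}$.
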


\begin{proof}
The claimed one-to-one correspondence and the claim of part (b) are given by \cite[Proposition 3.8(b)]{yelton2024branch2}.  The first identity given by part (a) comes from an immediate verification, and the claimed distance formula then follows directly from the definition of the distance function $\delta$.

Note that each cluster $\mathfrak{s}_i$ corresponds to the point $\bar{v}_i$ by definition of $\bar{v}_i$ and that $\bar{v}_i$ is the maximal point of $\bar{\Lambda}_{(i)}$ by \Cref{prop v_i etc.}(a).  Now let $\mathfrak{s}$ be a cluster which is not the disjoint union of $\geq 2$ even-cardinality clusters.  By part (b), the corresponding point $\eta_{D_{\mathfrak{s}}}$ is a distinguished vertex and thus lies in $\bar{\Lambda}_{(i)}$ for some index $i$; which means we have either $\#(\mathfrak{s} \cap \{\alpha_i, \beta_i\}) = 1$ or $\mathfrak{s} = \mathfrak{s}_i$.  If the cardinality of $\mathfrak{s}$ is even and if we have $\#(\mathfrak{s} \cap \{\alpha_i, \beta_i\}) = 1$, then we also have $\#(\mathfrak{s} \cap \{\alpha_j, \beta_j\}) = 1$ for some $j \neq i$, implying $\eta_{D_{\mathfrak{s}}} \in \bar{\Lambda}_{(i)} \cap \bar{\Lambda}_{(j)}$.  Similarly, if the cardinality of $\mathfrak{s}$ is odd and if we have $\mathfrak{s} = \mathfrak{s}_i$, then the fact that $\alpha_i, \beta_i \in \mathfrak{s}$ similarly implies $\#(\mathfrak{s} \cap \{\alpha_j, \beta_j\}) = 1$ for some $j \neq i$, again implying $\eta_{D_{\mathfrak{s}}} \in \bar{\Lambda}_{(i)} \cap \bar{\Lambda}_{(j)}$.  Both assumptions thus lead to a contradiction to \Cref{thm disjointness of neighborhoods}.  We conclude that the cluster $\mathfrak{s}$ has even cardinality if and only if we have $\mathfrak{s} = \mathfrak{s}_i$.  This proves part (c).

Now let $\mathfrak{s}$ be a cluster which is the disjoint union of $\geq 2$ even-cardinality clusters.  For each index $i$, let $\xi_i$ be the closest point in $\bar{\Lambda}_{(i)} \subset \Sigma_{\mathcal{B}}$ to point $\eta_{D_{\mathfrak{s}}} \in \Sigma_{\mathcal{B}}$ (which is a non-distinguished vertex by part (a)).  One sees that each point $\xi_i \in \Sigma_{\mathcal{B}}$ is a distinguished vertex by observing that the intersection of the (non-singleton) paths $[\eta_{a_i}, \xi_i], [\eta_{b_i}, \xi_i], [\eta_{D_{\mathfrak{s}}}, \xi_i] \subset \Sigma_{\mathcal{B}}$ coincides with the singleton $\{\xi_i\}$.  Thus by part (b), each vertex $\xi_i$ corresponds to a cluster $\mathfrak{c}_i$ which is not the disjoint union of $\geq 2$ even-cardinality clusters.  Suppose that we have $\eta_{D_{\mathfrak{s}}} \in \bar{\bbLambda}_{(i)}$ for some $i$.  This implies $\delta(\eta_{D_{\mathfrak{s}}}, \xi_i) \leq \frac{pv(p)}{p-1}$ for some $i$.  By part (a), this is the same inequality as $d(\mathfrak{s}) + d(\mathfrak{c}_i) - 2d(\mathfrak{s} \vee \mathfrak{c}_i) \leq \frac{pv(p)}{p-1}$ and thus violates \Cref{dfn ubereven}; therefore, the cluster $\mathfrak{s}$ is not \"{u}bereven.  Now suppose conversely that we have $\eta_{D_{\mathfrak{s}}} \notin \bar{\bbLambda}_{(i)}$ for all $i$.  Then each distinguished vertex $\xi$, being in some $\bar{\Lambda}_{(i)}$, satisfies $\delta(\eta_{D_{\mathfrak{s}}}, \xi) > \frac{pv(p)}{p-1}$.  Then by a similar use of parts (a) and (b), we get the inequality (\ref{eq ubereven}) for each such cluster $\mathfrak{c}$, and so \Cref{dfn ubereven} says that the cluster $\mathfrak{s}$ is \"{u}bereven.  This proves part (d).
\end{proof}

\begin{prop} \label{prop uniqueness used in main theorem}

Given an index $i \in \{1, \dots, h\}$, choose any point $\bar{w} \in [\bar{v}_i, \tilde{\bar{v}}_{i', i}]$; let $\bar{w}''$ be the vertex in the interior of $[\tilde{\bar{v}}_{i', i}, \tilde{v}_{i, i'} \vee \bar{w}]$ which is closest to $\bar{w}$ if one exists; and let $\bar{w}'' = \tilde{\bar{v}}_{i', i}$ otherwise.  For all indices $j \neq i, i'$ satisfying $\bar{v}_j \vee \bar{w} > \tilde{v}_{i, i'} \vee \bar{w}$, we have $\delta(\tilde{\bar{v}}_{j, i}, \tilde{v}_{i, i'} \vee \bar{w}) > \delta(\bar{w}'', \tilde{v}_{i, i'} \vee \bar{w})$.

\end{prop}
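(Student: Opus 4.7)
The plan is to localize $\tilde{\bar{v}}_{j,i}$ by combining the pairwise disjointness of the tubes $\bar{\bbLambda}_{(l)}$ supplied by \Cref{thm disjointness of neighborhoods} with the tree structure on $\mathcal{I}$. Set $\bar{w}_0 := \tilde{\bar{v}}_{i,i'} \vee \bar{w}$ and $\xi := \bar{v}_j \vee \bar{w}$. Since $\bar{w}_0 \geq \bar{w}$ one checks immediately that $\xi = \bar{v}_j \vee \bar{w}_0$; moreover \Cref{prop v_i etc.}(b) together with $\bar{w} \in [\bar{v}_i, \tilde{\bar{v}}_{i',i}]$ places $\bar{w}_0$ on the segment $[\hat{\bar{v}}_i, \tilde{\bar{v}}_{i',i}]$. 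The hypothesis of the proposition translates to $\xi > \bar{w}_0$, so $\xi$ lies strictly above $\bar{w}_0$ on the monotonically upward path from $\bar{v}_i$.

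The first step I would establish is a no-intrusion lemma: the segment $[\hat{\bar{v}}_i, \tilde{\bar{v}}_{i',i}]$ is disjoint from every tube $\bar{\bbLambda}_{(l)}$ with $l \neq i, i'$. This segment is contained in the monotonically upward path $[\hat{\bar{v}}_i, \hat{\bar{v}}_{i'}]$ (monotone because $\hat{\bar{v}}_i < \hat{\bar{v}}_{i'}$ by the defining property of the edge $i \to i'$ in $\mathcal{I}$); by \Cref{prop v_i etc.}(a) the point $\hat{\bar{v}}_l$ is the unique maximum of $\bar{\bbLambda}_{(l)}$, so a monotonic upward path entering $\bar{\bbLambda}_{(l)}$ would be forced to exit at $\hat{\bar{v}}_l$, contradicting the very definition of the $\mathcal{I}$-edge.

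With this in hand I would split into two cases according to where $\xi$ falls on the upward path. In \emph{Case A}, $\xi \in (\bar{w}_0, \tilde{\bar{v}}_{i',i})$: three distinct directions meet at $\xi$ (toward $\bar{v}_j$, upward, and downward along the main path), so $\xi$ is a vertex of $\Sigma_{\mathcal{B}}$ in the interior of $[\bar{w}_0, \tilde{\bar{v}}_{i',i}]$, giving $\delta(\bar{w}'', \bar{w}_0) \leq \delta(\xi, \bar{w}_0)$; the no-intrusion lemma forces $\xi \notin \bar{\bbLambda}_{(j)}$, hence $\delta(\bar{v}_j, \xi) > \tfrac{pv(p)}{p-1}$, placing $\tilde{\bar{v}}_{j,i}$ strictly between $\bar{v}_j$ and $\xi$ on the geodesic $[\bar{v}_j, \bar{v}_i]$ (which passes through their join $\xi$), so that
\[
\delta(\tilde{\bar{v}}_{j,i}, \bar{w}_0) = \delta(\tilde{\bar{v}}_{j,i}, \xi) + \delta(\xi, \bar{w}_0) > \delta(\xi, \bar{w}_0) \geq \delta(\bar{w}'', \bar{w}_0).
\]
In \emph{Case B}, $\xi \geq \tilde{\bar{v}}_{i',i}$, so $\delta(\bar{w}'', \bar{w}_0) \leq \delta(\tilde{\bar{v}}_{i',i}, \bar{w}_0) \leq \delta(\xi, \bar{w}_0)$; if $\xi \notin \bar{\bbLambda}_{(j)}$ the Case A triangle inequality still applies and yields the strict inequality (using $\xi > \tilde{\bar{v}}_{i',i}$ or, when $\xi = \tilde{\bar{v}}_{i',i}$, using that $\tilde{\bar{v}}_{j,i} \neq \xi$ follows from tube-disjointness). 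Otherwise $\xi \in \bar{\bbLambda}_{(j)}$, which combined with $\bar{\bbLambda}_{(j)} \cap \bar{\bbLambda}_{(i')} = \varnothing$ rules out $\xi \in [\tilde{\bar{v}}_{i',i}, \hat{\bar{v}}_{i'}]$, forcing $\xi > \hat{\bar{v}}_{i'}$; here $\tilde{\bar{v}}_{j,i}$ sits on $[\xi, \bar{v}_i]$ past $\xi$, but disjointness of $\bar{\bbLambda}_{(j)}$ from both $\bar{\bbLambda}_{(i')}$ and the main path interior traps $\tilde{\bar{v}}_{j,i}$ strictly above $\tilde{\bar{v}}_{i',i}$, giving
\[
\delta(\tilde{\bar{v}}_{j,i}, \bar{w}_0) = \delta(\tilde{\bar{v}}_{j,i}, \tilde{\bar{v}}_{i',i}) + \delta(\tilde{\bar{v}}_{i',i}, \bar{w}_0) > \delta(\bar{w}'', \bar{w}_0).
\]

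The main obstacle I expect is precisely this last sub-case, in which $\xi$ climbs above $\hat{\bar{v}}_{i'}$ so that the no-intrusion lemma no longer directly applies; one must argue purely from pairwise disjointness of the tubes that $\tilde{\bar{v}}_{j,i}$ cannot descend as far as $\tilde{\bar{v}}_{i',i}$ before exiting $\bar{\bbLambda}_{(j)}$, so that the full corridor $[\tilde{\bar{v}}_{i',i}, \bar{w}_0]$ appears as a sub-segment of the geodesic from $\tilde{\bar{v}}_{j,i}$ to $\bar{w}_0$, delivering the required strict excess over $\delta(\bar{w}'', \bar{w}_0)$.
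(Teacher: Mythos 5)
Your proof is correct and takes a genuinely different route from the paper's, although both arguments hinge on the same underlying facts (disjointness of the tubes $\bar{\bbLambda}_{(l)}$ from \Cref{thm disjointness of neighborhoods} and the tree structure on $\mathcal{I}$). The paper analyzes the join $\eta := \tilde{\bar{v}}_{j,i} \vee \bar{w}$ directly: after a preliminary step showing $\eta > \tilde{\bar{v}}_{i,i'} \vee \bar{w}$, it splits according to whether $\eta$ lands on the corridor $[\tilde{\bar{v}}_{i',i}, \tilde{\bar{v}}_{i,i'}\vee\bar{w}]$ or strictly above it, and in the on-corridor case shows $\eta$ is a non-distinguished vertex $\geq \bar{w}''$. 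You instead work with $\xi := \bar{v}_j \vee \bar{w}$ (tied directly to the hypothesis) and split on whether $\xi$ stays strictly inside the corridor (Case A) or climbs to or past $\tilde{\bar{v}}_{i',i}$ (Case B); your Case B further subdivides on whether $\xi$ enters $\bar{\bbLambda}_{(j)}$, which happens only when $\xi > \hat{\bar{v}}_{i'}$, and then you trap $\tilde{\bar{v}}_{j,i}$ strictly above $\tilde{\bar{v}}_{i',i}$ via convexity of the tubes. Your ``no-intrusion lemma'' — that $[\hat{\bar{v}}_i, \tilde{\bar{v}}_{i',i}]$ misses every tube $\bar{\bbLambda}_{(l)}$ with $l\neq i,i'$, deduced from the $\mathcal{I}$-edge condition plus the uniqueness of the maximal point $\hat{\bar{v}}_l$ — is the same principle the paper invokes tersely as ``from the construction of $i'$'', and extracting it as a standalone statement makes the bookkeeping cleaner. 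The paper's route is shorter because $\eta$ automatically sits on the geodesic from $\tilde{\bar{v}}_{j,i}$ to $\bar{w}_0$ (so $\delta(\tilde{\bar{v}}_{j,i},\bar{w}_0) \geq \delta(\eta,\bar{w}_0)$ is immediate); your route is somewhat more transparent geometrically since $\xi$ is the quantity the hypothesis constrains.
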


\begin{proof}
Let $j \neq i, i'$ be an index such that $\bar{v}_j \vee \bar{w} > \tilde{\bar{v}}_{i, i'} \vee \bar{w}$.  Then we must have $\tilde{\bar{v}}_{j, i} \vee \bar{w} > \tilde{\bar{v}}_{i, i'} \vee \bar{w}$, because otherwise, we would have $\tilde{v}_{i, i'} \vee \bar{w} \in [\bar{v}_j, \tilde{\bar{v}}_{j, i}] \subset \bar{\bbLambda}_{(j)}$, which contadicts the definition of $i'$ and the hypothesis on $\bar{w}$.  Suppose that we have $\tilde{\bar{v}}_{j, i} \vee \bar{w} \notin [\tilde{\bar{v}}_{i', i}, \tilde{\bar{v}}_{i, i'} \vee \bar{w}]$.  This implies the inequalities 
\begin{equation}
\delta(\tilde{\bar{v}}_{j, i}, \tilde{\bar{v}}_{i, i'} \vee \bar{w}) \geq \delta(\tilde{\bar{v}}_{j, i} \vee \bar{w}, \tilde{\bar{v}}_{i, i'} \vee \bar{w}) > \delta(\tilde{\bar{v}}_{i', i}, \tilde{\bar{v}}_{i, i'} \vee \bar{w}) \geq \delta(\bar{w}'', \tilde{\bar{v}}_{i, i'} \vee \bar{w}).
\end{equation}
Suppose instead that we have $\tilde{\bar{v}}_{j, i} \vee \bar{w} \in [\tilde{\bar{v}}_{i', i}, \tilde{\bar{v}}_{i, i'} \vee \bar{w}]$.  We then have $\tilde{\bar{v}}_{j, i} \vee \bar{w} \neq \tilde{\bar{v}}_{j, i} \in \bbLambda_{(j)}$ from the construction of $i'$.  Thus, the paths $[\tilde{\bar{v}}_{j, i}, \tilde{\bar{v}}_{j, i} \vee \bar{w}], [\bar{w}, \tilde{\bar{v}}_{j, i} \vee \bar{w}], [\eta_\infty, \tilde{\bar{v}}_{j, i} \vee \bar{w}] \subset \Sigma_S$ each have interior and all intersect at the singleton $\{\tilde{\bar{v}}_{j, i} \vee \bar{w}\}$.  This implies by definition that the point $\tilde{\bar{v}}_{j, i} \vee \bar{w}$ is a vertex.  If we have $\tilde{\bar{v}}_{j, i} \vee \bar{w} \neq \tilde{\bar{v}}_{i', i}$, then it follows from the construction of $i'$ that the point $\tilde{\bar{v}}_{j, i} \vee \bar{w}$ is not a distinguished vertex, which implies that non-distinguished vertices in $[\bar{v}_i, \tilde{v}_{i', i}]$ exist and $\tilde{\bar{v}}_{j, i} \vee \bar{w} \geq \bar{w}''$.  Now, noting that we also have $\tilde{\bar{v}}_{i', i} \geq \bar{w}''$, we are guaranteed the inequalities 
\begin{equation}
\delta(\bar{v}_j, \tilde{\bar{v}}_{i, i'} \vee \bar{w}) \geq \delta(\tilde{\bar{v}}_{j, i}, \tilde{\bar{v}}_{i, i'} \vee \bar{w}) > \delta(\tilde{\bar{v}}_{j, i} \vee \bar{w}, \tilde{\bar{v}}_{i, i'} \vee \bar{w}) \geq \delta(\bar{w}'', \tilde{\bar{v}}_{i, i'} \vee \bar{w}).
\end{equation}
\end{proof}

\begin{cor} \label{cor uniqueness used in main theorem}

Given a cluster $\mathfrak{s} \in \mathfrak{C}_0$, a cluster $\mathfrak{s}'$ defined as in the statement of \Cref{thm main} exists, is unique, and can be described as follows.  Letting $\bar{w} \in \Sigma_S$ be the point corresponding via \Cref{prop dictionary} to the cluster $\mathfrak{s}$, we have $\bar{w} = \bar{v}_i$ or $\bar{w} \in [\tilde{v}_{i', i}, \hat{\bar{v}}_i]$ for some $i \in \{1, \dots, h\}$.  Let $\bar{w}'$ be the vertex in the interior of $[\tilde{\bar{v}}_{i', i}, \bar{w}]$ which is closest to $\bar{w}$ if one exists, and let $\bar{w}'$ be the closest point in $\bar{\Lambda}_{(i')}$ to $\bar{w}$ otherwise.  The cluster $\mathfrak{s}'$ is then the one corresponding to $\bar{w}'$ via \Cref{prop dictionary}.

\end{cor}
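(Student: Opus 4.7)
The plan is to translate the conditions in the statement of \Cref{thm main} into the Berkovich-geometric language of \S\ref{sec berk} via \Cref{prop dictionary} and then invoke \Cref{prop uniqueness used in main theorem} to pin down $\mathfrak{s}'$.

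First I would use \Cref{prop dictionary}(a) to rewrite the admissibility inequality $d(\mathfrak{s}) - d(\mathfrak{s} \vee \mathfrak{c}) > (1 - u_{\{\mathfrak{s}\}})\tfrac{pv(p)}{p-1}$ as $\delta(\bar{w}, \bar{w} \vee \eta_{D_{\mathfrak{c}}}) > (1 - u_{\{\mathfrak{s}\}})\tfrac{pv(p)}{p-1}$ and the quantity to be minimized as
\begin{equation*}
\Delta(\mathfrak{c}) := \delta(\bar{w}, \eta_{D_{\mathfrak{c}}}) - (2 - u_{\{\mathfrak{s}, \mathfrak{c}\}})\tfrac{pv(p)}{p-1},
\end{equation*}
and then use \Cref{prop dictionary}(b)--(d) to classify the candidate vertices $\eta_{D_{\mathfrak{c}}}$ (with $\mathfrak{c} \in \mathfrak{C}$) as either distinguished vertices lying in some $\bar{\bbLambda}_{(j)}$ or non-distinguished vertices lying strictly outside every $\bar{\bbLambda}_{(j)}$. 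Since $\mathfrak{s} \in \mathfrak{C}_0$ has even cardinality, \Cref{prop dictionary}(c) shows that either $\mathfrak{s} = \mathfrak{s}_i$ for some $i$ (giving $\bar{w} = \bar{v}_i$), or $\mathfrak{s}$ is übereven and, by \Cref{prop dictionary}(d) together with \Cref{thm disjointness of neighborhoods} and the tree structure of $\Sigma_{\mathcal{B}}$, $\bar{w}$ lies in the interior of a unique bridge $[\hat{\bar{v}}_i, \tilde{\bar{v}}_{i', i}]$, where $i'$ is the parent of $i$ in the tree $\mathcal{I}$; this verifies the assertion that $\bar{w} = \bar{v}_i$ or $\bar{w} \in [\tilde{\bar{v}}_{i', i}, \hat{\bar{v}}_i]$.

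Next I would show that the point $\bar{w}'$ defined in the statement is the unique minimizer of $\Delta$ subject to admissibility. The admissibility condition says exactly that the meet $\bar{w} \vee \eta_{D_{\mathfrak{c}}}$ lies strictly above $\hat{\bar{v}}_i$ in the first case and strictly above $\bar{w}$ in the second, ruling out $\eta_{D_{\mathfrak{c}}}$ in the downward subtree at $\bar{w}$'s upward exit. For candidates lying on the upward path $[\tilde{\bar{v}}_{i', i}, \bar{w}]$ itself or in $\bar{\bbLambda}_{(i')}$ just past $\tilde{\bar{v}}_{i', i}$, the correction term $(2 - u_{\{\mathfrak{s}, \mathfrak{c}\}})\tfrac{pv(p)}{p-1}$ exactly cancels the internal radial distance traversed inside $\bar{\bbLambda}_{(\cdot)}$-neighborhoods, so $\Delta$ is strictly monotone in the distance from $\bar{w}$ along this path and attains its minimum at the closest such candidate, which is $\bar{w}'$ by construction. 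For candidates branching into a subtree indexed by some $j \neq i, i'$, \Cref{prop uniqueness used in main theorem} (taking $\bar{w}'' = \bar{w}'$ there) gives the strict inequality $\delta(\tilde{\bar{v}}_{j, i}, \tilde{\bar{v}}_{i, i'} \vee \bar{w}) > \delta(\bar{w}', \tilde{\bar{v}}_{i, i'} \vee \bar{w})$, which after accounting for the entry cost $\tfrac{pv(p)}{p-1}$ into $\bar{\bbLambda}_{(j)}$ yields $\Delta(\mathfrak{c}) > \Delta(\bar{w}')$. Existence is then automatic from the finiteness of $\mathfrak{C}$ together with the admissibility of the cluster corresponding to $\bar{w}'$ itself.

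The main obstacle is correctly tracking the correction term $(2 - u_{\{\mathfrak{s}, \mathfrak{c}\}})\tfrac{pv(p)}{p-1}$, which depends on the übereven/distinguished types of both $\mathfrak{s}$ and $\mathfrak{c}$ and must be shown to compensate exactly for the $\tfrac{pv(p)}{p-1}$-depth of each $\bar{\bbLambda}_{(\cdot)}$-neighborhood that $\Delta$ straddles; once this bookkeeping is in place, $\Delta$ becomes a pure $\delta$-distance between exit points of neighborhoods, and the uniqueness claim reduces cleanly to \Cref{prop uniqueness used in main theorem}.
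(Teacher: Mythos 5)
Your proposal is correct and follows essentially the same strategy as the paper's own proof: translate the admissibility inequality and the quantity to be minimized into Berkovich distances via \Cref{prop dictionary}, split into cases on whether $\mathfrak{s}$ equals some $\mathfrak{s}_i$ (distinguished vertex $\bar{w} = \bar{v}_i$) or is übereven (non-distinguished vertex on a bridge), observe that the correction term $(2 - u_{\{\mathfrak{s},\mathfrak{c}\}})\tfrac{pv(p)}{p-1}$ exactly offsets the portions of path traversed inside the tubular neighborhoods $\bar{\bbLambda}_{(\cdot)}$ so that the quantity reduces to a distance between exit points, and then invoke \Cref{prop uniqueness used in main theorem} to rule out candidates branching off toward some $\bar{\bbLambda}_{(j)}$ with $j \neq i, i'$. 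The only point worth making explicit in a full writeup is the case of a candidate vertex lying in $\bar{\Lambda}_{(i')}$ (i.e.\ $j = i'$), which falls outside the hypotheses of \Cref{prop uniqueness used in main theorem} but is handled directly by the observation that $\tilde{\bar{v}}_{i',i} \geq \bar{w}''$; the paper treats this implicitly in the same way you gesture at with your ``just past $\tilde{\bar{v}}_{i',i}$'' remark.
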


\begin{proof}
If $\mathfrak{s} = \mathfrak{s}_i$ for some $i \in \{1, \dots, h\}$, then this cluster is not \"{u}bereven by \Cref{prop dictionary}(d) and we have $\bar{w} = \bar{v}_i$ and 
\begin{equation} \label{eq uniqueness small distance1}
\delta(\tilde{\bar{v}}_{i, i'} \vee \bar{w}, \bar{w}) = \delta(\tilde{\bar{v}}_{i, i'}, \bar{v}_i) = \tfrac{pv(p)}{p-1} = (1 - u_{\{\mathfrak{s}\}})\tfrac{pv(p)}{p-1}.
\end{equation}

If on the other hand $\mathfrak{s}$ does not equal any of the clusters $\mathfrak{s}_i$, then by definition of $\mathfrak{C}_0$, this cluster is \"{u}bereven; by \Cref{prop dictionary}(d), the corresponding point $\bar{w}$ is a vertex not lying in $\bar{\bbLambda}_{(j)}$ for any index $j$.  Given any element $\alpha_j$ or $\beta_j$ of the cluster $\mathfrak{s}$, the path $[\bar{w}, \eta_{\alpha_j}]$ or $[\bar{w}, \eta_{\beta_j}]$ (whose maximal point with respect to $>$ is $\bar{w}$) is contained in $\Sigma_{\mathcal{B}}$; such a path eventually intersects a tubular neighborhood $\bar{\bbLambda}_{(l)}$ for at least one index $l \in \{1, \dots, h\}$ (as $j$ is such an index).  Among such indices $l$, let $i$ be the one which minimizes $\delta(\bar{w}, \bar{\bbLambda}_{(l)})$.  Then it is clear not only that $\hat{\bar{v}}_i = \tilde{\bar{v}}_{i, i'}$ (see \Cref{prop v_i etc.}(b)) is the closest point in $\bar{\bbLambda}_{(i)}$ to $\bar{w}$ but that we have $\tilde{\bar{v}}_{i', i} > \bar{w} > \tilde{\bar{v}}_{i, i'}$.  In particular, we have 
\begin{equation} \label{eq uniqueness small distance2}
\delta(\tilde{\bar{v}}_{i, i'} \vee \bar{w}, \bar{w}) = \delta(\bar{w}, \bar{w}) = 0 = (1 - u_{\{\mathfrak{s}\}})\tfrac{pv(p)}{p-1}.
\end{equation}

Now, given any cluster $\mathfrak{c} \in \mathfrak{C}$ with corresponding point $\eta_{D_{\mathfrak{c}}} \in \Sigma_{\mathcal{B}}$, we see using \Cref{prop dictionary}(a) that the hypothesis $d(\mathfrak{s}) - d(\mathfrak{s} \vee \mathfrak{c}) > (1 - u_{\{\mathfrak{s}\}})\frac{pv(p)}{p-1}$ translates to the conditions $\eta_{D_{\mathfrak{c}}} \vee \bar{w} > \bar{w}$ and $\delta(\eta_{D_{\mathfrak{c}}}, \bar{w}) > (1 - u_{\{\mathfrak{s}\}}) \frac{pv(p)}{p-1}$.  Using (\ref{eq uniqueness small distance1}),(\ref{eq uniqueness small distance2}), these conditions imply $\delta(\eta_{D_{\mathfrak{c}}} \vee \bar{w}, \tilde{\bar{v}}_{i, i'} \vee \bar{w}) > 0$, in turn implying $\eta_{D_{\mathfrak{c}}} \vee \bar{w} > \tilde{\bar{v}}_{i, i'} \vee \bar{w}$.

If the cluster $\mathfrak{c}$ is \"{u}bereven, then, as $\eta_{D_{\mathfrak{c}}} \vee \bar{w}$ is the vertex corresponding to the cluster $\mathfrak{c} \vee \mathfrak{s}$ by \Cref{prop dictionary}(a), using (\ref{eq uniqueness small distance1}),(\ref{eq uniqueness small distance2}) and the definition of $\bar{w}''$ from \Cref{prop uniqueness used in main theorem}, we get 
\begin{equation}
\delta(\eta_{D_{\mathfrak{c}}}, \bar{w}) - (2 - u_{\{\mathfrak{s}, \mathfrak{c}\}}) \tfrac{pv(p)}{p-1} = \delta(\eta_{D_{\mathfrak{c}}}, \bar{w}) - (1 - u_{\{\mathfrak{s}\}}) \tfrac{pv(p)}{p-1} \geq \delta(\eta_{D_{\mathfrak{c}}} \vee \bar{w}, \tilde{v}_{i, i'} \vee \bar{w}) \geq \delta(\bar{w}'', \tilde{v}_{i, i'} \vee \bar{w}),
\end{equation}
with equalities only if $\mathfrak{c} = \mathfrak{s}'$ (in which case $\bar{w}' = \bar{w}''$ is a vertex).

If, on the other hand, the cluster $\mathfrak{c}$ is not \"{u}bereven, by definition of $\mathfrak{C}$ and by \Cref{prop dictionary}(b), its corresponding vertex $\eta_{D_{\mathfrak{c}}}$ lies in $\bar{\Lambda}_{(j)}$ for some index $j \neq i$.  Then, using (\ref{eq uniqueness small distance1}),(\ref{eq uniqueness small distance2}), \Cref{prop v_i etc.}(c), and \Cref{prop uniqueness used in main theorem}, we get 
\begin{equation}
\delta(\eta_{D_{\mathfrak{c}}}, \bar{w}) - (2 - u_{\{\mathfrak{s}, \mathfrak{c}\}}) \tfrac{pv(p)}{p-1} \geq \delta(\tilde{v}_{j, i}, \bar{w}) - (1 - u_{\{\mathfrak{s}\}}) \tfrac{pv(p)}{p-1} = \delta(\tilde{v}_{j, i}, \tilde{v}_{i, i'} \vee \bar{w}) \geq \delta(\bar{w}'', \tilde{v}_{i, i'} \vee \bar{w}),
\end{equation}
with equalities only if $\mathfrak{c} = \mathfrak{s}'$.

By definition, we have $\bar{w}' = \bar{w}''$ if one of them is a non-distinguished vertex (which by \Cref{prop dictionary}(d) happens exactly when $\mathfrak{s}'$ is \"{u}bereven), and when this is not the case, we have $\bar{w}'' \in [\bar{w}', \tilde{v}_{i, i'}]$ with $\delta(\bar{w}', \bar{w}'') = \frac{pv(p)}{p-1} = (1 - u_{\{\mathfrak{s}'\}}) \frac{pv(p)}{p-1}$ by \Cref{prop v_i etc.}(c).  This, combined with (\ref{eq uniqueness small distance1}),(\ref{eq uniqueness small distance2}), gives us 
\begin{equation}
\delta(\bar{w}'', \tilde{v}_{i, i'} \vee \bar{w}) = \delta(\bar{w}', \bar{w}) - (2 - u_{\{\mathfrak{s}, \mathfrak{s}'\}}) \tfrac{pv(p)}{p-1}.
\end{equation}
Putting all of this together and using \Cref{prop dictionary}(a) to translate distances between points of $\Sigma_{\mathcal{B}}$ into differences of depths of clusters, we get 
\begin{equation}
d(\mathfrak{s}) + d(\mathfrak{c}) - 2d(\mathfrak{s} \vee \mathfrak{c}) - (2 - u_{\{\mathfrak{s}, \mathfrak{c}\}}) \tfrac{pv(p)}{p-1} \geq d(\mathfrak{s}) + d(\mathfrak{s}') - 2d(\mathfrak{s} \vee \mathfrak{s}') - (2 - u_{\{\mathfrak{s}, \mathfrak{s}'\}}) \tfrac{pv(p)}{p-1},
\end{equation}
with equality only if $\mathfrak{c} = \mathfrak{s}'$.  The cluster $\mathfrak{s}'$ as we have defined it for this proposition therefore fits exactly the definition of $\mathfrak{s}'$ given in the statement of \Cref{thm main}.
\end{proof}

\section{Valuations of a period matrix of the Jacobian} \label{sec period matrix}

The goal of this section is to compute the values of the pairing 
\begin{equation}
\langle \cdot, \cdot \rangle_\mon^\varphi : \bar{\Gamma} \times \bar{\Gamma} \to \zz, \ \ (\gamma, \gamma') \mapsto v(c_\gamma(\gamma')).
\end{equation}
An explicit description of the values of this pairing for any given $\gamma, \gamma' \in \Gamma$ is provided in \cite[\S4]{drinfeld1973periods} in terms of the action of $\Gamma$ on the \emph{Bruhat-Tits tree} associated to $\PGL_2(K)$, defined for instance by Mumford in \cite[\S1]{mumford1972analytic} and by Serre in \cite[\S II.1]{serre2003trees}.  We denote this tree by $\mathcal{T}$ and define it in the present article as follows.

Denoting the ring of integers of $K$ by $\mathcal{O}_K$, the vertices of $\mathcal{T}$ are the equivalence classes of rank-$2$ $\mathcal{O}_K$-submodules of $K^2$, where two such $\mathcal{O}_K$-modules $M, M'$ are equivalent if there is a scalar $a \in K^\times$ such that we have $M' = aM$.  Two vertices of $\mathcal{T}$ are connected by an edge if they are represented by $\mathcal{O}_K$-modules $M \supset M'$ with $M / M'$ isomorphic to the residue field of $K$.  It is known (for instance as \cite[Theorem II.1.1.1]{serre2003trees}) that the graph $\mathcal{T}$ defined this way is a tree.  The group of automorphisms $\GL_2(K)$ acts in an obvious way on the space of rank-$2$ $\mathcal{O}_K$-submodules of $K^2$, and this clearly induces an action of $\PGL_2(K)$ on the vertices of $\mathcal{T}$ which is well known and easily verified to be edge-preserving.

A \emph{loxodromic} (or \emph{hyperbolic}) element of $\PGL_2(K)$ is one which is represented by a matrix in $\GL_2(K)$ which is conjugate to a diagonal matrix whose diagonal entries have distinct valuations.  Given a loxodromic automorphism $\gamma \in \PGL_2(K)$, let $u_\gamma$ denote the quotient of these diagonal elements when they are ordered so that $v(u_\gamma) > 0$.  Then it is well known (see for instance \cite[\S II.1.3]{serre2003trees}) that there is an infinite non-backtracking path $L_\gamma \subset \mathcal{T}$ which is stabilized by $\gamma$ and on which $\gamma$ acts by \emph{translation} with \emph{amplitude} $v(u_\gamma) \in \zz_{> 0}$ on this axis -- that is, the automorphism $\gamma$ acts on the line $L_\gamma$ by moving each vertex of $L_\gamma$ to the vertex $v(u_\gamma)$ edges away in a fixed direction.  We call the subgraph $L_\gamma \subset \mathcal{T}$ the \emph{axis} of the loxodromic element $\gamma$.  As an easy exeercise one verifies that the amplitude $v(u_\gamma)$ can also be characterized as the minimum of the distance between $v$ and $\gamma(v)$ over all vertices $v$ of $\mathcal{T}$.  Below we will also freely use the easily verified fact that, fixing an element $\gamma' \in \Gamma$, the action of $\Gamma$ on the axis $L_{\gamma'}$ is given by $\gamma : L_{\gamma'} \mapsto L_{\gamma \gamma' \gamma^{-1}}$.

It is well known that every nontrivial element of a Schottky group is loxodromic (see for instance \cite[\S I.1.5]{gerritzen2006schottky}).  With respect to our Schottky group $\Gamma$ used to uniformize our superelliptic curve, let us write $\mathcal{T}_\Gamma := \bigcup_{\gamma \in \Gamma \smallsetminus \{1\}} L_\gamma \subset \mathcal{T}$ for the union of the axes of the nontrivial elements of $\Gamma$.  As the action of $\Gamma$ on $\mathcal{T}$ permutes the axes $L_\gamma$, the subtree $\mathcal{T}_\Gamma \subset \mathcal{T}$ is stabilized by this action.  The resulting quotient graph $\mathcal{T}_\Gamma / \Gamma$ is a finite graph by \cite[Theorem 1.23]{mumford1972analytic}.  Let us write $\pi_* : \mathcal{T}_\Gamma \to \mathcal{T}_\Gamma / \Gamma$ for the associated quotient map of graphs.

Given any two vertices $v, w \in \mathcal{T}$, write $[v, w] \subset \mathcal{T}$ for the subtree consisting of the (shortest) path from $v$ to $w$; the choice of initial vertex $v$ and terminal vertex $w$ gives this path an \emph{orientation}.  Given any graph $G$ with vertices $v, w, v', w'$ and any two (possibly backtracking) paths $[v, w], [v', w'] \subset \mathcal{T}_\Gamma / \Gamma$, define $e([v, w], [v', w'])$ to be the total number of edges in $[v, w] \cap [v', w']$, counted with orientation.  More precisely, if the pairs $v, w$ and $v', w'$ are each connected by a single edge in the respective paths $[v, w]$ and $[v', w']$, then we set $e([v, w], [v', w'])$ to be $0$ if these edges are distinct and $1$ (resp. $-1$) if the edges are the same and $v' = v$ (resp. $v' = w$); then for general paths $[v, w], [v', w']$, we define the pairing $e$ by treating paths as sums of edges (counted with orientation so that opposite orientation corresponds to the additive inverse) and extending $\zz$-bilinearly.

The following result, which is a rephrasing of \cite[Theorem 5]{drinfeld1973periods}, will be our main tool for computing the valuations $v(c_\gamma(\gamma'))$ for elements $\gamma, \gamma' \in \Gamma$.

\begin{thm} \label{thm drinfeld-manin}

Choose any vertices $v, w$ of $\mathcal{T}_\Gamma$.  For any elements $\gamma, \gamma' \in \Gamma$, we have 
\begin{equation}
v(c_\gamma(\gamma')) = e(\pi_*([v, \gamma(v)]), \pi_*([w, \gamma'(w)])).
\end{equation}
In particular, we have $v(c_\gamma(\gamma)) > 0$ for all $\gamma \in \Gamma \smallsetminus \{1\}$.

\end{thm}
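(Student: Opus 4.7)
The plan is to obtain the claimed formula as a direct rephrasing of Theorem 5 of \cite{drinfeld1973periods}, and then deduce the positivity claim from the action of $\gamma$ on its axis $L_\gamma$.

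First I would verify that the right-hand side defines a well-posed $\zz$-bilinear pairing $\bar{\Gamma} \times \bar{\Gamma} \to \zz$ that is independent of the choice of $v, w \in \mathcal{T}_\Gamma$.  For independence in $v$, one decomposes the oriented path as $[v', \gamma(v')] = [v', v] + [v, \gamma(v)] + \gamma([v, v'])$; since $\gamma$ acts trivially on the quotient $\mathcal{T}_\Gamma / \Gamma$, the images $\pi_*([v', v])$ and $\pi_*(\gamma([v, v'])) = \pi_*([v, v'])$ are the same path traversed with opposite orientations, and so cancel.  Independence in $w$ is symmetric.  Bilinearity in $\gamma$ follows from the cocycle identity $[v, \gamma_1 \gamma_2(v)] = [v, \gamma_1(v)] + \gamma_1([v, \gamma_2(v)])$, which descends to additivity under $\pi_*$; since the target $\zz$ is abelian, the induced map $\Gamma \to \zz$ factors through $\bar{\Gamma}$.

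The content of Drinfeld's theorem is that this well-defined bilinear pairing equals $v(c_\gamma(\gamma'))$.  Matching our oriented edge-counting pairing $e(\cdot, \cdot)$ with Drinfeld's intersection-index description of the period pairing is essentially a notational translation, and this is where I anticipate the main obstacle: orientation and sign conventions must be reconciled between the two formulations, and one needs to confirm that the auxiliary vertices $v, w$ in our statement play the same role as the base points used in the construction of the period pairing in \cite[\S4]{drinfeld1973periods}.

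For the positivity claim, choose $v$ to be a vertex on the axis $L_\gamma$, so that $[v, \gamma(v)] \subset L_\gamma$ has length equal to the translation amplitude $v(u_\gamma) > 0$.  Write $\gamma = \gamma_0^k$ with $\gamma_0 \in \Gamma$ primitive (such a $\gamma_0$ exists since $\Gamma$ is free), so that $L_\gamma = L_{\gamma_0}$ and the stabilizer of $L_{\gamma_0}$ in $\Gamma$ is precisely $\langle \gamma_0 \rangle$ (a standard feature of Schottky groups, cf.\ \cite[\S I.1]{gerritzen2006schottky}).  Then $\pi_*(L_{\gamma_0})$ is an embedded cycle of length $v(u_{\gamma_0})$ in $\mathcal{T}_\Gamma / \Gamma$, and $\pi_*([v, \gamma(v)])$ wraps around this cycle $k$ times with consistent orientation.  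The self-pairing $e(\pi_*([v, \gamma(v)]), \pi_*([v, \gamma(v)]))$ therefore equals $k^2 v(u_{\gamma_0}) > 0$, as required.
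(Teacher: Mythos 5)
Your overall approach --- cite Drinfeld--Manin's Theorem~5 and handle positivity separately --- matches the paper, which gives no proof at all beyond the citation; the convention mismatch you flag as the ``main obstacle'' is exactly what Remark~\ref{rmk drinfeld-manin} resolves, by showing that the two theta-function definitions differ by a factor which cancels in $c_{a,b}$. Your well-posedness checks are correct but not needed for that citation.

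The positivity argument, however, has a real gap: it is not true in general that $\pi_*(L_{\gamma_0})$ is an embedded cycle when $\gamma_0 \in \Gamma$ is primitive. For $\Gamma = \langle a,b\rangle$ with quotient graph a rose on petals $e_a, e_b$, take $\gamma_0 = a^2 b$, a primitive cyclically-reduced word of translation length $3$: the projected $1$-chain is $2e_a + e_b$, the petal $e_a$ is covered twice, and the self-pairing is $2^2 + 1 = 5 \neq 3 = v(u_{\gamma_0})$. Knowing that the setwise stabilizer of $L_{\gamma_0}$ in $\Gamma$ is $\langle \gamma_0\rangle$ does not help; the element $a$ identifies the vertices $v$ and $a(v)$ of $L_{\gamma_0}$ while carrying $L_{\gamma_0}$ to the distinct axis $L_{a\gamma_0 a^{-1}}$. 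Worse, the same reasoning applied to $\gamma_0 = aba^{-1}b^{-1}$ would assert a positive self-pairing, but there the projected $1$-chain is $e_a + e_b - e_a - e_b = 0$; and in fact $c_\gamma$ is the trivial homomorphism for every $\gamma$ in the commutator subgroup, since $\gamma \mapsto c_\gamma$ is a homomorphism into the abelian group $\Hom(\bar{\Gamma}, K^\times)$ and so factors through $\bar{\Gamma}$ (cf.\ \S\ref{sec background jacobians}). The positivity claim should therefore be read as applying to $\gamma$ whose image in $\bar{\Gamma}$ is non-trivial, and the clean argument needs no embeddedness claim or translation-length computation: with $v = w$, the pairing becomes $e(c,c) = \sum_e c_e^2$ for the $1$-chain $c = \pi_*([v,\gamma(v)])$, which is a $1$-cycle representing the class of $\gamma$ in $H_1(\mathcal{T}_\Gamma / \Gamma, \zz) \cong \bar{\Gamma}$ and hence is non-zero whenever $\bar{\gamma} \neq 0$.
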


\begin{rmk} \label{rmk drinfeld-manin}

One may observe that the theta functions $\Theta_{a, b}$ are defined differently in \cite{drinfeld1973periods}, so that the factor corresponding to each element $\tilde{\gamma} \in \Gamma$ in the product formula is $\frac{\tilde{\gamma}(z) - a}{\tilde{\gamma}(z) - b}$ rather than $\frac{z - \tilde{\gamma}(a)}{z - \tilde{\gamma}(b)}$.  However, it is easy to show from the proof of \Cref{lemma automorphy van steen} below (putting $s = \tilde{\gamma}^{-1}$) that each such factor under Drinfeld-Manin's definition equals $\frac{A - Ca}{A - Cb}$ times the factor corresponding to $\tilde{\gamma}^{-1}$ under our definition, where $\tilde{\gamma}$ is represented by $\begin{bsmallmatrix} A & B \\ C & D \end{bsmallmatrix}$.  These terms $\frac{A - Ca}{A - Cb}$ cancel out in the definiton of $c_{a, b}$; in particular $c_\gamma = c_{a, \gamma(a)}$ is not affected, and so we may still use \Cref{thm drinfeld-manin}.

\end{rmk}

To implement the formula given in the above theorem, we want to move to the Berkovich setting that was developed in \S\ref{sec berk}.  In order to make this transition, we define $\mathcal{A}$ to be the graph whose vertices are the points of $\Hyp$ whose corresponding discs have logarithmic radius in $\zz$ and contain an element of $K$, and where two vertices $\eta_D, \eta_{D'}$ are connected by an edge if we have $\delta(\eta_D, \eta_{D'}) = 1$.

\begin{rmk}

Given any two vertices $v, w$ of $\mathcal{A}$, one easily confirms that the point $v \vee w \in \Hyp$ is also a vertex of $\mathcal{A}$; using this fact, we see that the distance between the vertices $v$ and $w$ (defined as the number of edges in the shortest path between them) equals the distance $\delta(v, w)$ used to define $\Hyp$ as a metric graph.

\end{rmk}

Given any automorphism $\gamma \in \PGL_2(K)$ of $\proj_K^1$, there is a natural way to define its action on $\Hyp$: see \cite[\S2.3]{baker2010potential} or \cite[\S7.1]{benedetto2019dynamics}, for instance.  The action of any automorphism $\gamma \in \PGL_2(K)$ on a point $\eta_D \in \Hyp$ may be described concretely as $\gamma(\eta_D) = \eta_{E}$, where $E = \gamma(D)$ if $\infty \notin \gamma(D)$ and $E$ is the smallest closed disc containing $K \smallsetminus \gamma(D)$ if $\infty \in \gamma(D)$ (as can be deduced from \cite[Proposition 7.6, Theorem 7.12]{benedetto2019dynamics}; see also \cite[Proposition 2.4]{yelton2024branch}).  The action of each element of $\PGL_2(K)$ on $\Hyp$ is a metric-preserving self-homeomorphism by \cite[Proposition 2.30]{baker2010potential}.  It follows that this action induces an edge-preserving action of $\PGL_2(K)$ on the vertices of the graph $\mathcal{A}$.

\begin{rmk} \label{rmk equivalence between Berk and T}

The trees $\mathcal{T}$ and $\mathcal{A}$ are isomorphic as graphs in a way that carries the action of $\PGL_2(K)$ on $\mathcal{T}$ to its action on $\mathcal{A}$.  This is well known and is more or less shown in \cite[\S I.2.6]{gerritzen2006schottky}.  Here is a summary of a more direct argument.  We assign to each vertex $v$ of $\mathcal{T}$ a vertex of $\mathcal{A}$ as follows.  The vertex $v$ is represented by a rank-$2$ $\mathcal{O}_K$-submodule $M \subset K^2$.  Let $r \in \zz$ be the minimal valuation among the second coordinates of elements of $M$, and let $M_0 = \{(\alpha, \beta) \in M \ | \ v(\beta) = r\}$.  Then the corresponding vertex of $\mathcal{A}$ is $\eta_D$, where $D = \{\frac{\alpha}{\beta} \ | \ (\alpha, \beta) \in M_0\}$.  The inverse of this function assigns a vertex $\eta = \eta_D$ of $\mathcal{A}$ to the equivalence class of the rank-$2$ $\mathcal{O}_K$-submodule $M \subset K^2$ generated by $\{(a, 1), (b, 1)\} \subset K^2$, where $a, b \in D$ are elements such that $D$ is the smallest disc containing $a, b$ (so that its logarithmic radius is $v(b - a)$).

One shows that the actions of $\PGL_2(K)$ on $\mathcal{T}$ and $\mathcal{A}$ are compatible by separately considering the actions of the automorphisms in $\PGL_2(K)$ represented by the matrices $\begin{bsmallmatrix} 0 & 1 \\ 1 & 0 \end{bsmallmatrix}$ and $\begin{bsmallmatrix} \delta & \epsilon \\ 0 & 1 \end{bsmallmatrix}$ with $\delta \in K^\times$, $\epsilon \in K$ (these act on $\proj_K^1$ by the reciprocal automorphism and all affine automorphisms respectively), noting that these elements generate the whole group $\PGL_2(K)$.  Showing compatibility for any affine automorphism is immediate.  To show this for the reciprocal automorphism, choose a point $\eta = \eta_D \in \Hyp$ and points $a, b \in D$ such that $D$ is the smallest disc containing $a, b$ and we have $v(a) = v(b)$.  Then one checks that the reciprocal automorphism sends $\eta_D$ to $\eta_E$, where $E$ is the smallest disc containing $a^{-1}, b^{-1}$.  It is easy to verify that $\eta_D \mapsto \eta_E$ commutes with the action of the automorphism $\begin{bsmallmatrix} 0 & 1 \\ 1 & 0 \end{bsmallmatrix}$ on representative $\mathcal{O}_K$-modules of the corresponding vertices of $\mathcal{T}$.

\end{rmk}

In light of \Cref{rmk equivalence between Berk and T}, from now on we identify the Bruhat-Tits tree $\mathcal{T}$ with the tree $\mathcal{A}$ defined above; in this way, one views the tree $\mathcal{T}$ as the underlying graph of the metric graph given by enhancing the metric space $\Sigma_{\proj_K^1} \cap \Hyp$ with vertices given by the points corresponding to discs with integer logarithmic radius and center in $K$.

Recall that the group $\Gamma_0$ containing the Schottky group with index $p$ is generated by the elements $s_i \in \PGL_2(K)$ which each have order $p$ and fix the points $a_i, b_i \in S$ for $0 \leq i \leq h$, and recall the set of (free) generators $\gamma_{i, 0, n}$ of the Schottky group $\Gamma \lhd \Gamma_0$ defined in (\ref{eq Gamma}).  It is an elementary exercise to show from this definition of the Schottky group that it is in fact the subgroup of $\Gamma_0$ consisting of those words whose total exponent is divisible by $p$.  In particular, we have  
\begin{equation}
\gamma_{i, j, n} := s_j^{n-1} s_i s_j^{-n} \in \Gamma, \ \ i, j \in \{0, \dots, h\}, n \in \zz.
\end{equation}

\begin{prop} \label{prop basis of Gamma}

The $(p - 1)h$-element set $\{\gamma_{i, i', n}\}_{1 \leq i \leq h, \, 1 \leq n \leq p - 1}$ generates the Schottky group $\Gamma$.

\end{prop}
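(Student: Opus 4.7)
The plan is to show that every free generator $\gamma_{i,0,n}$ of $\Gamma$ lies in the subgroup $H := \langle \gamma_{j,j',m} : 1 \leq j \leq h,\, 1 \leq m \leq p-1 \rangle$, and to do so by induction on the depth of the vertex $i$ in the rooted tree $\mathcal{I}$, using an explicit word identity derived from the defining relations.

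First I will establish, by an easy induction on $k$, the auxiliary formula
\[
s_{i'}^k = \gamma_{i',0,1}\,\gamma_{i',0,2}\cdots \gamma_{i',0,k}\cdot s_0^k \qquad (1 \leq k \leq p),
\]
whose base case $k=1$ reads $\gamma_{i',0,1} s_0 = (s_{i'} s_0^{-1}) s_0 = s_{i'}$, and whose inductive step uses the observation that $s_0^{k-1} s_{i'} s_0^{-k} = \gamma_{i',0,k}$. Substituting this expression for $s_{i'}^{n-1}$ and (the inverse formula for) $s_{i'}^{-n}$ into the definition $\gamma_{i,i',n} = s_{i'}^{n-1}\, s_i\, s_{i'}^{-n}$ and recognizing $s_0^{n-1} s_i s_0^{-n}$ as $\gamma_{i,0,n}$, a direct telescoping computation yields the key identity
\[
\gamma_{i,i',n} = W_n\, \gamma_{i,0,n}\, \gamma_{i',0,n}^{-1}\, W_n^{-1}, \qquad W_n := \gamma_{i',0,1}\,\gamma_{i',0,2}\cdots \gamma_{i',0,n-1},
\]
with the convention that $W_1$ is the empty product. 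Solving this equation for $\gamma_{i,0,n}$ gives
\[
\gamma_{i,0,n} = W_n^{-1}\, \gamma_{i,i',n}\, W_n\, \gamma_{i',0,n},
\]
which exhibits $\gamma_{i,0,n}$ as an explicit word in $\gamma_{i,i',n}$ and the elements $\gamma_{i',0,m}$ for $1 \leq m \leq n$.

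With this word identity in hand, I induct on the depth of $i$ in $\mathcal{I}$. In the base case, $i$ has depth $1$, meaning $i' = 0$; since $\gamma_{0,0,m} = s_0^{m-1} s_0 s_0^{-m} = 1$, the formula degenerates to $\gamma_{i,0,n} = \gamma_{i,i',n}$, which lies in $H$ trivially. In the inductive step, $i'$ has strictly smaller depth in $\mathcal{I}$, so the inductive hypothesis supplies $\gamma_{i',0,m} \in H$ for every $m$; combined with $\gamma_{i,i',n} \in H$, the formula above places $\gamma_{i,0,n}$ into $H$. Since $H$ then contains every free generator of $\Gamma$, we conclude $H = \Gamma$.

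The only substantive step is the telescoping identity in the second paragraph; once it is in place the tree induction is immediate, and the key point to keep careful track of is simply that every simplification used is a consequence of the defining relations $s_i^p = 1$ of $\Gamma_0$ (no further relations are imposed). I do not expect any genuine obstacle; the main risk is bookkeeping errors in the telescoping computation and in verifying the convention $W_1 = 1$ handles the edge case $n = 1$ correctly.
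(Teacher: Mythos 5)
Your proof is correct, and it is essentially the same argument as the paper's: both exploit the tree $\mathcal{I}$ and a telescoping identity among the conjugates. The paper writes a single closed-form word expressing $\gamma_{i,0,n}$ as a product of $\gamma_{j,j',m}$'s along the full path from $i$ to the root $0$, whereas you unroll this one edge at a time via induction on depth; these are the same computation, organized differently, and your step-by-step version is arguably a little easier to check.
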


\begin{proof}
Consider the tree $\mathcal{I}$ defined in \S\ref{sec berk}.  Given any index $i \neq 0$, let $i^{(0)} := i, i^{(1)} := i', i^{(2)} := (i')', \dots, i^{(r)} := 0$ (for some $r \geq 1$) denote the sequence of vertices on the path from $i$ to $0$ in $\mathcal{I}$.  For any $i \neq 0$, we may write 
\begin{equation} \label{eq gammai0n}
\gamma_{i, 0, n} = [s_{i^{(r-1)}}^{n - 1} s_{i^{(r)}}^{1 - n}]^{-1} \cdots [s_{i^{(1)}}^{n - 1} s_{i^{(2)}}^{1 - n}]^{-1} \gamma_{i^{(0)}, i^{(1)}, n} [s_{i^{(1)}}^n s_{i^{(2)}}^{-n}] \cdots [s_{i^{(r-1)}}^n, s_{i^{(r)}}^{-n}].
\end{equation}
On checking that for any $n \in \{1, \dots, p - 1\}$ and any $j \neq 0$, we have $\gamma_{j, j', 1} \cdots \gamma_{j, j', n - 1} = s_j^{n - 1} s_{j'}^{1 - n}$ and $\gamma_{j, j', 1} \cdots \gamma_{j, j', n} = s_j^n s_{j'}^{-n}$, we see from (\ref{eq gammai0n}) that the element $\gamma_{i, 0, n}$ for each index $i \neq 0$ is a product of elements of the form $\gamma_{j, j', m}$ with $j \neq 0$ and $1 \leq m \leq n$.  As the elements $\gamma_{i, 0, n}$ generate the group $\Gamma$, the proposition is proved.
\end{proof}

\begin{cor} \label{cor basis of barGamma}

The $(p - 1)h$-element set $\{\gamma_{i, i', n}\}_{1 \leq i \leq h, \, 1 \leq n \leq p - 1}$ of images of elements of $\Gamma$ in the abelianization $\bar{\Gamma}$ is a basis of the free $\zz$-module $\Gamma$.

\end{cor}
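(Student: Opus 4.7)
The plan is to deduce this corollary almost immediately from Proposition 4.1 together with the fact that $\Gamma$ is free of rank exactly $(p-1)h$. Recall that in \S\ref{sec background Schottky} it was noted that the generators $\gamma_{i, 0, n}$ (for $1 \leq i \leq h$, $1 \leq n \leq p-1$) of the Schottky group $\Gamma$ satisfy no relations, so $\Gamma$ is a free group of rank $g = (p-1)h$. Consequently the abelianization $\bar{\Gamma}$ is a free $\zz$-module of rank $(p-1)h$.

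Next, I would invoke Proposition 4.1, which states that the set $\{\gamma_{i, i', n}\}_{1 \leq i \leq h, \, 1 \leq n \leq p-1}$ generates $\Gamma$. Passing to the abelianization, the images of these elements generate $\bar{\Gamma}$ as a $\zz$-module.

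The set has exactly $(p-1)h$ elements, which equals the rank of $\bar{\Gamma}$. The final step is the standard module-theoretic observation: any generating set of a free $\zz$-module of rank $n$ with exactly $n$ elements is automatically a basis. Concretely, writing the generators as the images of the standard basis under a surjective $\zz$-linear map $\zz^{(p-1)h} \twoheadrightarrow \bar{\Gamma}$, this map has torsion-free kernel whose quotient gives back $\bar{\Gamma}$, which forces the kernel to be trivial by rank considerations; hence the map is an isomorphism and the images form a $\zz$-basis.

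No step here poses a real obstacle — the content is entirely in the generation statement Proposition 4.1; the corollary is a dimension count applied to a free group. The only subtlety worth mentioning in the write-up is pinning down the rank of $\bar{\Gamma}$ via the freeness of $\Gamma$, which was already recorded earlier in the paper.
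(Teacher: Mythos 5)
Your proposal is correct and follows essentially the same route as the paper: freeness and rank $(p-1)h$ of $\Gamma$, hence of $\bar{\Gamma}$, combined with the generation statement of Proposition~\ref{prop basis of Gamma}, and then the dimension-count argument. You merely make explicit the final step (a generating set of size equal to the rank of a free $\zz$-module is a basis), which the paper compresses into ``The result follows.''
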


\begin{proof}
We know that the Schottky group $\Gamma$ is free and generated by $g = (p - 1)h$ elements.  Therefore, its abelianization $\bar{\Gamma}$ is a free $\zz$-module generated by $(p - 1)h$ elements.  By \Cref{prop basis of Gamma}, the group $\Gamma$ is generated by its elements $\gamma_{i, i', n}$ for $1 \leq i \leq h$ and $1 \leq n \leq p - 1$, so the abelianization $\bar{\Gamma}$ is generated by the images of these $(p - 1)h$ elements.  The result follows.
\end{proof}

Now thanks to the above proposition, the values of the pairing $\langle \cdot, \cdot \rangle_\mon^\varphi$ over the set of generators $\{\gamma_{i, i', n}\}_{1 \leq i \leq h, \, n \in \zz}$ determine the values of the pairing over all of $\bar{\Gamma}$ (where again we are using the same notation $\gamma_{i, i', n}$ for the element of $\Gamma$ as well as its image in the abelianization $\bar{\Gamma}$).  Our main result in this section gives the values of this pairing over this subset.

\begin{thm} \label{thm values of pairing}

With all of the above notation (in particular $v_i, \tilde{v}_i$ for indices $i \neq 0$), for indices $i, j \neq 0$ and integers $m, n \in \zz$, we have 
\begin{equation} \label{eq values of pairing}
v(c_{\gamma_{i, i', m}}(\gamma_{j, j', n})) = 
\begin{cases}
2 \delta(\tilde{v}_i \vee \tilde{v}_j, \bbLambda_{(i')}) &\text{if } i' = j', \ \tilde{v}_i \vee \tilde{v}_j \notin \bbLambda_{(i')}, \text{ and } n \equiv m \ (\text{mod } p) \\
-\epsilon \delta(\tilde{v}_i \vee \tilde{v}_j, \bbLambda_{(i')}) &\text{if } i' = j', \ \tilde{v}_i \vee \tilde{v}_j \notin \bbLambda_{(i')}, \text{ and } n \equiv m \pm 1 \ (\text{mod } p) \\
0 &\text{otherwise,}
\end{cases}
\end{equation}
where $\epsilon = 2$ if $p = 2$ and $\epsilon = 1$ if $p \geq 3$.

\end{thm}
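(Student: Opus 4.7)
The plan is to apply \Cref{thm drinfeld-manin}: writing $\gamma = \gamma_{i,i',m}$ and $\gamma' = \gamma_{j,j',n}$, I pick $v \in L_\gamma$ and $w \in L_{\gamma'}$, so that $[v, \gamma(v)]$ and $[w, \gamma'(w)]$ are single fundamental domains for the respective translations along the axes. The computation then reduces to evaluating the oriented edge intersection $e(\pi_*([v,\gamma(v)]), \pi_*([w,\gamma'(w)]))$ in the quotient graph $\mathcal{T}_\Gamma / \Gamma$.

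The first task is to describe the axis $L_{\gamma_{i,i',m}} \subset \mathcal{T}$. Writing $\gamma_{i,i',m} = s_{i'}^{m-1}(s_i s_{i'}^{-1})s_{i'}^{-(m-1)}$ gives $L_{\gamma_{i,i',m}} = s_{i'}^{m-1}(L_{s_i s_{i'}^{-1}})$, so the crux is the axis of $s_i s_{i'}^{-1}$, a product of two order-$p$ elliptic isometries. The key geometric input is that each order-$p$ element $s_l$ fixes the tubular neighborhood $\bbLambda_{(l)}$ pointwise on $\mathcal{T}$ -- the radius $v(p)/(p-1)$ in the definition of $\bbLambda_{(l)}$ is chosen precisely to ensure this, even when $p$ is the residue characteristic. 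A standard composition-of-elliptic-isometries argument for trees then identifies $L_{s_i s_{i'}^{-1}}$ as the bi-infinite line whose central segment is the geodesic $[\tilde v_{i,i'}, \tilde v_{i',i}]$ joining $\bbLambda_{(i)}$ to $\bbLambda_{(i')}$, with translation amplitude $2\delta(\bbLambda_{(i)}, \bbLambda_{(i')})$. Consequently $L_{\gamma_{i,i',m}}$ crosses $\bbLambda_{(i')}$ entering along the ``$m$-th sector'' and exiting along the ``$(m-1)$-th sector'' at $\bbLambda_{(i')}$ (where the $p$ sectors -- i.e.\ the components of $\mathcal{T}\smallsetminus\bbLambda_{(i')}$ meeting $\bbLambda_{(i')}$ -- are cyclically permuted by $s_{i'}$).

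Next I would analyze how two such axes interact under the projection $\pi_*$. Using the tree structure of $\mathcal I$ together with \Cref{thm disjointness of neighborhoods}, each axis $L_{\gamma_{i,i',m}}$ projects to a loop in $\mathcal T_\Gamma/\Gamma$ whose edges outside the $\bbLambda_{(i')}$-star run once out to (the image of) $\bbLambda_{(i)}$ and back; two such loops share edges only if $i'=j'$, and even then only along the common backtracking path from $\bbLambda_{(i')}$ out to the join $\tilde v_i \vee \tilde v_j$. The sign of any shared edge's contribution is determined by whether the orientations of the two axes along the shared segment agree, which in turn is controlled entirely by the sector labels $m,n \pmod p$.

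This gives the case split. If $i' \neq j'$: the loops meet no common edge (disjoint stars), so $e=0$. If $i'=j'$ but $m \not\equiv n, n\pm 1 \pmod p$: no pair of entry/exit sectors coincides, so no edge is shared, giving $e=0$. If $i'=j'$ and $m \equiv n \pmod p$: both loops enter and leave through the same pair of sectors, so the backtracking path out to $\tilde v_i \vee \tilde v_j$ is traversed in matching orientation by both, giving $+2\delta(\tilde v_i \vee \tilde v_j, \bbLambda_{(i')})$ when $\tilde v_i \vee \tilde v_j \notin \bbLambda_{(i')}$ and $0$ otherwise. If $i'=j'$ and $n \equiv m \pm 1 \pmod p$: the exit sector of one axis matches the entry sector of the other, so the shared segment is traversed with opposite orientation, contributing $-\delta(\tilde v_i \vee \tilde v_j, \bbLambda_{(i')})$; when $p=2$ the congruences $n \equiv m+1$ and $n \equiv m-1$ coincide, and both opposite-orientation overlaps stack, producing the factor $\epsilon=2$. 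The main obstacle will be the first step -- establishing rigorously that $s_l$ fixes all of $\bbLambda_{(l)}$ pointwise in the Bruhat-Tits tree and pinning down $L_{s_i s_{i'}^{-1}}$ explicitly; once this is in hand, the quotient-graph analysis and the case bookkeeping are essentially routine.
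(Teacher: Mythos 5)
Your overall strategy matches the paper's: invoke Theorem~\ref{thm drinfeld-manin}, identify $L_{\gamma_{i,i',m}}$ as $s_{i'}^{m-1}(L_{s_i s_{i'}^{-1}})$ with a fundamental domain $s_{i'}^m(D_{i,i'}) \cup s_{i'}^{m-1}(D_{i,i'})$ (where $D_{i,i'} = [\tilde{v}_{i',i},\tilde{v}_{i,i'}]$), and then do sector bookkeeping. Your description of the axis of $s_i s_{i'}^{-1}$ and the sector-labeling of entry/exit crossings of $\bbLambda_{(i')}$ agrees with what the paper establishes in Lemma~\ref{lemma action of s} and Lemma~\ref{lemma axis}(a), and your sign analysis (same sector $\Rightarrow$ matching orientation, adjacent sector $\Rightarrow$ opposite orientation, doubled contribution when $p=2$) reproduces the paper's case split correctly.

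The gap is in your last sentence, where you assert that ``the quotient-graph analysis and the case bookkeeping are essentially routine.'' They are not. Theorem~\ref{thm drinfeld-manin} computes $v(c_\gamma(\gamma'))$ as an oriented edge-intersection count in the quotient graph $\mathcal{T}_\Gamma/\Gamma$, not in $\mathcal{T}$ itself. To replace the quotient-graph count by the na\"ive intersection $[v,\gamma(v)] \cap [w,\gamma'(w)]$ in $\mathcal{T}$ --- which is what your sector analysis actually computes --- you must show that the projection $\pi_*$ is injective on the interiors of the fundamental domains, i.e.\ that no two interior points of $\bigcup_{i,n} s_{i'}^n(D_{i,i'})$ lie in the same $\Gamma$-orbit. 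Otherwise a $\Gamma$-translate of one fundamental domain could overlap the other in $\mathcal{T}$ and contribute extra shared edges in the quotient, or conversely an apparent Euclidean intersection could be undercounted. Your ``two such loops share edges only if $i'=j'$'' is precisely the statement needing proof, not a consequence of Theorem~\ref{thm disjointness of neighborhoods} alone. The paper supplies this as Lemma~\ref{lemma axis}(b), whose proof depends in an essential way on the optimality of the set $S$ and the structural result Lemma~\ref{lemma off the convex hull special case} (\cite[Lemma~3.16]{yelton2024branch}), which controls where elements of $\Gamma_0$ send points of $\Sigma_S$ lying outside the tubular neighborhoods. Without establishing this injectivity (or an equivalent fundamental-domain property for $\Gamma$ acting on $\mathcal{T}_\Gamma$), the passage from ``intersection in $\mathcal{T}$'' to ``intersection in $\mathcal{T}_\Gamma/\Gamma$'' is unjustified, and the rest of the argument does not go through.
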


In order to prove this theorem, we first need some lemmas.

\begin{lemma} \label{lemma action of s}

For each index $i$ and each integer $n \notin p\zz$, the subspace of $\Hyp$ fixed pointwise by the automorphism $s_i^n$ coincides with $\bbLambda_{(i)}$.  Given any point $\eta \in \Hyp$ and letting $\xi$ be the closest point in $\bbLambda_{(i)}$ to $v$, the automorphism $s_i^n$ maps the path $[\eta, \xi]$ isometrically onto $[s_i^n(\eta), \xi]$, and we have $[\eta, \xi] \cap [s_i^n(\eta), \xi] = \{\xi\}$.

\end{lemma}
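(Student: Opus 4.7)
The plan is to first reduce to the standard form for $s_i$ and then compute directly. By \cite[Proposition 3.2]{van1982galois} cited above, there is a $\PGL_2(K)$-conjugate of $s_i$ represented by a diagonal matrix $\begin{bsmallmatrix} \zeta_p^{n_i} & 0 \\ 0 & 1 \end{bsmallmatrix}$, so after replacing $s_i$ by a conjugate by some $\varphi \in \PGL_2(K)$ (which acts as an isometry on $\Hyp$ and carries $\Lambda_{(i)}, \bbLambda_{(i)}$ to the analogous subspaces for the conjugate), we may assume that $a_i = 0$, $b_i = \infty$, and that $s_i$ acts on $\proj_{\cc_K}^1$ by $z \mapsto \zeta_p^{n_i} z$. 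In this normalization, the axis $\Lambda_{(i)}$ is the path between $\eta_0$ and $\eta_\infty$, consisting of the points $\eta_D$ where $D$ is a disc centered at $0$.

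Next, I will compute the fixed set of $s_i^n$ on $\Hyp$ explicitly. Since $n \notin p\zz$, we have $\zeta_p^{n n_i} \neq 1$ and the standard identity $v(\zeta_p^{n n_i} - 1) = \tfrac{v(p)}{p-1}$ holds. Under the normalization above, the action of $s_i^n$ on a disc $D \subset \cc_K$ with center $c \in \cc_K$ and logarithmic radius $r$ produces the disc with center $\zeta_p^{n n_i} c$ and the same radius $r$. Thus $s_i^n$ fixes $\eta_D$ if and only if $\zeta_p^{n n_i} c \in D$, i.e.\ $v(\zeta_p^{n n_i} c - c) = v(c) + \tfrac{v(p)}{p-1} \geq r$. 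For discs that do not contain $0$, the quantity $r - v(c)$ is exactly $\delta(\eta_D, \Lambda_{(i)})$, so this is the condition $\delta(\eta_D, \Lambda_{(i)}) \leq \tfrac{v(p)}{p-1}$; discs containing $0$ already correspond to points of $\Lambda_{(i)}$, which are automatically fixed. Combining these cases identifies the fixed set of $s_i^n$ in $\Hyp$ with $\bbLambda_{(i)}$.

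For the second assertion, let $\eta \in \Hyp$ and let $\xi$ be the closest point in $\bbLambda_{(i)}$ to $\eta$. The automorphism $s_i^n$ is a self-homeomorphism of $\Hyp$ which is an isometry (see \cite[Proposition 2.30]{baker2010potential}, as referenced earlier in the paper), and by the first part it fixes $\xi$. Hence it sends the non-backtracking path $[\eta, \xi]$ to a non-backtracking path of equal length from $s_i^n(\eta)$ to $\xi$, which by the uniqueness of such paths in $\Berk$ coincides with $[s_i^n(\eta), \xi]$. For the claim $[\eta, \xi] \cap [s_i^n(\eta), \xi] = \{\xi\}$, note that by the uniqueness of the closest point $\xi$ in the connected subspace $\bbLambda_{(i)}$, no interior point of $[\eta, \xi]$ lies in $\bbLambda_{(i)}$, hence none is fixed by $s_i^n$. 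If some $\xi' \in [\eta, \xi] \cap [s_i^n(\eta), \xi]$ were distinct from $\xi$, then $s_i^n(\xi')$ would also lie on $[s_i^n(\eta), \xi]$ at distance $\delta(\xi', \xi)$ from the endpoint $\xi$, forcing $s_i^n(\xi') = \xi'$ and contradicting the preceding observation.

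The main obstacle is the explicit computation of the fixed set in the second step, where one must correctly identify the action of $s_i^n$ on the center and radius of an arbitrary disc and combine this with the ramification identity $v(\zeta_p^{n n_i} - 1) = \tfrac{v(p)}{p-1}$ to match the definition of the tubular neighborhood $\bbLambda_{(i)}$; once this is in hand, the path statements follow formally from the isometric action on the tree $\Hyp$ and the convexity of $\bbLambda_{(i)}$.
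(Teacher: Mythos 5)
Your proof is correct. The second and third assertions (the isometry of paths and the singleton intersection) are handled essentially the same way as in the paper: the paper invokes the metric-preserving property of the $\PGL_2(K)$-action on $\Hyp$ and then notes that the intersection $[\eta, \xi] \cap [s_i^n(\eta), \xi]$ is fixed pointwise by $s_i^n$, hence contained in $\bbLambda_{(i)}$, hence equal to $\{\xi\}$; your contrapositive formulation is the same argument rearranged. The genuine divergence is in the first assertion. The paper proves that the fixed set of $s_i^n$ coincides with $\bbLambda_{(i)}$ simply by citing \cite[Proposition 2.8(b)]{yelton2024branch}, whereas you normalize $s_i$ to the diagonal form $z \mapsto \zeta_p^{n_i} z$ (using \cite[Proposition 3.2]{van1982galois}), compute the action on a general disc $D$ with center $c$ and logarithmic radius $r$, and reduce to the inequality $r - v(c) \leq v(\zeta_p^{nn_i} - 1) = \tfrac{v(p)}{p-1}$, which you then identify with $\delta(\eta_D, \Lambda_{(i)}) \leq \tfrac{v(p)}{p-1}$. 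This is a legitimate direct computation that makes the result self-contained within the paper at the cost of a few extra lines; it implicitly uses that $p \nmid n_i$ (from $m_i n_i \equiv 1 \pmod p$) so that $\zeta_p^{nn_i}$ is a primitive $p$th root of unity and the standard ramification estimate applies, and that the conjugating element can be chosen in $\PGL_2(K)$ since $a_i, b_i \in \proj_K^1$. Both routes are sound; yours trades the external citation for an explicit verification.
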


\begin{proof}
As we have $p \nmid n$, the automorphism $s_i^n$ has order $p$.  The first statement is given by \cite[Proposition 2.8(b)]{yelton2024branch}.  The fact that $s_i^n$ maps the path $[\eta, \xi]$ isometrically onto $[s_i^n(\eta), \xi]$ then follows from the metric-preserving property the action of $\PGL_2(K)$ on $\Hyp$.  Now it follows from that last statement that the intersection $[\eta, \xi] \cap [s_i^n(\eta), \xi]$ is fixed pointwise by $s_i^n$ and thus coincides with $[\eta, \xi] \cap [s_i^n(\eta), \xi] \cap \bbLambda_{(i)}$.  But this intersection consists of only the point $\xi$ by definition of $\xi$, thus proving the final statement.
\end{proof}

\begin{lemma} \label{lemma off the convex hull special case}

Choose any $\eta \in \Sigma_S$ and any nontrivial element $\gamma \in \Gamma_0$, which we write as a product 
\begin{equation} \label{eq word}
\gamma = s_{i_t}^{n_t} s_{i_{t - 1}}^{n_{t - 1}} \cdots s_{i_1}^{n_1}
\end{equation}
 for some $t \geq 1$, some $n_1, \dots, n_t \in \zz \smallsetminus p\zz$, and some indices $i_l$ satisfying $i_l \neq i_{l - 1}$ for $2 \leq l \leq t$.
 
Suppose that we have $\eta \notin \bbLambda_{(i_1)}$.  Then the closest point in $\Sigma_S$ to $\gamma(\eta)$ lies in $\bbLambda_{(i_t)}$, and we have $\gamma(\eta) \notin \Sigma_S$.

\end{lemma}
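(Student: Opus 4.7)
The plan is to induct on the length $t$ of the word, writing $\gamma_l := s_{i_l}^{n_l} \cdots s_{i_1}^{n_1}$ and $\eta_l := \gamma_l(\eta)$ for $0 \le l \le t$, so that $\eta_0 = \eta$ and $\gamma(\eta) = \eta_t$. I will strengthen the inductive statement to: for each $1 \le l \le t$, the point $\eta_l$ is not in $\Sigma_S$, and its closest point in $\Sigma_S$ lies in $\bbLambda_{(i_l)}$.

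For the base case $l = 1$, the hypothesis $\eta \in \Sigma_S \setminus \bbLambda_{(i_1)}$, together with the fact that $\Lambda_{(i_1)} \subset \Sigma_S \cap \bbLambda_{(i_1)}$ and the tree structure of $\Sigma_S$, implies that the non-backtracking path from $\eta$ into $\bbLambda_{(i_1)}$ lies entirely in $\Sigma_S$; in particular, the entry point $\xi_1$ of this path into $\bbLambda_{(i_1)}$ belongs to $\bbLambda_{(i_1)} \cap \Sigma_S$. By \Cref{lemma action of s}, the isometry $s_{i_1}^{n_1}$ maps $[\eta, \xi_1]$ isometrically onto $[\eta_1, \xi_1]$ with $[\eta, \xi_1] \cap [\eta_1, \xi_1] = \{\xi_1\}$, so $\xi_1$ is a candidate for the closest point in $\Sigma_S$ to $\eta_1$. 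The remaining claim---that $\xi_1$ is actually the closest point and that $\eta_1 \notin \Sigma_S$---is a sector argument at $\xi_1$: if $\eta_1$ were in $\Sigma_S$, or if a closer point than $\xi_1$ existed in $\Sigma_S$, then $\Sigma_S$ would contain two distinct tangent directions leaving $\bbLambda_{(i_1)}$ at some common point which are related by a nontrivial power of $s_{i_1}$, and by the description of such tangent directions of $\Sigma_S$ in terms of paths to other tubular neighborhoods $\bbLambda_{(j)}$ via the connecting points $\tilde{v}_{i_1, j}$ from \Cref{prop v_i etc.}, such a relation would violate the disjointness furnished by \Cref{thm disjointness of neighborhoods}.

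For the inductive step, assume the strengthened claim holds for $l - 1$. I first check that $\eta_{l-1} \notin \bbLambda_{(i_l)}$: were $\eta_{l-1}$ in $\bbLambda_{(i_l)}$, then since $\bbLambda_{(i_l)} \cap \Sigma_S \supseteq \Lambda_{(i_l)}$ is nonempty, a standard median argument in the tree $\Hyp$ would place the closest point of $\Sigma_S$ to $\eta_{l-1}$ inside $\bbLambda_{(i_l)}$, contradicting the inductive assumption that this closest point lies in $\bbLambda_{(i_{l-1})}$ (which is disjoint from $\bbLambda_{(i_l)}$ since $i_l \ne i_{l-1}$, by \Cref{thm disjointness of neighborhoods}). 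Letting $\xi_l$ denote the closest point in $\bbLambda_{(i_l)}$ to $\eta_{l-1}$, I next verify that $\xi_l \in \Sigma_S$: the path from $\eta_{l-1}$ to any point of $\Lambda_{(i_l)} \subset \Sigma_S$ first enters $\Sigma_S$ at the projection of $\eta_{l-1}$ onto $\Sigma_S$ (which lies in $\bbLambda_{(i_{l-1})}$ by induction) and then proceeds within $\Sigma_S$ toward $\Lambda_{(i_l)}$, crossing the boundary of $\bbLambda_{(i_l)}$ at a point of $\Sigma_S$ which must coincide with $\xi_l$. Applying \Cref{lemma action of s} to $s_{i_l}^{n_l}$ at $\xi_l$ and repeating the sector argument of the base case then establishes $\eta_l \notin \Sigma_S$ with closest point in $\Sigma_S$ equal to $\xi_l \in \bbLambda_{(i_l)}$, completing the induction.

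The main obstacle is the sector argument, which is the key local geometric content. It requires the fact that at each point $\xi \in \bbLambda_{(i)} \cap \Sigma_S$, the tangent directions of $\Sigma_S$ leaving $\bbLambda_{(i)}$ are identifiable with paths to specific other tubular neighborhoods $\bbLambda_{(j)}$, and the cyclic group $\langle s_i \rangle$ acts nontrivially and without fixed points on these exit directions; consequently, no nontrivial power of $s_i$ can map one tangent direction of $\Sigma_S$ at $\xi$ leaving $\bbLambda_{(i)}$ to another. Once this local fact is in hand, using the structure from \S\ref{sec berk} and \Cref{thm disjointness of neighborhoods}, the remainder of the proof is careful bookkeeping of projections in the tree $\Hyp$.
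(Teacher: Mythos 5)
There is a genuine gap, and it sits precisely where you flagged it: the ``sector argument.'' Your inductive skeleton is sound and, I suspect, close in spirit to how the cited external result is proved, and the bookkeeping in the inductive step (that $\eta_{l-1} \notin \bbLambda_{(i_l)}$ via disjointness, that the entry point $\xi_l$ into $\bbLambda_{(i_l)}$ lies in $\Sigma_S$) is fine. But the local fact you need --- that no nontrivial power of $s_i$ carries one tangent direction of $\Sigma_S$ leaving $\bbLambda_{(i)}$ at a point $\xi$ to another such direction --- is asserted rather than proved, and your proposed justification does not close it. You argue that such a relation ``would violate the disjointness furnished by \Cref{thm disjointness of neighborhoods},'' but \Cref{thm disjointness of neighborhoods} only controls intersections among the original tubular neighborhoods $\bbLambda_{(j)}$; it says nothing about $s_i^{n}(\bbLambda_{(j)})$, which is the tubular neighborhood of the axis of the conjugate $s_i^{n} s_j s_i^{-n}$ and is not one of the $\bbLambda_{(l)}$'s. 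So the putative contradiction never materializes from the tools you cite.

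The missing ingredient is the \emph{optimality} hypothesis on $S$, introduced just before the lemma in \S\ref{sec berk} (``we assume from now on that our set $S$ is optimal and satisfies $b_0 = \infty$'') and referenced in the paper's proof, which is a one-line citation to an external lemma that explicitly requires $S$ optimal. Optimality is exactly the condition engineered to rule out the pathology your sector argument is vulnerable to (multiple exit directions of $\Sigma_S$ at a single boundary point of $\bbLambda_{(i)}$ that collapse under the $\langle s_i \rangle$-action). Without invoking optimality --- or reproducing the nontrivial analysis of exit directions that the external lemma carries out --- the argument cannot be completed. In short: your proof correctly reduces the lemma to the right local statement, but that local statement is the entire content of the cited result and is not a consequence of the ingredients you have assembled.
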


\begin{proof}
This is just a special case of \cite[Lemma 3.16]{yelton2024branch} (which requires that $S$ be optimal).
\end{proof}

\begin{lemma} \label{lemma intersection of fundamental domains}

For any indices $i \neq j$, write $D_{i, j} = [\tilde{v}_{j, i}, \tilde{v}_{i, j}] \subset \Sigma_S$.

\begin{enumerate}[(a)]

\item For any integers $m, n \in \zz$ with $m \not\equiv n$ (mod $p$), the intersection $s_{i'}^m(D_{i, i'}) \cap s_{j'}^n(D_{j, j'})$ is empty or a singleton.

\item If $i' \neq j'$, the intersection $D_{i, i'} \cap D_{j, j'}$ is empty or a singleton consisting of one of the endpoints of $D_{i, i'}$ or $D_{j, j'}$.

\item If $i' = j'$, then the intersection $D_{i, i'} \cap D_{j, j'}$ coincides with the path $[\tilde{v}_{i, i'} \vee \tilde{v}_{j, j'}, \tilde{v}_{i', i}]$ if $\tilde{v}_{i', i} = \tilde{v}_{j', j}$ and is empty otherwise.

\end{enumerate}

\end{lemma}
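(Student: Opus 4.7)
My plan is to prove (c) first, then (b), and finally combine these with Lemma~\ref{lemma action of s} to settle (a). For (c), I exploit the tree-convexity of $\bbLambda_{(i')}$ inside $\Hyp$: since $D_{i,i'}$ and $D_{j,i'}$ are shortest paths to $\bbLambda_{(i')}$, each touches $\bbLambda_{(i')}$ only at its single endpoint ($\tilde{v}_{i',i}$ and $\tilde{v}_{i',j}$, respectively). When these entry points differ, any common point would yield, via path-concatenation and tree path-uniqueness, a path between $\tilde{v}_{i',i}$ and $\tilde{v}_{i',j}$ through it; such a path must lie in the convex sub-tree $\bbLambda_{(i')}$, contradicting the boundary-only intersection. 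When the entry points coincide at $\xi$, both paths emanate from $\xi$ and the intersection is the common initial segment, which tree-median reasoning together with Proposition~\ref{prop v_i etc.}(b) identifies as $[\xi, \hat{v}_i \vee \hat{v}_j] = [\tilde{v}_{i,i'} \vee \tilde{v}_{j,j'}, \tilde{v}_{i',i}]$.

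For (b), I case-split on $|\{i,i'\} \cap \{j,j'\}|$, which under the hypotheses $i \neq j$, $i' \neq j'$ together with the tree structure of $\mathcal{I}$ is either $0$ or $1$. In the disjoint case, $D_{i,i'}$ and $D_{j,j'}$ connect four distinct tubular neighborhoods while by construction avoiding all other $\bbLambda_{(l)}$'s, so they share no point. When $\{i,i'\} \cap \{j,j'\} = \{k\}$, each path touches $\bbLambda_{(k)}$ at a single endpoint (which may or may not coincide) while the bodies head into disjoint regions of $\Hyp$, yielding an intersection of at most a singleton at the possibly shared endpoint.

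For (a) with $m \not\equiv n \pmod p$, if $i' \neq j'$ the argument closely parallels (b), using that $s_{i'}^m$ and $s_{j'}^n$ fix $\bbLambda_{(i')}$ and $\bbLambda_{(j')}$ pointwise, thereby preserving the relevant boundary endpoints. The crucial case is $i' = j'$ with $\tilde{v}_{i',i} = \tilde{v}_{j',j} = \xi$: applying $s_{i'}^{-m}$ reduces the problem to $D_{i,i'} \cap s_{i'}^k(D_{j,i'})$ with $k := n - m \not\equiv 0 \pmod p$. Both paths now emanate from $\xi$, and Lemma~\ref{lemma action of s} guarantees that $s_{i'}^k$ permutes the sub-branches of $\Hyp \smallsetminus \bbLambda_{(i')}$ attached at $\xi$ without fixed points; the intersection is thus at most $\{\xi\}$, provided the sub-branches containing $D_{i,i'}$ and $s_{i'}^k(D_{j,i'})$ are distinct.

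The main obstacle is verifying this last sub-branch distinctness. The key fact is that $\Gamma_0$ is generated by the $s_l$'s with the only relations being $s_l^p = 1$, so it is a free product of cyclic groups of order $p$ in which distinct generators $s_i, s_l$ are non-conjugate (by an abelianization argument); hence for $k \not\equiv 0 \pmod p$, the conjugate $s_{i'}^{-k} s_i s_{i'}^k$ is neither $s_i$ nor any other $s_l$. Consequently $s_{i'}^{-k}(\bbLambda_{(i)})$ coincides with no original $\bbLambda_{(l)}$, so $\bbLambda_{(i)}$ (which sits inside the sub-branch containing $D_{i,i'}$) cannot lie inside the rotated sub-branch $s_{i'}^k(\beta_j)$ whose tubular-neighborhood content consists only of non-original rotated copies. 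This forces the sub-branches to be distinct and completes (a).
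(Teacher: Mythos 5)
Your treatment of parts (b) and (c) follows essentially the same reasoning as the paper (uniqueness of closest points in a tree, together with the ordering structure), and is fine modulo the usual amount of detail-filling. Your treatment of part (a), however, has a genuine gap, and it sits exactly at the step you flagged as the main obstacle. After reducing to $D_{i,i'} \cap s_{i'}^k(D_{j,i'})$ with both paths emanating from $\xi := \tilde{v}_{i',i} = \tilde{v}_{i',j}$, you need that they leave $\xi$ into different branches. \Cref{lemma action of s} only tells you that $s_{i'}^k$ moves the branch containing $D_{j,i'}$ off \emph{itself}; when $D_{i,i'}$ and $D_{j,i'}$ share a nontrivial initial segment (the case $\tilde{v}_{i,i'} \vee \tilde{v}_{j,j'} < \xi$) they occupy a common branch and that lemma already suffices, but when they diverge immediately at $\xi$ (i.e. $\tilde{v}_{i,i'} \vee \tilde{v}_{j,j'} = \xi$, which does occur), nothing you cite prevents $s_{i'}^k$ from sending the branch of $D_{j,i'}$ onto the branch of $D_{i,i'}$.

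The group-theoretic patch does not close this. Granting that $s_{i'}^{-k}s_is_{i'}^k$ is not equal to any $s_l$ (which, as you say, follows from the free-product structure plus abelianization, after also observing that equal axes force commutation and hence membership in a common $\langle s_l\rangle$), you may deduce $s_{i'}^{-k}(\bbLambda_{(i)}) \neq \bbLambda_{(l)}$ for all $l$. But the inference that the rotated branch ``consists only of non-original rotated copies'' is false: the branches at $\xi$ are just the connected components of $\Hyp \smallsetminus \bbLambda_{(i')}$, the action of $s_{i'}^k$ merely \emph{permutes} them, and a component hit by the permutation is perfectly free to contain any number of the original $\bbLambda_{(l)}$. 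Knowing that $s_{i'}^{-k}(\bbLambda_{(i)})$ is not itself one of the $\bbLambda_{(l)}$ says nothing about which component of $\Hyp \smallsetminus \bbLambda_{(i')}$ it lies in. The ingredient you are missing is the convex hull $\Sigma_S$: since $D_{i,i'} \subset \Sigma_S$, and \Cref{lemma off the convex hull special case} (applied to $\gamma = s_{i'}^{-k}$ and any $\eta \in D_{i,i'} \smallsetminus \{\xi\}$, which lies in $\Sigma_S$ but not in $\bbLambda_{(i')}$) shows that $s_{i'}^{-k}(\eta) \notin \Sigma_S$, one gets directly that $s_{i'}^{k}(D_{j,i'} \smallsetminus \{\xi\}) \cap D_{i,i'} = \varnothing$. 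This is exactly the route the paper takes — it shows $s_{j'}^n(D_{j,j'}) \cap \Sigma_S = \{\tilde{v}_{j',j}\}$ for $p \nmid n$ and then reduces the general $m$ to $m = 0$ — and it simultaneously handles the $i' \neq j'$ sub-case, which in your sketch is waved away as ``closely paralleling (b)'' even though the translated paths are no longer inside $\Sigma_S$.
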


\begin{proof}
We have $D_{i, i'} \subset \Sigma_S$.  Now an application of either \Cref{lemma action of s} or \Cref{lemma off the convex hull special case} gives us $s_{j'}^n(D_{j, j'}) \cap \Sigma_S = \{\tilde{v}_{j', j}\}$ as long as $p \nmid n$.  This proves part (a) when $m = 0$.  The result follows for general $m$ after applying the automorphism $s_{i'}^m$ to the case where $m$ and $n$ are respectively resplaced by $0$ and $n - m$.

Note that one endpoint of the path $D_{i, i'}$ is greater than the other under the relation $>$ (that is, we have $\tilde{v}_{i', i} > \tilde{v}_{i, i'}$), and the same is true of the path $D_{j, j'}$.  Applying \Cref{rmk ordering}, it follows that if the paths $D_{i, i'}$ and $D_{j, j'}$ are not disjoint, their intersection is a path $[v, w]$ with $v \geq w$ and where $v \in \{\tilde{v}_{i', i}, \tilde{v}_{j', j}\}$.  Suppose that the path $[v, w]$ is not a singleton, or equivalently, that we have $v > w$.  Then as $v, w \in D_{i, i'}$, we get that $v$ is not the lower endpoint $\tilde{v}_{i, i'}$ of $D_{i, i'}$.  By construction of the index $i'$ from $i$, for every index $l \neq i'$, we have $(D_{i, i'} \smallsetminus \{\tilde{v}_{i, i'}\}) \cap \bbLambda_{(l)} = \varnothing$, and so we have $v \notin \bbLambda_{(l)}$ for any $l \neq i'$.  Then we get $v = \tilde{v}_{j', j} \in \bbLambda_{(j')}$ only if $j' = i'$.  By a similar argument, we get $v = \tilde{v}_{i', i}$ only if $i' = j'$.  It follows that we have $i' = j'$ under the assumption that $[v, w]$ is not a singleton, proving part (b).

Conversely, assume that $i' = j'$.  From what was observed above about the structure of the intersection $D_{i, i'} \cap D_{j, j'}$, the assertion of part (b) in the case of $\tilde{v}_{i', i} = \tilde{v}_{j', j}$ is now obvious.  If $i' = j'$ and $\tilde{v}_{i', i} \neq \tilde{v}_{j', j}$, then, as each point $\tilde{v}_{i', i}, \tilde{v}_{j', j}$ is the closest point in $\bbLambda_{(i')}$ to the respective endpoint $\tilde{v}_{i, i'}, \tilde{v}_{j, j'}$, it is clear that we have $D_{i, i'} \cap D_{j, j'} = \varnothing$, completing the proof of part (c).
\end{proof}

\begin{lemma} \label{lemma axis}

Choose indices $i, j \neq 0$, and adopt the notation of \Cref{lemma intersection of fundamental domains}.

\begin{enumerate}[(a)]

\item Fix an integer $n \in \zz$.  A fundamental domain of the axis $L_{\gamma_{i, j, n}}$ associated to the element $\gamma_{i, j, n} \in \Gamma$ with respect to the action of that element is given by the (non-backtracking) path $[s_j^{n}(\tilde{v}_{i, j}), s_j^{n-1}(\tilde{v}_{i, j})] = [s_j^n(\tilde{v}_{i, j}), \tilde{v}_{j, i}] \cup [\tilde{v}_{j, i}, s_j^{n-1}(\tilde{v}_{i, j})] = s_j^n(D_{i, j}) \cup s_j^{n-1}(D_{i, j})$; this is a path of length $2\delta(\tilde{v}_{i, j}, \tilde{v}_{j, i}) = 2\delta(\bbLambda_{(i)}, \bbLambda_{(j)})$.

\item No two points in the interior of the subspace $\mathfrak{D} := \bigcup_{1 \leq i \leq h, \, n \in \zz} s_{i'}^n(D_{i, i'}) \subset \Hyp$ are equivalent modulo the action of the group $\Gamma$.  In particular, for a given $i \neq 0$, no two points in the interior of the fundamental domain given by part (a) lie in the same orbit under $\Gamma$.

\end{enumerate}

\end{lemma}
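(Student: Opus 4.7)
For part (a), the plan is a direct computation leveraging Lemma 4.4 and Theorem 3.1. First, the identity
\[
\gamma_{i,j,n}(s_j^n(\tilde{v}_{i,j})) = s_j^{n-1}s_is_j^{-n}\cdot s_j^n(\tilde{v}_{i,j}) = s_j^{n-1}s_i(\tilde{v}_{i,j}) = s_j^{n-1}(\tilde{v}_{i,j})
\]
follows because $\tilde{v}_{i,j}\in\bbLambda_{(i)}$ is fixed pointwise by $s_i$. Next, applying the isometry $s_j^{1-n}$ reduces the claim that $[s_j^n(\tilde{v}_{i,j}), s_j^{n-1}(\tilde{v}_{i,j})]$ is a non-backtracking path of length $2\delta(\tilde{v}_{i,j},\tilde{v}_{j,i})$ through $\tilde{v}_{j,i}$ to the corresponding claim for $[s_j(\tilde{v}_{i,j}),\tilde{v}_{i,j}]$. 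Since $\tilde{v}_{i,j}\notin\bbLambda_{(j)}$ by Theorem 3.1 and its closest point in $\bbLambda_{(j)}$ is $\tilde{v}_{j,i}$, Lemma 4.4 gives that $s_j$ sends $[\tilde{v}_{i,j},\tilde{v}_{j,i}]$ isometrically onto $[s_j(\tilde{v}_{i,j}),\tilde{v}_{j,i}]$ with intersection $\{\tilde{v}_{j,i}\}$; concatenating yields the desired path. Finally, to confirm this is a fundamental domain on the axis, I would iterate the $\gamma_{i,j,n}^{\pm 1}$-action. The same reflection principle at each step produces a bi-infinite non-backtracking path on which $\gamma_{i,j,n}$ acts by translation of length $2\delta(\tilde{v}_{i,j},\tilde{v}_{j,i})$; by uniqueness of the axis of a loxodromic element, this must equal $L_{\gamma_{i,j,n}}$.

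For part (b), the plan is a Schottky/ping-pong-style disjointness argument. I would first describe $\mathfrak{D}$ as a graph whose edges are the petals $s_{i'}^n(D_{i,i'})$ (indexed by $1\leq i\leq h$ and $n$ modulo $p$) and whose vertices are the shared endpoints $\tilde{v}_{i',i}$ together with the outer endpoints $s_{i'}^n(\tilde{v}_{i,i'})$; by Lemma 4.5, distinct petals meet only at such vertices. Suppose for contradiction that some $\gamma\in\Gamma\setminus\{1\}$ sends an interior point $v$ of a petal $P_1:=s_{i'}^n(D_{i,i'})$ to an interior point of a petal $P_2:=s_{j'}^m(D_{j,j'})$. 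Since $\gamma$ is a tree isometry carrying $\mathfrak{D}$-neighborhoods of $v$ into $\mathfrak{D}$-neighborhoods of $\gamma(v)$, it must map $P_1$ onto $P_2$ setwise. Using part (a) applied to the axes $L_{\gamma_{i,i',n}}$ and $L_{\gamma_{j,j',m}}$, the setwise identification $\gamma(P_1)=P_2$ constrains $\gamma$ up to the cyclic stabilizers $\langle s_{i'}\rangle$ and $\langle s_{j'}\rangle$ of the endpoints. Writing $\gamma$ in reduced form with respect to the free generating set $\{\gamma_{i,i',n}\}$ of $\Gamma$ (\Cref{cor basis of barGamma}) and comparing against the admissible forms of elements in this constrained coset then forces $\gamma=1$, contradicting our assumption.

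The main obstacle will be the bookkeeping in part (b): carefully enumerating which pairs $(i,n,j,m)$ could conceivably admit a setwise identification $\gamma(P_1)=P_2$, and verifying that each such $\gamma$ either lies outside $\Gamma\subset\Gamma_0$ or is trivial. The explicit fundamental-domain description from part (a), combined with the intersection constraints on petals from Lemma 4.5, are exactly the inputs that make this case analysis tractable; the freeness of $\Gamma$ on $\{\gamma_{i,i',n}\}$ ultimately rules out every nontrivial candidate.
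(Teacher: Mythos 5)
Your part (a) is essentially correct and close in spirit to the paper's argument: both establish that the concatenation $s_j^n(D_{i,j}) \cup s_j^{n-1}(D_{i,j})$ is a non-backtracking path via \Cref{lemma action of s} and then show it generates the axis. The paper pins down the axis via a projection/midpoint argument (the midpoint $\tilde{v}_{j,i}$ is equidistant to the endpoints, which are equidistant to $L_\gamma$, hence lies on $L_\gamma$), whereas you iterate the reflection and invoke uniqueness of the axis; both are valid routes, though your "same reflection principle at each step" would require re-applying \Cref{lemma action of s} with appropriate conjugations to justify non-backtracking at every junction $\gamma^k(s_j^n(\tilde{v}_{i,j}))$.

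Part (b), however, has a real gap. Your pivotal step is the claim that if $\gamma\in\Gamma\setminus\{1\}$ sends an interior point of petal $P_1$ to an interior point of petal $P_2$, then $\gamma$ "must map $P_1$ onto $P_2$ setwise" because "$\gamma$ is a tree isometry carrying $\mathfrak{D}$-neighborhoods of $v$ into $\mathfrak{D}$-neighborhoods of $\gamma(v)$." This is not justified: $\gamma$ is an isometry of all of $\Hyp$ (equivalently of the tree), but it does not preserve $\mathfrak{D}$, so there is no reason $\gamma(P_1)\subset\mathfrak{D}$, let alone $\gamma(P_1)=P_2$. The assertion that $\Gamma$-translates of petals are pairwise either disjoint or equal is precisely the content of the fundamental-domain property that part (b) is trying to establish, so invoking it here is circular. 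The paper's proof breaks this circularity by passing to the group $\Gamma_0$ and invoking \Cref{lemma off the convex hull special case} (which itself rests on the \emph{optimality} of the set $S$, a hypothesis you never touch): for $\eta$ in the interior of $D_{i,i'}$ (hence $\eta\in\Sigma_S\setminus\bigcup_l\bbLambda_{(l)}$) and any nontrivial $\delta\in\Gamma_0$ whose reduced word does not start with a power of $s_{i'}$, one has $\delta(\eta)\notin\Sigma_S$. Applying this to $\delta = s_{j'}^{-n}\gamma s_{i'}^{m}$ forces $\delta=1$, hence $\gamma = s_{j'}^n s_{i'}^{-m}$, and \Cref{lemma intersection of fundamental domains}(c) plus $\Gamma\cap\langle s_{i'}\rangle=\{1\}$ finishes. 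Your appeal to freeness of $\Gamma$ alone cannot replace this geometric input; freeness holds for any Schottky group but the disjointness of the petal interiors depends on the specific placement of $S$ guaranteed by optimality.
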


\begin{proof}
Let us first prove part (a) in the $n = 1$ case.  For brevity of notation, (only) in the proof of part (a) we write $\gamma$ for $\gamma_{i, j, 1}$; we then have by definition $\gamma = s_i s_j^{-1}$.  Now, applying \Cref{lemma action of s}, we have $\gamma(s_j(\tilde{v}_{i, j})) = s_i(\tilde{v}_{i, j}) = \tilde{v}_{i, j}$ and $[s_j(\tilde{v}_{i, j}), \tilde{v}_{i, j}] = [s_j(\tilde{v}_{i, j}), \tilde{v}_{j, i}] \cup [\tilde{v}_{j, i}, \tilde{v}_{i, j}]$ with $\delta(s_j(\tilde{v}_{i, j}), \tilde{v}_{j, i}) = \delta(\tilde{v}_{j, i}, \tilde{v}_{i, j})$.  The length of the path $[s_j(\tilde{v}_{i, j}), \tilde{v}_{i, j})]$ then equals $2\delta(\tilde{v}_{j, i}, \tilde{v}_{i, j})$.

Now let $L_\gamma$ be the axis associated to $\gamma$, and let $\xi$ (resp. $\xi'$) be the closest point in $L_\gamma$ to $s_j(\tilde{v}_{i, j})$ (resp. $\tilde{v}_{i, j}$).  Now we have $\gamma(\xi) \in L_\gamma$ and 
\begin{equation}
\delta(\tilde{v}_{i, j}, \xi') \leq \delta(\tilde{v}_{i, j}, \gamma(\xi)) = \delta(\gamma(s_j(\tilde{v}_{i, j})), \gamma(\xi)) = \delta(s_j(\tilde{v}_{i, j}), \xi).
\end{equation}
By a symmetric argument, we get the opposite inequality, so we have $\delta(s_j(\tilde{v}_{i, j}), \xi) = \delta(\tilde{v}_{i, j}, \xi')$ (and in fact $\xi' = \gamma(\xi)$).  It follows that the point $\tilde{v}_{j, i}$, being the midpoint of the path $[s_j(\tilde{v}_{i, j}), \tilde{v}_{i, j}]$, lies in the axis $L_\gamma$.  Now, applying \Cref{lemma action of s}, we have $\gamma(\tilde{v}_{j, i}) = s_i(s_j^{-1}(\tilde{v}_{j, i})) = s_i(\tilde{v}_{j, i})$ and $[\tilde{v}_{j, i}, s_i(\tilde{v}_{j, i})] = [\tilde{v}_{j, i}, \tilde{v}_{i, j}] \cup [\tilde{v}_{i, j}, s_i(\tilde{v}_{j, i})]$ with $\delta(\tilde{v}_{j, i}, \tilde{v}_{i, j}) = \delta(\tilde{v}_{i, j}, s_i(\tilde{v}_{j, i}))$.  The length of the path $[\tilde{v}_{j, i}, s_i(\tilde{v}_{j, i})] \subset L_\gamma$ then equals $2\delta(\tilde{v}_{j, i}, \tilde{v}_{i, j}) = [s_j(\tilde{v}_{i, j}), \gamma(s_j(\tilde{v}_{i, j}))]$.  This shows that the amplitude of the loxodromic element $\gamma$ equals $2\delta(\tilde{v}_{j, i}, \tilde{v}_{i, j})$ and that we have $s_j(\tilde{v}_{i, j}) \in L_\gamma$, so that the path 
\begin{equation}
[s_j(\tilde{v}_{i, j}), \gamma(s_j(\tilde{v}_{i, j}))] = [s_j(\tilde{v}_{i, j}), \tilde{v}_{i, j}] = [s_j(\tilde{v}_{i, j}), \tilde{v}_{j, i}] \cup [\tilde{v}_{j, i}, \tilde{v}_{i, j}]
\end{equation}
is a fundamental domain of $L_\gamma$ under the action of $\gamma$.  Now the assertions of part (a) for general $n$ follow by considering that $\gamma_{i, j, n} = s_j^{n-1} \gamma s_j^{1-n}$, so the axis of $\gamma_{i, j, n}$ must be $s_j^{n-1}(L_\gamma)$ and a fundamental domain for $\gamma_{i, j, n}$ in this axis for $\gamma_{i, j, n}$ is given by the image under $s_j^{n-1}$ of any fundamental domain for $\gamma$ in the axis $L_\gamma$.  Part (a) is proved.

Now choose any point $\eta \in \mathfrak{D} \smallsetminus \{\tilde{v}_{i, i'}, \tilde{v}_{i', i}\}_{1 \leq i \leq h}$, and choose any element $\gamma \in \Gamma \smallsetminus \{1\}$.  Suppose that we have $\eta \in s_{i'}^m(D_{i, i'})$ and $\gamma(\eta) \in s_{j'}^n(D_{j, j'})$ for some indices $i, j$ and some integers $m, n \in \zz$.  Then we have $s_{i'}^{-m}(\eta) \in D_{i, i'} \subset \Sigma_S$ and $[s_{j'}^{-n} \gamma s_{i'}^m](s_{i'}^{-m}(\eta)) \in D_{j, j'} \subset \Sigma_S$.  Now as the point $\eta$ is not an endpoint of the path $s_{i'}^m(D_{i, i'})$, the point $s_{i'}^{-m}(\eta)$ is not an endpoint of the path $D_{i, i'}$.  By construction of the index $i'$ from $i$, the interior of the path $D_{i, i'}$ is disjoint from all subspaces $\bbLambda_{(l)} \subset \Hyp$.  In particular, the hypothesis of \Cref{lemma off the convex hull special case} holds for the point $s_{i'}^{-m}(\eta) \in \Sigma_S$ and element $s_{j'}^{-n} \gamma s_{i'}^m \in \Gamma_0$ provided that $s_{j'}^{-n} \gamma s_{i'}^m \neq 1$, in which case the lemma tells us that $[s_{j'}^{-n} \gamma s_{i'}^m](s_{i'}^{-m}(\eta)) \in \Sigma_S$ is a contradiction.  Therefore, we must have $s_{j'}^{-n} \gamma s_{i'}^m = 1$.  Then we have $s_{i'}^{-m}(\eta) = [s_{j'}^{-n} \gamma s_{i'}^m](s_{i'}^{-m}(\eta)) \in D_{j, j'}$, so that in fact we have $s_{i'}^{-m}(\eta) \in D_{i, i'} \cap D_{j, j'}$ (with $s_{i'}^{-m}(\eta)$ not an endpoint of either $D_{i, i'}$ or $D_{j, j'}$).  Now \Cref{lemma intersection of fundamental domains}(c) implies that this is a contradiction unless $j' = i'$.  Then $s_{j'}^{-n} \gamma s_{i'}^m = 1$ implies $\gamma = s_{i'}^{n-m}$.  As $\gamma$ was chosen to be an element of $\Gamma \smallsetminus \{1\}$, and the only power of $s_{i'}$ in $\Gamma$ is $1$, this is a contradiction.  We have thus shown that no point of $\mathfrak{D} \smallsetminus \{\tilde{v}_{i, i'}, \tilde{v}_{i', i}\}_{1 \leq i \leq h}$ lies in the orbit of any other point in $\mathfrak{D}$.

To complete the proof of part (b) it remains only to show that the points $\tilde{v}_{i, i'}, \tilde{v}_{j, j'} \in \Sigma_S$ for indices $i \neq j$ are not in the same orbit.  Suppose there is some $\gamma \in \Gamma \smallsetminus \{1\}$, which we express as a word using the notation in (\ref{eq word}), such that $\gamma(\tilde{v}_{i', i}) = \tilde{v}_{j', j}$.  If we have $i_1 = i'$, we replace $\gamma$ with $\gamma s_{i_1}^{-n_1} = \gamma s_{i'}^{-n_1} \in \Gamma_0 \smallsetminus \{1\}$, noting that we still have $\gamma(\tilde{v}_{i', i}) = \tilde{v}_{j', j}$.  Then the hypotheses of \Cref{lemma off the convex hull special case} apply to $\eta$ and $\gamma$, and that lemma tells us that we have $\gamma(\tilde{v}_{i', i}) \notin \Sigma_S$, a contradiction.
\end{proof}

Now it is reasonably straightforward to prove \Cref{thm values of pairing}.

\begin{proof}[Proof (of \Cref{thm values of pairing})]
Given generatoring elements $\gamma_{i, i', m}, \gamma_{j, j', n} \in \Gamma$, we proceed to compute the value of $e(\pi_*(v, \gamma_{i, i', m}(v)), \pi_*(w, \gamma_{j, j', n}(w)))$, which by \Cref{thm drinfeld-manin} equals $v(c_{\gamma_{i, i', m}}(\gamma_{j, j', n}))$.

Choose indices $i, j \neq 0$ and integers $m, n \in \zz$, and let $v = s_{i'}^{n-1}(\tilde{v}_{i, i'}) \in \Lambda_{\gamma_{i, i', m}}$ and $w = s_{j'}^{m-1}(\tilde{v}_{j, j'}) \in \Lambda_{\gamma_{j, j', n}}$.  Now by \Cref{lemma axis}(a), the paths 
\begin{equation} \label{eq fundamental domains}
[v, \gamma_{i, i', m}(v)] = s_{i'}^m(D_{i, i'}) \cup s_{i'}^{m-1}(D_{i, i'}) \subset L_{\gamma_{i, i', m}}, \ [w, \gamma_{j, j', n}(w)] = s_{j'}^m(D_{j, j'}) \cup s_{j'}^{m-1}(D_{j, j'}) \subset L_{\gamma_{j, j', n}}
\end{equation}
 are respectively fundamental domains for the elements $\gamma_{i, i', m}, \gamma_{j, j', n} \in \Gamma$ (using the notation of that lemma).  Meanwhile, by \Cref{lemma axis}(b), the points in the interior of their union $[v, \gamma_{i, i', m}(v)] \cup [w, \gamma_{j, j', n}(w)]$ lie in pairwise distinct orbits under the action of $\Gamma$.  It follows that the integer $e(\pi_*([v, \gamma(v)]), \pi_*([w, \gamma'(w)]))$ equals the length of the intersection $[v, \gamma_{i, i', m}(v)] \cap [w, \gamma_{j, j', n}(w)]$ of these oriented paths, now considered as paths in the graph $\mathcal{T}_\Gamma$, signed according to whether they are going in the same or opposite directions along their intersection, which we write as 
\begin{equation} \label{eq intersection of fundamental domains}
(s_{i'}^m(D_{i, i'}) \cup s_{i'}^{m-1}(D_{i, i'})) \cap (s_{j'}^m(D_{j, j'}) \cup s_{j'}^{m-1}(D_{j, j'})) = \bigcup_{\substack{s \in \{m, m-1\} \\ t \in \{n, n-1\}}} (s_{i'}^s(D_{i, i'}) \cap s_{j'}^t(D_{j, j'})).
\end{equation}
By part (a) (resp. (b); resp. (c)) of \Cref{lemma intersection of fundamental domains}, the length of the intersection $s_{i'}^s(D_{i, i'}) \cap s_{j'}^t(D_{j, j'})$ equals $0$ if $s \not\equiv t$ (mod $p$) (resp. if $i' \neq j'$; resp. if $\tilde{v}_{i', i} \neq \tilde{v}_{j', j}$ -- here we have applied parts (b) and (c) by reducing to the $s = 0$ case and then applying the automorphism $s_{i'}^s$).  We are therefore done with the proof in all of those cases.  Now if $s \equiv t$ (mod $p$) for one of the sets in the union in (\ref{eq intersection of fundamental domains}), we have that $n$ is equivalent to $m$ or $m \pm 1$ modulo $p$; when this condition on $m$ and $n$ does not hold, the lengths of all the intersections in the union in (\ref{eq intersection of fundamental domains}) are $0$, and again we are done.

We assume from now on that we have $i' = j'$ and $\tilde{v}_{i', i} = \tilde{v}_{j', j}$.  Suppose we have $m \equiv n$ (mod $p$).  Then by \Cref{lemma intersection of fundamental domains}(c), we have that the length of $D_{i, i'} \cap D_{j, j'}$ equals $\delta(\tilde{v}_{i, i'} \vee \tilde{v}_{j, j'}, \tilde{v}_{i', i}) = \delta(\tilde{v}_{i, i'} \vee \tilde{v}_{j, j'}, \bbLambda_{(i')})$.  As the automorphisms $s_{i'}^m = s_{j'}^n$ and $s_{i'}^{m-1} = s_{j'}^{n-1}$ are metric-preserving homeomorphisms, this is also the length of $s_{i'}^m(D_{i, i'}) \cap s_{j'}^n(D_{j, j'})$ and $s_{i'}^{m-1}(D_{i, i'}) \cap s_{j'}^{n-1}(D_{j, j'})$.  One checks that the paths $s_{i'}^m(D_{i, i'}), s_{j'}^n(D_{j, j'})$ (resp. $s_{i'}^{m-1}(D_{i, i'}), s_{j'}^{n-1}(D_{j, j'})$) (as sub-paths of $[v, \gamma_{i, i', m}(v)]$ and $[w, \gamma_{j, j', n}(w)]$ as in (\ref{eq fundamental domains})) are oriented to end (resp. begin) at the point $\tilde{v}_{i', i} = \tilde{v}_{j', j}$.  Thus, the total length we are computing equals $2\delta(\tilde{v}_{i, i'} \vee \tilde{v}_{j, j'}, \bbLambda_{(i')})$.

Now suppose instead that we have $n \equiv m + 1$ (mod $p$).  Then the only set in the union in (\ref{eq intersection of fundamental domains}) for which $s \equiv t$ (mod $p$) is $s_{i'}^m(D_{i, i'}) \cap s_{j'}^{n-1}(D_{j, j'})$, unless $p = 2$, in which case $s_{i'}^{m-1}(D_{i, i'}) \cap s_{j'}^n(D_{j, j'})$ is also such a set.  By essentially the same use of \Cref{lemma intersection of fundamental domains}(c) as above, the length of this intersection (or of each of these intersections in the $p = 2$ case) equals $\delta(\tilde{v}_{i, i'} \vee \tilde{v}_{j, j'}, \bbLambda_{(i')})$.  But this time the orientations are different: one checks that the path $s_{i'}^m(D_{i, i'}) \subset [v, \gamma_{i, i', m}(v)]$ ends at $\tilde{v}_{i', i}$, while the path $s_{j'}^{n-1}(D_{j, j'}) \subset [w, \gamma_{j, j', n}(w)]$ begins at that point, and thus the length of their intersection is counted as $-\delta(\tilde{v}_{i, i'} \vee \tilde{v}_{j, j'}, \bbLambda_{(i')})$.  The symmetric argument yields the same signed intersection length for the paths $s_{i'}^{m-1}(D_{i, i'})$ and $s_{j'}^n(D_{j, j'})$, which appear when $n \equiv m - 1$ (mod $p$) (including when $n \equiv m + 1$ (mod $p$) and $p = 2$).  Thus in both cases of $n \equiv m \pm 1$ (mod $p$), the total length we are computing equals $-2\delta(\tilde{v}_{i, i'} \vee \tilde{v}_{j, j'}, \bbLambda_{(i')})$ if $p = 2$ and $-\delta(\tilde{v}_{i, i'} \vee \tilde{v}_{j, j'}, \bbLambda_{(i')})$ otherwise.  This completes the proof of all cases of the formula stated in the theorem.
\end{proof}

\section{The monodromy pairing and the canonical principal polarization} \label{sec monodromy}

Fix a prime $\ell$.  Let $U_\ell \subset \Aut(T_\ell(J))$ be the subset of automorphisms $u$ satisfying that the kernel (resp. image) of $u - 1 \in \End(T_\ell(J))$ contains (resp. is contained in) the submodule $\mathbb{T}_\ell \subset T_\ell(J)$ which was defined in \S\ref{sec background pairings}.

For any automorphism $u \in U_\ell \subset \Aut(T_\ell(J))$, we construct a pairing 
\begin{equation} \label{eq pairing from u}
\langle \cdot, \cdot \rangle_{u, \ell} : \Lambda \otimes \zz_\ell \times \Lambda^\vee \otimes \zz_\ell \to \zz_\ell
\end{equation}
 as follows.  The homomorphism $u - 1$, being trivial on $\mathbb{T}_\ell$, induces a homomorphism 
\begin{equation}
\mathfrak{u} : T_\ell(J) / \mathbb{T}_\ell \to \mathbb{T}_\ell.
\end{equation}
Meanwhile, in \S\ref{sec background pairings}, we defined a map $\phi_\ell: T_\ell(J) \to \Lambda \otimes \zz_\ell$ whose kernel coincides with $\mathbb{T}_\ell$, which allows us to identify $T_\ell(J) / \mathbb{T}_\ell$ with $\Lambda \otimes \zz_\ell$.  On the other side, the fact that $T$ may be identified as the character group $\Hom(\Lambda^\vee, \ggr_m)$ (see \S\ref{sec background pairings} and consider that $J$ is the dual of $J^\vee$) allows us to identify $\mathbb{T}_\ell$ with 
\begin{equation} \label{eq identifications for T_ell(T)}
\lim_{\leftarrow n} T[\ell^n] = \lim_{\leftarrow n} \Hom(\Lambda^\vee, \ggr_m)[\ell^n] = \lim_{\leftarrow n} \Hom(\Lambda^\vee / [\ell^n]\Lambda^\vee, \boldsymbol{\mu}_{\ell^n}) = \Hom(\Lambda^\vee \otimes \zz_\ell, T_\ell(\bar{K}^\times)).
\end{equation}

In light of these identifications, we view $\mathfrak{u}$ as a homomorphism $\Lambda \otimes \zz_\ell \to \Hom(\Lambda^\vee \otimes \zz_\ell, T_\ell(\bar{K}^\times)) \cong \Hom(\Lambda^\vee \otimes \zz_\ell, \zz_\ell)$, and this defines a pairing $\langle \cdot, \cdot \rangle_{u, \ell}$ as in (\ref{eq pairing from u}).

\begin{rmk} \label{rmk pairing from u}

We make the following observations about the pairing $\langle \cdot, \cdot \rangle_{u, \ell}$ defined above.

\begin{enumerate}[(a)]

\item On checking that we have $u u' - 1 = (u - 1) + (u' - 1)$ for $u, u' \in U_\ell$, it is easy to verify that the subset $U_\ell \subset \Aut(T_\ell(J))$ is a subgroup and that the mapping $u \mapsto \langle \cdot, \cdot \rangle_{u, \ell}$ is an injective homomorphism from $U_\ell$ to the (additive) group of pairings $\Lambda \otimes \zz_\ell \times \Lambda^\vee \otimes \zz_\ell \to \zz_\ell$.

\item Suppose that $\ell$ is not the residue characteristic of $K$, and let $\sigma \in I_K$ be a Galois automorphism sending $\pi^{1/\ell^n}$ to $\zeta_{\ell^n}\pi^{1/\ell^n}$ for $n \geq 1$.  One sees by tracing the construction of the pairing $\langle \cdot, \cdot \rangle_\ell : M_\ell \times M_\ell^\vee \to \zz_\ell$ defined in \cite[equation (3.6)]{papikian2013non}, verifying from the definitions of $M_\ell$ and $M_\ell^\vee$ in \cite[\S3]{papikian2013non} that they can be identified with $\Lambda^\vee \times \zz_\ell$ and $\Lambda \times \zz_\ell$ respectively, that that pairing is the same as the pairing $\langle \cdot, \cdot \rangle_{\rho_\ell(\sigma), \ell}$ as we have defined it above.  It follows from \cite[Theorem 3.3]{papikian2013non} that there is a unique pairing $\Lambda \times \Lambda^\vee \to \zz$ (known as \textit{Grothendieck's monodromy pairing}) which extends via tensoring with $\zz_\ell$ to the pairing $\langle \cdot, \cdot \rangle_{\rho_\ell(\sigma), \ell}$.

\end{enumerate}

\end{rmk}

We now define the pairing 
\begin{equation}
\langle \cdot, \cdot \rangle_\mon : \Lambda \times \Lambda^\vee \to \zz
\end{equation}
 as follows.  For any $\lambda \in \Lambda$, and for any $\gamma \in \bar{\Gamma} = \Lambda^\vee$, we set $\langle \lambda, \gamma \rangle_\mon = v(\chi_\lambda(\gamma)).$  The following proposition implies that this is in fact the monodromy pairing of Grothendieck mentioned in \Cref{rmk pairing from u}(b); it is essentially \cite[Theorem 2.1]{coleman2000monodromy} (and \cite[Theorem 5.8]{papikian2013non}) but generalized to a statement that holds for all primes $\ell$.

Below we use the following notation.  Write $K^\times_0 < K^\times$ for the (multiplicative) group of elements of $K$ of valuation $0$.  For any prime $\ell$ and integer $n \geq 0$, write $\sqrt[\ell^n]{K^\times} < \bar{K}^\times$ (resp. $\sqrt[\ell^n]{K^\times_0} < \bar{K}^\times$) for the subgroup consisting of each element whose $\ell^n$th power lies in $K^\times$ (resp. $K^\times_0$).

\begin{prop} \label{prop monodromy pairing}

Let $\ell$ be a prime (not necessarily different from the residue characteristic of $K$), and let $\sigma \in \Gal(\bar{K} / K)$ be a Galois automorphism which fixes pointwise the subgroup $\sqrt[\ell^n]{K^\times_0} < \bar{K}^\times$ and sends $\pi^{1/\ell^n}$ to $\zeta_{\ell^n} \pi^{1/\ell^n}$ for each $n \geq 0$ and for some (any) uniformizer $\pi$ and some (any) $\ell^n$th root $\pi^{1/\ell^n}$ of $\pi$.

\begin{enumerate}[(a)]

\item The automorphism $\rho_\ell(\sigma) \in \Aut(T_\ell(J))$ lies in $U_\ell$.

\item The extension via tensoring with $\zz_\ell$ of the pairing $\langle \cdot, \cdot \rangle_\mon$ defined above is the pairing $\langle \cdot, \cdot \rangle_{\rho_\ell(\sigma), \ell}$.

\end{enumerate}

\end{prop}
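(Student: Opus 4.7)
The plan is to establish part (a) by analyzing directly how $\sigma$ acts on the uniformization $J = T/\Lambda$, and then to deduce part (b) via an explicit Kummer-theoretic computation using the specific form of $\sigma$.

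For part (a), the easier inclusion is $\mathbb{T}_\ell \subseteq \ker(\rho_\ell(\sigma) - 1)$. An element of $T[\ell^n] \subset J[\ell^n]$ arises from a homomorphism $\bar{\Gamma} \to \boldsymbol{\mu}_{\ell^n} \subset \bar{K}^\times$, so the values it takes are $\ell^n$th roots of unity. Since each such root $\zeta$ satisfies $\zeta^{\ell^n} = 1 \in K_0^\times$, it lies in $\sqrt[\ell^n]{K_0^\times}$ and is fixed by $\sigma$; taking inverse limits gives that $\sigma$ acts trivially on $\mathbb{T}_\ell$. For the other inclusion $\mathrm{im}(\rho_\ell(\sigma) - 1) \subseteq \mathbb{T}_\ell$, I would pick $z \in J[\ell^n]$ with lift $\tilde{z} \in T(\bar{K})$, so that $\tilde{z}^{\ell^n} \in \Lambda \subset T(K)$ is Galois-fixed. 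Then $(\sigma \tilde{z})/\tilde{z}$ has trivial $\ell^n$th power and hence lies in $T[\ell^n]$, and its image in $J[\ell^n]$ is exactly $\sigma z - z$. Passing to the inverse limit yields part (a).

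For part (b), both pairings are $\zz_\ell$-bilinear, so it suffices to verify the identity on pairs $(\lambda, \gamma)$ with $\lambda \in \Lambda \subset \Lambda \otimes \zz_\ell$ and $\gamma \in \bar{\Gamma} = \Lambda^\vee \subset \Lambda^\vee \otimes \zz_\ell$. By definition, $\langle \lambda, \gamma \rangle_\mon = v(\chi_\lambda(\gamma)) = v(\lambda(\gamma))$. On the other side, I would choose a compatible system of $\ell^n$th roots $\tilde{z}_n \in T(\bar{K})$ of $\lambda$ (with $\tilde{z}_n^{\ell^n} = \lambda$ and $\tilde{z}_{n+1}^\ell = \tilde{z}_n$); reducing mod $\Lambda$ gives $z_n \in J[\ell^n]$ assembling to $z \in T_\ell(J)$ with $\phi_\ell(z) = \lambda \otimes 1$. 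By the construction of $\langle \cdot, \cdot \rangle_{\rho_\ell(\sigma),\ell}$, I must compute $(\sigma - 1)z = (\sigma z_n - z_n)_n \in \mathbb{T}_\ell$, and evaluate the corresponding homomorphism in $\Hom(\Lambda^\vee \otimes \zz_\ell, T_\ell(\bar{K}^\times))$ at $\gamma$.

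The key computation is at the $n$th level: $\sigma z_n - z_n \in T[\ell^n] = \Hom(\bar{\Gamma}, \boldsymbol{\mu}_{\ell^n})$ sends $\gamma$ to $\sigma(\tilde{z}_n(\gamma))/\tilde{z}_n(\gamma) = \sigma(\lambda(\gamma)^{1/\ell^n})/\lambda(\gamma)^{1/\ell^n}$. Writing $\lambda(\gamma) = u\pi^k$ with $u \in K_0^\times$ and $k = v(\lambda(\gamma))$, the hypotheses on $\sigma$ (fixing all $\ell^n$th roots of valuation-$0$ elements, and sending $\pi^{1/\ell^n}$ to $\zeta_{\ell^n}\pi^{1/\ell^n}$) give that this quotient equals $\zeta_{\ell^n}^k$. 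Under the identifications in (\ref{eq identifications for T_ell(T)}) and the standard isomorphism $T_\ell(\bar{K}^\times) \cong \zz_\ell$ sending $(\zeta_{\ell^n})_n \mapsto 1$, evaluation at $\gamma$ returns $k = v(\lambda(\gamma))$, matching the monodromy pairing.

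The main obstacle is not conceptual but notational: one must carefully track the chain of identifications that turns $\mathbb{T}_\ell$ into $\Hom(\Lambda^\vee \otimes \zz_\ell, T_\ell(\bar{K}^\times))$, and verify that the Kummer cocycle arising from $\sigma$ is compatible under these identifications with the naive valuation pairing. Once the diagram of identifications is laid out, the computation reduces to the one-line Kummer formula above, and both parts follow without further input.
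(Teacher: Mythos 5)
Your proposal is correct and follows essentially the same route as the paper: part (a) via Kummer theory on the explicit description of $T[\ell^n] \subset J[\ell^n]$ and the fact that $\Lambda$ is Galois-fixed, and part (b) via the mod-$\ell^n$ Kummer cocycle computation $\sigma(\tilde{z}_n(\gamma))/\tilde{z}_n(\gamma) = \zeta_{\ell^n}^{v(\lambda(\gamma))}$, then assembling along the inverse limit. The paper derives the general formula $\sigma(a) = \zeta_{\ell^n}^{v(a^{\ell^n})}a$ for $a \in \sqrt[\ell^n]{K^\times}$ once and uses it for both parts, whereas you dispatch part (a) slightly more quickly by noting directly that $\sigma\tilde{z}/\tilde{z}$ is killed by $\ell^n$; this is a cosmetic streamlining rather than a different argument, and the identifications you flag as needing care are exactly the ones the paper also works through explicitly.
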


\begin{proof}
Choose an element $a \in \sqrt[\ell^n]{K^\times}$, so that $a^{\ell^n} \in K^\times$ can be written as $\nu \pi^{v(a^{\ell^n})}$ for some $\nu \in K^\times_0$ and a fixed choice of uniformizer $\pi$.  Then we may write $a = \tilde{\nu} (\pi^{1/\ell^n})^{v(a^{\ell^n})}$ for some $\ell^n$th roots $\tilde{\nu} \in \sqrt[\ell^n]{K^\times_0}$ and $\pi^{1/\ell^n}$ of $\nu$ and $\pi$ respectively.  The Galois automorphism $\sigma$ fixes $\tilde{\nu}$ and sends $\pi^{1/\ell^n}$ to $\zeta_{\ell^n} \pi^{1/\ell^n}$, and it follows that we have 
\begin{equation} \label{eq Galois action}
\sigma(a) = \zeta_{\ell^n}^{v(a^{\ell^n})} a.
\end{equation}

We next observe that for each $n \geq 0$, elements of $T[\ell^n]$ are those homomorphisms $\bar{\Gamma} \to \bar{K}^\times$ which take values which are powers of $\zeta_{\ell^n}$.  Viewed as elements of $\bar{\mathbb{T}}_{\ell^n} \subset J[\ell^n]$, these points are clearly each fixed by $\sigma$ as $\sigma$ fixes each element of $\sqrt[\ell^n]{K^\times}$.  It follows that $\bar{\rho}_{\ell^n}(\sigma)$ pointwise fixes the $\zz / \ell^n\zz$-submodule $\bar{\mathbb{T}}_{\ell^n} \subset J[\ell^n]$.  Meanwhile, given any element $z \in J[\ell^n]$ lifting to an element $\tilde{z} \in T(\bar{K})$ given by a homomorphism $\chi_{\tilde{z}} : \bar{\Gamma} \to \bar{K}^\times$, we compute using (\ref{eq Galois action}) that the lifting $\widetilde{\sigma(z)} \in T(\bar{K})$ of $\sigma(z) \in J[\ell^n]$ is given by the homomorphism 
\begin{equation}
\chi_{\widetilde{g(z)}} : \bar{\Gamma} \to \bar{K}^\times, \ \ \ \ \gamma \mapsto \zeta_{\ell^n}^{v(\chi_{\tilde{z}}^{\ell^n}(\gamma))} \chi_{\tilde{z}}(\gamma).
\end{equation}
Therefore, the Galois element $\sigma$ takes $z$ to $z + w$ (recall that we use additive notation for the group $J[\ell^n]$), where $w$ lifts to the homomorphism 
\begin{equation} \label{eq character w}
\chi_{\tilde{w}} : \bar{\Gamma} \to \bar{K}^\times, \ \ \ \ \gamma \mapsto \zeta_{\ell^n}^{v(\chi_{\tilde{z}}^{\ell^n}(\gamma))}.
\end{equation}
By our above characterization of the $\zz / \ell^n\zz$-submodule $\bar{\mathbb{T}}_{\ell^n} \subset J[\ell^n]$, we have $w \in \bar{\mathbb{T}}_{\ell^n}$.  Therefore, the endomorphism $\bar{\rho}_{\ell^n}(\sigma) - 1 \in \End(J[\ell^n])$ kills $\bar{\mathbb{T}}_{\ell^n}$ and has its image contained in $\bar{\mathbb{T}}_{\ell^n}$.  On passing to the inverse limits with respect to $n$, part (a) now follows from the definition of $U_\ell$.

To prove part (b), it is enough to show that for each $n \geq 1$, given elements $\lambda \in \Lambda$ and $\gamma \in \bar{\Gamma} = \Lambda^\vee$, the values $\langle \lambda, \gamma \rangle_{\rho_\ell(\sigma), \ell}$ and $\langle \lambda, \gamma \rangle_\mon$ are equivalent modulo $\ell^n$, which is to say that we have 
\begin{equation} \label{eq pairing formula reduced}
\overline{\langle \lambda, \gamma \rangle}_{\rho_\ell(\sigma), \ell^n} = \zeta_{\ell^n}^{v(\chi_\lambda(\gamma))},
\end{equation}
where $\overline{\langle \cdot, \cdot \rangle}_{\rho_\ell(\sigma), \ell^n}$ denotes the composition of $\langle \cdot, \cdot \rangle_{\rho_\ell(\sigma), \ell}$ with reduction modulo $\ell^n$.   It is not hard to see that this reduced pairing factors through a pairing 
\begin{equation}
\overline{\langle \cdot, \cdot \rangle}_{\bar{\rho}_{\ell^n}(\sigma), \ell^n} : \Lambda / [\ell^n]\Lambda \times \Lambda^\vee / [\ell^n]\Lambda^\vee \to \boldsymbol{\mu}_{\ell^n}
\end{equation}
 which is constructed in an analogous way to $\langle \cdot, \cdot \rangle_{\rho_\ell(\sigma), \ell}$, with $\rho_\ell(\sigma)$ replaced with $\bar{\rho}_{\ell^n}(\sigma)$, with $\mathbb{T}_\ell \subset \Lambda \otimes \zz_\ell$ replaced by $\bar{\mathbb{T}}_{\ell^n} \subset \Lambda / [\ell^n]\Lambda$, with the map $\bar{\phi}_{\ell^n}$ from \S\ref{sec background pairings} allowing us to identify $J[\ell^n] / \bar{\mathbb{T}}_{\ell^n}$ with $\Lambda / [\ell^n]\Lambda$, and with the identification $\bar{\mathbb{T}}_{\ell^n} = \Hom(\Lambda^\vee / [\ell^n]\Lambda^\vee, \boldsymbol{\mu}_{\ell^n})$ analogous to that shown in (\ref{eq identifications for T_ell(T)}).

Choose any element $\lambda \in \Lambda$, and choose an element $z \in J[\ell^n]$ such that $\bar{\phi}_{\ell^n}(z) \in \Lambda / [\ell^n]\Lambda$ is the reduction of $\lambda$ modulo $\ell^n$th powers of elements of $\Lambda$.  Our characterization of $\sigma(z)$ given above shows that $\sigma(z) - z = (\bar{\rho}_{\ell^n}(\sigma) - 1)(z)$ is the point $w \in \bar{\mathbb{T}}_{\ell^n}$ corresponding to the character described in (\ref{eq character w}).  By construction of the map $\bar{\phi}_{\ell^n}$, this is the character given by $\gamma \mapsto \zeta_{\ell^n}^{\chi_\lambda(\gamma))}$, which (after identifying $\bar{\Gamma}$ with $\Lambda^\vee$) factors through $\Lambda^\vee / [\ell^n] \Lambda^\vee$ to yield a character in $\Hom(\Lambda^\vee / [\ell^n]\Lambda^\vee, \boldsymbol{\mu}_{\ell^n})$.  Then it is clear by construction of $\overline{\langle \cdot, \cdot \rangle}_{\bar{\rho}_{\ell^n}(\sigma), \ell^n}$ that we have 
\begin{equation}
\overline{\langle \bar{\lambda}, \bar{\gamma} \rangle}_{\bar{\rho}_{\ell^n}(\sigma), \ell^n} = \zeta_{\ell^n}^{v(\chi_\lambda(\gamma))},
\end{equation}
where $\bar{\lambda}$ and $\bar{\gamma}$ are the reductions of $\lambda$ and $\gamma$ respectively.  The desired formula in (\ref{eq pairing formula reduced}) follows.
\end{proof}

We next need an explicit description of the canonical principal polarization of $J$.  To this end, we make the following definition (which possibly originated as \cite[Definition 1.2]{mumford1972analytic2}).

\begin{dfn} \label{dfn polarization Mumford}

A \emph{polarization (in the sense of Mumford)} is a homomorphism $\Phi : \Lambda \to \Lambda^\vee$ satisfying the following two properties:

\begin{enumerate}[(i)]

\item we have $\chi_{\Phi(\lambda)}(\lambda') = \chi_{\Phi(\lambda')}(\lambda)$ for $\lambda, \lambda' \in \Lambda$; and 

\item we have $v(\chi_{\Phi(\lambda)}(\lambda)) > 0$ for $\lambda \in \Lambda \smallsetminus \{1\}$.

\end{enumerate}

\end{dfn}

One checks using property (i) of the above definition that the map $\Phi^* : T = \Hom(\Lambda^\vee, \ggr_m) \to \Hom(\Lambda, \ggr_m) = T^\vee$ induced via pullback by a polarization $\Phi: \Lambda \to \Lambda^\vee$ restricts to $\Phi$ on $\Lambda < T$.  This in turn induces a homomorphism on quotients which (by abuse of notation) we also denote by $\Phi: J = T / \Lambda \to T^\vee / \Lambda^\vee = J^\vee$.  It follows easily from property (ii) of the above definition that the map $\Phi: \Lambda \to \Lambda^\vee$ is injective and therefore has finite cokernel; the map $\Phi: J \to J^\vee$ is therefore surjective with finite kernel.  One sees from \cite[Theorem 2.4 and its proof]{bosch1991degenerating} that the homomorphism $\Phi: J \to J^\vee$ is in fact a polarization (in the usual sense) on the abelian variety $J$, which is principal if and only if $\Phi: \Lambda \to \Lambda^\vee$ is surjective.

\begin{rmk} \label{rmk transvections on toric part}

For each $n \geq 0$, one sees from the above discussion that the canonical principal polarization on $J$, being an isomorphism, restricts to an isomorphism $\bar{\mathbb{T}}_{\ell^n} \stackrel{\sim}{\to} \bar{\mathbb{T}}_{\ell^n}^\vee$.  Therefore, the isomorphism $T_\ell(J) \stackrel{\sim}{\to} T_\ell(J^\vee)$ induced from the canonical principal polarization by taking inverse limits maps $\mathbb{T}_\ell$ to $\mathbb{T}_\ell^\vee$.  It now follows from \Cref{cor explicit Weil pairing} that for any $\mathfrak{w} \in \mathbb{T}_\ell$, the submodule $\mathbb{T}_\ell \subset T_\ell(J)$ is killed by the homomorphism $e_\ell(\cdot, \mathfrak{w}) = \mathfrak{e}_\ell(\cdot, \varphi(\mathfrak{w})) : T_\ell(J) \to T_\ell(\bar{K}^\times)$, where the pairing $e_\ell : T_\ell(J) \times T_\ell(J) \to T_\ell(\bar{K}^\times)$ is defined as in \S\ref{sec intro superelliptic}.  In particular, the restriction of the pairing $e_\ell$ to $\mathbb{T}_\ell \times \mathbb{T}_\ell$ is trivial.

\end{rmk}

Now let $\varphi : \Lambda \to \Lambda^\vee$ be the homomorphism given by $c_\lambda \mapsto \ev_\lambda$ (where $\ev_\lambda$ denotes the evaluation-at-$\lambda$ character as in \Cref{rmk Lambda dual is barGamma}).  By construction of $\Lambda$ and by \Cref{rmk Lambda dual is barGamma}, this map $\varphi$ is an isomorphism, and by \cite[Theorem 1 and Theorem 4]{drinfeld1973periods}, it is a polarization in the sense of Mumford and thus induces a principal polarization $\varphi : J \stackrel{\sim}{\to} J^\vee$.  The following fact is stated only very implicitly in \cite[\S7]{papikian2013non}.

\begin{prop} \label{prop canonical principal polarization}

The homomorphism $\varphi : J \stackrel{\sim}{\to} J^\vee$ is the canonical principal polarization of $J$.

\end{prop}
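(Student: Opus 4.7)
The plan is to identify $\varphi$ with the canonical principal polarization $\lambda_{\mathrm{can}}$ by matching their associated symmetric line bundles on $J$, following the strategy implicit in \cite[\S7]{papikian2013non}. Recall that $\lambda_{\mathrm{can}}$ is characterized as the polarization associated with the theta divisor $\Theta \subset J$, namely the map sending $z \in J(\bar{K})$ to the class of $t_z^*\mathcal{O}(\Theta) \otimes \mathcal{O}(\Theta)^{-1}$ in $J^\vee(\bar{K})$.

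First, I would invoke the non-archimedean analog of the Appell-Humbert theorem for symmetric line bundles on $J = T/\Lambda$, as developed in \cite[Chapter VI]{gerritzen2006schottky} and revisited in the proof of \Cref{prop explicit Weil pairing}: every symmetric line bundle $\mathcal{L}$ on $J$ is described, after pullback to $T$, by an action of $\Lambda$ of the form
\[
\lambda : (\tilde{w}, a) \mapsto \bigl(\lambda \tilde{w},\, c(\lambda)\, \chi_{\Phi(\lambda)}(\tilde{w})\, a\bigr),
\]
for a uniquely determined polarization $\Phi : \Lambda \to \Lambda^\vee$ in the sense of \Cref{dfn polarization Mumford} together with a semicharacter $c : \Lambda \to \bar{K}^\times$. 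Essentially repeating the analysis carried out in the proof of \Cref{prop explicit Weil pairing}, the map $J \to J^\vee$ sending $z$ to $t_z^* \mathcal{L} \otimes \mathcal{L}^{-1}$ is then precisely the one induced by $\Phi$.

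Next, I would identify the polarization $\Phi_\Theta$ associated to the theta line bundle $\mathcal{L}_\Theta = \mathcal{O}(\Theta)$. Classically, $\Theta$ is the image of $\mathrm{Sym}^{g-1}(C)$ under an Abel-Jacobi embedding with a suitable base point, and in the Schottky uniformization its pullback to $T$ is cut out by a specific product of translates of the theta functions $\Theta_{a, b}$ from \S\ref{sec background jacobians}. Using the automorphy relation \eqref{eq automorphy} together with the definition $c_\gamma = c_{a, \gamma(a)}$ of the lattice elements, one computes directly that the automorphy factor of $\mathcal{L}_\Theta$ at $\lambda = c_\gamma \in \Lambda$ multiplies the fiber variable $a$ by $c(\lambda) \cdot \mathrm{ev}_\gamma(\tilde{w})$ after absorbing the semicharacter part. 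Comparing this with the defining formula $\varphi(c_\gamma) = \mathrm{ev}_\gamma$ then yields $\Phi_\Theta = \varphi$, and the proposition follows.

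The main technical obstacle lies in this last step, which demands an explicit description of the theta divisor in the Schottky uniformization and careful bookkeeping of automorphy factors for the product of theta functions. This calculation is essentially carried out in \cite[\S7]{papikian2013non}, although, as the author observes, the identification $\Phi_\Theta = \varphi$ is left implicit there. An alternative route that avoids explicit theta function manipulation would be to compare $\varphi$ and $\lambda_{\mathrm{can}}$ indirectly on Tate modules using \Cref{prop explicit Weil pairing} combined with the monodromy pairing computation of \Cref{prop monodromy pairing}, and then invoke rigidity of principal polarizations compatible with the toric filtration $\mathbb{T}_\ell \subset T_\ell(J)$.
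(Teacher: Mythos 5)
Your primary approach---identifying $\varphi$ with the canonical polarization by matching the Appell--Humbert data of the theta line bundle---is a genuinely different route from the paper's. The paper never touches the theta divisor. Instead it introduces an auxiliary $\varphi'$ (the Mumford-sense polarization inducing the canonical one), then compares the two pairings $\langle\cdot,\cdot\rangle_\mon^\varphi$ and $\langle\cdot,\cdot\rangle_\mon^{\varphi'}$ on $\Lambda^\vee\times\Lambda^\vee$: the former equals $(\gamma,\gamma')\mapsto v(c_\gamma(\gamma'))$ by \Cref{prop monodromy pairing}(b), while the latter is identified with the same expression via Grothendieck's Picard--Lefschetz formula (\cite[Th\'{e}or\`{e}me 12.5]{grothendieck1972modeles}) read through the homology of the reduction graph, and then \Cref{thm drinfeld-manin}. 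Nondegeneracy of this pairing then forces $\varphi=\varphi'$. In short, the paper reduces the problem to an identity of numerical pairings, entirely sidestepping the theta divisor.

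Your plan is plausible in outline, but its load-bearing step---giving an explicit description of $\Theta$ as a divisor on $T/\Lambda$ cut out by a product of translates of the $\Theta_{a,b}$, and tracking the automorphy factor to recover $\varphi(c_\gamma)=\mathrm{ev}_\gamma$---is itself a substantial piece of work that is \emph{not} written out in \cite[\S7]{papikian2013non}; the paper remarks explicitly that this identification is ``only very implicit'' there. So as written, the proposal is a sketch resting on a nontrivial and essentially unreferenced computation, whereas the paper's route is a short deduction from results it has already established. Your ``alternative route'' at the end is closer in spirit to the paper's proof (pass to the monodromy pairing and argue from there), but it is missing the two concrete ingredients that make it work: the Picard--Lefschetz formula to pin down $\langle\cdot,\cdot\rangle_\mon^{\varphi'}$, and the Drinfeld--Manin formula \Cref{thm drinfeld-manin} to evaluate it; what you call ``rigidity of principal polarizations compatible with the toric filtration'' is doing the job of the simple nondegeneracy-of-pairing argument, but without those two inputs there is nothing to compare.
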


\begin{proof}
In this proof and elsewhere below, given any pairing $\langle \cdot, \cdot \rangle: \Lambda \times \Lambda^\vee \to \zz$ and any principal polarization $\Phi : \Lambda \stackrel{\sim}{\to} \Lambda^\vee$, we denote by $\langle \cdot, \cdot \rangle^\Phi : \Lambda^\vee \times \Lambda^\vee \to \zz$ the pairing given by composing $\langle \cdot, \cdot \rangle$ with the inverse $\Phi^{-1} : \Lambda^\vee \stackrel{\sim}{\to} \Lambda$ applied to the first argument.

Let $\varphi' : \Lambda \to \Lambda^\vee$ be the polarization in the sense of Mumford which induces the canonical principal polarization on $J$.  One deduces from \cite[Th\'{e}or\`{e}me 12.5]{grothendieck1972modeles} (the Picard-Lefschetz formula) a description (given in \cite[\S3.4]{papikian2013non}) of the pairing 
\begin{equation}
\langle \cdot, \cdot \rangle_\mon^{\varphi'} : \Lambda^\vee \times \Lambda^\vee \to \zz
\end{equation}
via identifying $\Lambda^\vee$ with the first singular homology group of the dual graph of the special fiber of $C$.  This in turn is equivalent to the definition of the pairing $(\gamma, \gamma') \mapsto e(\pi_*([v, \gamma(v)]), \pi_*([w, \gamma'(w)]))$ from \S\ref{sec period matrix}, on identifying $\Lambda^\vee$ with $\bar{\Gamma}$: see \cite[Theorem 6.8 and its proof; discussion in \S7.3]{papikian2013non}.  By \Cref{thm drinfeld-manin}, this pairing is the same as the one given by $(\gamma, \gamma') \mapsto v(c_\gamma(\gamma'))$.

Meanwhile, thanks to \Cref{prop monodromy pairing}(b), the pairing $\langle \cdot, \cdot \rangle_\mon^\varphi$ is given by 
\begin{equation}
(\gamma, \gamma') \mapsto v(\chi_{\varphi^{-1}(\gamma)}(\gamma')) = v(c_\gamma(\gamma')).
\end{equation}
This means that the pairings $\langle \cdot, \cdot \rangle_\mon^\varphi$ and $\langle \cdot, \cdot \rangle_\mon^{\varphi'}$ are equal.  The fact that $\varphi = \varphi'$ now follows straightforwardly from the nondegeneracy of this pairing implied by \Cref{thm drinfeld-manin}.
\end{proof}

\begin{cor} \label{cor action of sigma pairing}

Let $\sigma \in \Gal(\bar{K} / K)$ be an element satisfying the properties given in the hypothesis of \Cref{prop monodromy pairing}.  The pairing $\langle \cdot, \cdot \rangle_{\rho_\ell(\sigma), \ell}^\varphi$, viewed via the identification given in \Cref{rmk Lambda dual is barGamma} as a pairing $\bar{\Gamma} \times \bar{\Gamma} \to \zz$ tensored with $\zz_\ell$, is given by 
\begin{equation}
(\gamma, \gamma') \mapsto v(c_\gamma(\gamma')) \in \zz.
\end{equation}

\end{cor}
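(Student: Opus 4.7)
The plan is to derive this corollary directly from \Cref{prop monodromy pairing}(b) together with the explicit definition of the polarization $\varphi$ given just above \Cref{prop canonical principal polarization}. By \Cref{prop monodromy pairing}(b), the pairing $\langle \cdot, \cdot \rangle_{\rho_\ell(\sigma), \ell}$ on $(\Lambda \otimes \zz_\ell) \times (\Lambda^\vee \otimes \zz_\ell)$ is the $\zz_\ell$-linear extension of the integer-valued pairing $\langle \lambda, \gamma \rangle_\mon = v(\chi_\lambda(\gamma))$.  Recalling the superscript convention $\langle \cdot, \cdot \rangle^\Phi$ introduced in the proof of \Cref{prop canonical principal polarization} (namely, precomposition of the first argument by $\Phi^{-1}$), for any $\gamma, \gamma' \in \Lambda^\vee \otimes \zz_\ell = \bar{\Gamma} \otimes \zz_\ell$, I would write
\begin{equation*}
\langle \gamma, \gamma' \rangle_{\rho_\ell(\sigma), \ell}^{\varphi} = \langle \varphi^{-1}(\gamma), \gamma' \rangle_{\rho_\ell(\sigma), \ell} = v\bigl(\chi_{\varphi^{-1}(\gamma)}(\gamma')\bigr).
\end{equation*}

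The next step is to substitute in the explicit description of $\varphi$. By definition, $\varphi$ sends $c_\lambda \in \Lambda$ to $\ev_\lambda \in \Lambda^\vee$; under the identification $\Lambda^\vee \cong \bar{\Gamma}$ furnished by \Cref{rmk Lambda dual is barGamma} (given by $\ev_\gamma \leftrightarrow \gamma$), this identifies $\varphi^{-1}(\gamma)$ with $c_\gamma \in \Lambda$ for any $\gamma \in \bar{\Gamma}$.  Consequently,
\begin{equation*}
\chi_{\varphi^{-1}(\gamma)}(\gamma') = \chi_{c_\gamma}(\gamma') = c_\gamma(\gamma'),
\end{equation*}
the last equality holding because, under the identifications set up in \S\ref{sec background pairings}, the character $\chi_{c_\gamma}$ on $\bar{\Gamma} = \Lambda^\vee$ associated to the lattice point $c_\gamma \in \Lambda$ is the homomorphism $c_\gamma : \bar{\Gamma} \to K^\times$ itself. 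Plugging this into the previous display yields $\langle \gamma, \gamma' \rangle_{\rho_\ell(\sigma), \ell}^{\varphi} = v(c_\gamma(\gamma'))$, as claimed.

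There is essentially no obstacle to overcome here; the corollary is a repackaging of a computation already done implicitly in the proof of \Cref{prop canonical principal polarization} (which used exactly this identity to conclude $\varphi = \varphi'$). The only care required is keeping straight the various identifications in play — namely, distinguishing elements of $\Lambda$ (generators of the form $c_\gamma$) from elements of $\Lambda^\vee \cong \bar{\Gamma}$ (evaluation characters $\ev_\gamma$), and remembering how $\chi_\lambda$ for $\lambda \in \Lambda$ is being viewed as a character on $\Lambda^\vee$.
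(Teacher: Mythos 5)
Your proof is correct and takes essentially the same approach as the paper's, which simply cites \Cref{prop monodromy pairing}(b) and \Cref{prop canonical principal polarization}; you have merely unwound the identity $\varphi^{-1}(\gamma) = c_\gamma$ and the tautology $\chi_{c_\gamma}(\gamma') = c_\gamma(\gamma')$, a computation that already appears verbatim in the proof of \Cref{prop canonical principal polarization}.
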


\begin{proof}
This is just \Cref{prop monodromy pairing}(b) combined with \Cref{prop canonical principal polarization}.
\end{proof}

As we want to equate the operator $\rho_\ell(\sigma) \in \Aut(T_\ell(J))$ with a linear operator on $T_\ell(J)$ given by a product of transvections, our final objective in this section is to provide a concrete description of the action on $T_\ell(J)$ of a transvection with respect to an element of $\mathbb{T}_\ell$.

\begin{prop} \label{prop explicit transvection}

Let $\ell$ be a prime (not necessarily different from the residue characteristic of $K$); choose an element $\mathfrak{w} \in \mathbb{T}_\ell = \Hom(\Lambda^\vee \otimes \zz_\ell, \zz_\ell)$; and let $t_{\mathfrak{w}} \in \Aut(T_\ell(J))$ be the associated transvection as defined in \S\ref{sec intro}.

\begin{enumerate}[(a)]

\item We have $t_{\mathfrak{w}} \in U_\ell$.

\item Let $\langle \cdot, \cdot \rangle_{t_{\mathfrak{w}}, \ell}^\varphi : \Lambda^\vee \times \Lambda^\vee \to \zz_\ell$ denote the extension via tensoring with $\zz_\ell$ of the pairing $\langle \cdot, \cdot \rangle_{t_{\mathfrak{w}}}^\varphi$ defined as in the proof of \Cref{prop canonical principal polarization}.  Then this pairing is given by the formula 
\begin{equation} \label{eq formula transvection}
\langle \mu, \mu' \rangle_{t_{\mathfrak{w}}, \ell}^\varphi = \chi_{\mathfrak{w}}(\mu) \cdot \chi_{\mathfrak{w}}(\mu'), \ \ \ \ \mu, \mu' \in \Lambda^\vee \otimes \zz_\ell.
\end{equation}

\end{enumerate}

\end{prop}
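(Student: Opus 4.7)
The plan is to unwind directly the definitions of $t_{\mathfrak{w}}$, of the pairing $\langle\cdot,\cdot\rangle_{u,\ell}$ attached to a $u \in U_\ell$, and of the identifications used to realize $\mathbb{T}_\ell$ as $\Hom(\Lambda^\vee \otimes \zz_\ell, \zz_\ell)$, and then to invoke the two key earlier results: \Cref{rmk transvections on toric part} (which says that the restriction of $e_\ell$ to $\mathbb{T}_\ell \times \mathbb{T}_\ell$ is trivial) and \Cref{cor explicit Weil pairing} (which expresses $\mathfrak{e}_\ell$ on $T_\ell(J)\times\mathbb{T}_\ell^\vee$ explicitly via the character of a lift).

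For part (a), I would note that $(t_{\mathfrak{w}} - 1)(\mathfrak{v}) = e_\ell(\mathfrak{v},\mathfrak{w})\mathfrak{w}$ for every $\mathfrak{v}\in T_\ell(J)$, so the image of $t_{\mathfrak{w}}-1$ lies in $\zz_\ell\cdot\mathfrak{w}\subseteq\mathbb{T}_\ell$, while the triviality of $e_\ell|_{\mathbb{T}_\ell\times\mathbb{T}_\ell}$ from \Cref{rmk transvections on toric part} shows that the kernel of $t_{\mathfrak{w}}-1$ contains $\mathbb{T}_\ell$. Both defining conditions of $U_\ell$ are therefore met.

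For part (b), I first describe the induced map $\mathfrak{u} : T_\ell(J)/\mathbb{T}_\ell \to \mathbb{T}_\ell$. Given $\lambda\in\Lambda\otimes\zz_\ell$, lift it via $\phi_\ell$ to $\mathfrak{v}\in T_\ell(J)$; then $\mathfrak{u}(\lambda) = e_\ell(\mathfrak{v},\mathfrak{w})\mathfrak{w}$. Under the identification $\mathbb{T}_\ell = \Hom(\Lambda^\vee\otimes\zz_\ell,\zz_\ell)$ this element is the character $e_\ell(\mathfrak{v},\mathfrak{w})\cdot\chi_{\mathfrak{w}}$, so by definition of $\langle\cdot,\cdot\rangle_{t_{\mathfrak{w}},\ell}$ we obtain
\begin{equation*}
\langle \lambda, \gamma\rangle_{t_{\mathfrak{w}},\ell} \;=\; e_\ell(\mathfrak{v},\mathfrak{w})\cdot\chi_{\mathfrak{w}}(\gamma) \qquad (\lambda\in\Lambda\otimes\zz_\ell,\ \gamma\in\Lambda^\vee\otimes\zz_\ell).
\end{equation*}
Now by definition $e_\ell(\mathfrak{v},\mathfrak{w}) = \mathfrak{e}_\ell(\mathfrak{v},\varphi(\mathfrak{w}))$, and \Cref{cor explicit Weil pairing} rewrites this as $\chi_{\varphi(\mathfrak{w})}(\phi_\ell(\mathfrak{v})) = \chi_{\varphi(\mathfrak{w})}(\lambda)$. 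Since the polarization $\varphi : J\stackrel{\sim}{\to}J^\vee$ is obtained by pullback from $\varphi : \Lambda\to\Lambda^\vee$ (as recalled just before \Cref{prop canonical principal polarization}), tracing through the identifications $\mathbb{T}_\ell = \Hom(\Lambda^\vee\otimes\zz_\ell,\zz_\ell)$ and $\mathbb{T}_\ell^\vee = \Hom(\Lambda\otimes\zz_\ell,\zz_\ell)$ gives $\chi_{\varphi(\mathfrak{w})}(\lambda) = \chi_{\mathfrak{w}}(\varphi(\lambda))$. Substituting into the displayed equation and then precomposing the first argument with $\varphi^{-1}$ yields precisely
\begin{equation*}
\langle \mu,\mu'\rangle_{t_{\mathfrak{w}},\ell}^{\varphi} \;=\; \chi_{\mathfrak{w}}(\varphi(\varphi^{-1}(\mu)))\cdot\chi_{\mathfrak{w}}(\mu') \;=\; \chi_{\mathfrak{w}}(\mu)\cdot\chi_{\mathfrak{w}}(\mu'),
\end{equation*}
which is (\ref{eq formula transvection}).

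The only delicate step is the identification $\chi_{\varphi(\mathfrak{w})}(\lambda) = \chi_{\mathfrak{w}}(\varphi(\lambda))$; this is essentially a book-keeping check that the Tate-module map induced by the pullback morphism $T\to T^\vee$ agrees on $\mathbb{T}_\ell$ with the map dual to $\varphi : \Lambda\to\Lambda^\vee$. I would spell this out by noting that at each finite level the torsion subgroup $T[\ell^n] = \Hom(\Lambda^\vee,\boldsymbol{\mu}_{\ell^n})$ is sent by $\varphi$ to $T^\vee[\ell^n] = \Hom(\Lambda,\boldsymbol{\mu}_{\ell^n})$ via precomposition with $\varphi$, and then passing to the limit. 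Once this is settled, parts (a) and (b) follow from a clean chain of substitutions.
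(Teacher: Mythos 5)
Your proof is correct and follows essentially the same route as the paper's. Part (a) is identical (kernel via \Cref{rmk transvections on toric part}, image directly from the definition). For part (b), you compute $\langle\lambda,\gamma\rangle_{t_{\mathfrak{w}},\ell} = e_\ell(\mathfrak{v},\mathfrak{w})\cdot\chi_{\mathfrak{w}}(\gamma)$ and reduce to evaluating $e_\ell(\mathfrak{v},\mathfrak{w})$ by \Cref{cor explicit Weil pairing}, exactly as the paper does. The one small divergence is the final identity $e_\ell(\mathfrak{v},\mathfrak{w}) = \chi_{\mathfrak{w}}(\mu)$: the paper passes through the symmetry property \Cref{dfn polarization Mumford}(i), writing $\chi_{\varphi(\mathfrak{w})}(\varphi^{-1}(\mu)) = \chi_{\varphi(\varphi^{-1}(\mu))}(\mathfrak{w}) = \chi_{\mathfrak{w}}(\mu)$, whereas you invoke directly that the map on $\mathbb{T}_\ell$ induced by $\varphi$ is precomposition with $\varphi : \Lambda \to \Lambda^\vee$, giving $\chi_{\varphi(\mathfrak{w})}(\lambda) = \chi_{\mathfrak{w}}(\varphi(\lambda))$; with $\lambda = \varphi^{-1}(\mu)$ this yields the same thing. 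Your justification is if anything slightly more transparent, since it is a tautological consequence of how $\Phi^*$ is defined rather than an appeal to the symmetry axiom, but both are correct and the amount of work is the same.
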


\begin{proof}
It follows from \Cref{rmk transvections on toric part} and the definition of the transvection $t_{\mathfrak{w}}$ that the endomorphism $t_{\mathfrak{w}} - 1 \in \End(T_\ell(J))$ kills $\mathbb{T}_\ell$.  Meanwhile, it is immediate from the definition of $t_{\mathfrak{w}}$ that the image of $t_{\mathfrak{w}} - 1 \in \End(T_\ell(J))$ lies in $\mathbb{T}_\ell$.  This proves part (a).

Choose any elements $\mu, \mu' \in \Lambda^\vee$.  Now let $\mathfrak{v} \in T_\ell(J)$ be an element satisfying $\phi_\ell(\mathfrak{v}) = \varphi^{-1}(\mu)$, where $\phi_\ell : T_\ell(J) \to \Lambda \otimes \zz_\ell$ is the homomorphism defined in \S\ref{sec background pairings}.  By \Cref{prop explicit Weil pairing}, we have 
\begin{equation}
e_\ell(\mathfrak{v}, \mathfrak{w}) = \mathfrak{e}_\ell(\mathfrak{v}, \varphi(\mathfrak{w})) = \chi_{\varphi(\mathfrak{w})}(\varphi^{-1}(\mu)) = \chi_{\varphi(\varphi^{-1}(\mu))}(\mathfrak{w}) = \chi_\mu(\mathfrak{w}) = \chi_{\mathfrak{w}}(\mu),
\end{equation}
where the third-to-last equality comes from the property \Cref{dfn polarization Mumford}(i).  Therefore, the endomorphism $t_{\mathfrak{w}} - 1 \in \End(T_\ell(J))$ sends $\mathfrak{v}$ to the element $\chi_{\mathfrak{w}}(\mu) \mathfrak{w} \in \mathbb{T}_\ell$, which itself gets identified with the character $\chi_{\mathfrak{w}}(\mu) \chi_{\mathfrak{w}} : \Lambda^\vee \to \zz_\ell$.  Now by construction, we get 
\begin{equation}
\langle \mu, \mu' \rangle_{t_{\mathfrak{w}}, \ell}^\varphi = \langle \varphi^{-1}(\mu), \mu' \rangle_{t_{\mathfrak{w}}, \ell} = \chi_{\mathfrak{w}}(\mu) \chi_{\mathfrak{w}}(\mu').
\end{equation}
\end{proof}

\section{Proof of the main claim of \Cref{thm main}} \label{sec proof of main}

This section uses the results we have built up in previous sections to prove the formula for $\rho_\ell(\sigma)$ claimed by \Cref{thm main}.  Fixing a prime $\ell$ and reprising the notation of \S\ref{sec background},\ref{sec monodromy}, we define the elements $\mathfrak{v}_0, \dots, \mathfrak{v}_i \in \mathbb{T}_\ell \subset T_\ell(J)$ in the following manner.  Identifying $\mathbb{T}_\ell$ with $\Hom(\Lambda^\vee \otimes \zz_\ell, T_\ell(\bar{K}^\times))$ as in (\ref{eq T_ell}), $\Lambda^\vee$ with $\bar{\Gamma}$ as in \Cref{rmk Lambda dual is barGamma}, and $T_\ell(\bar{K}^\times)$ with $\zz_\ell$, for $0 \leq i \leq h$, the character $\mathfrak{v}_i \in \mathbb{T}_\ell$ is the one determined by the character $\chi_{\mathfrak{v}_i} : \bar{\Gamma} \to \zz_\ell$ given by 
\begin{equation} \label{eq v_i}
\chi_{\mathfrak{v}_i} : 
\begin{cases}
\gamma_{i, i', 0} &\mapsto -1 \\
\gamma_{i, i', 1} &\mapsto 1 \\
\gamma_{j, j', m} &\mapsto 0 \text{ if } j \neq i \text{ or } m \not\equiv 0, 1 \ (\text{mod} \ p).
\end{cases}
\end{equation}
Note that this character is well defined: \Cref{cor basis of barGamma} implies that the set $\{\gamma_{j, j', m}\}_{1 \leq j \leq h, \, 0 \leq m \leq p - 1}$ generates $\bar{\Gamma}$ with the $h$ (independent) relations given by $\gamma_{j, j', p-1} \cdots \gamma_{j, j', 0} = 1$ for $1 \leq j \leq h$, and the map described by (\ref{eq v_i}) is compatible with these relations.

\begin{lemma} \label{lemma zeta_p^n v_i}

For $1 \leq i \leq h$ and $n \in \zz$, we have $\zeta_p^n \mathfrak{v}_i \in \mathbb{T}_\ell$, and this character is the one determined by the character $\chi_{\zeta_p^n \mathfrak{v}_i} : \bar{\Gamma} \to \zz_\ell$ given by 
\begin{equation} \label{eq zeta_p^n v_i}
\chi_{\zeta_p^n \mathfrak{v}_i} : 
\begin{cases}
\gamma_{i, i', -n} &\mapsto -1 \\
\gamma_{i, i', 1 - n} &\mapsto 1 \\
\gamma_{j, j', m} &\mapsto 0 \text{ if } j \neq i \text{ or } m + n \not\equiv 0, 1 \ (\text{mod} \ p).
\end{cases}
\end{equation}

\end{lemma}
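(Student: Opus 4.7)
The plan is to compute $\chi_{\zeta_p^n \mathfrak{v}_i}$ directly by applying \Cref{prop automorphism zeta_p} to describe the action of $\zeta_p$ on $T$ (and hence on $\mathbb{T}_\ell \subset T_\ell(J)$), and then verify the formula (\ref{eq zeta_p^n v_i}) by evaluating on the generators $\gamma_{k, k', m}$ of $\bar{\Gamma}$.

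The key observation is that the automorphism of $T$ induced by $\zeta_p$, given by \Cref{prop automorphism zeta_p} as $\chi \mapsto \chi \circ {}^{s_j}(\cdot)$ for \emph{some} (any) index $j$, is actually \emph{independent} of the choice of $j$ when viewed on $T = \Hom(\bar{\Gamma}, \ggr_m)$: for any two choices $j, j'$, the elements $s_j, s_{j'}$ differ multiplicatively by $s_j s_{j'}^{-1} \in \Gamma$, so the maps ${}^{s_j}(\cdot)$ and ${}^{s_{j'}}(\cdot)$ of $\Gamma$ differ by conjugation by an element of $\Gamma$, which becomes trivial on the abelianization $\bar{\Gamma}$. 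Restricting to the $\ell$-adic Tate module $\mathbb{T}_\ell \subset T_\ell(J)$, we get that for any character $\mathfrak{v} \in \mathbb{T}_\ell$,
\begin{equation*}
\chi_{\zeta_p^n \mathfrak{v}}(\gamma) = \chi_{\mathfrak{v}}(s_j^n \gamma s_j^{-n})
\end{equation*}
for any index $j$; in particular $\zeta_p^n \mathfrak{v}_i \in \mathbb{T}_\ell$.

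To evaluate $\chi_{\zeta_p^n \mathfrak{v}_i}$ on a generator $\gamma_{k, k', m}$ of $\bar{\Gamma}$, I will exploit this freedom to choose $j$ differently for each generator: taking $j = k'$ gives the telescoping computation
\begin{equation*}
s_{k'}^n \gamma_{k, k', m} s_{k'}^{-n} = s_{k'}^n \cdot s_{k'}^{m-1} s_k s_{k'}^{-m} \cdot s_{k'}^{-n} = s_{k'}^{m+n-1} s_k s_{k'}^{-(m+n)} = \gamma_{k, k', m+n}.
\end{equation*}
Combining, $\chi_{\zeta_p^n \mathfrak{v}_i}(\gamma_{k, k', m}) = \chi_{\mathfrak{v}_i}(\gamma_{k, k', m+n})$, which by the defining formula (\ref{eq v_i}) equals $-1$ if $k = i$ and $m + n \equiv 0 \pmod{p}$, equals $1$ if $k = i$ and $m + n \equiv 1 \pmod{p}$, and equals $0$ in all other cases. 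Setting $m = -n$ and $m = 1 - n$ respectively recovers exactly (\ref{eq zeta_p^n v_i}).

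The only genuinely nontrivial step is the independence-of-$j$ observation, which allows us to transport the computation onto whichever axis $s_{k'}$ makes each generator easy to conjugate; once that is in hand, the rest is a direct substitution. I expect no other obstacles.
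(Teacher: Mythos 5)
Your proof is correct and takes essentially the same approach as the paper's one-line argument, which likewise invokes \Cref{prop automorphism zeta_p} together with the conjugation identity $s_{k'}^n \gamma_{k,k',m} s_{k'}^{-n} = \gamma_{k,k',m+n}$. Your explicit note that the conjugation action descends to a $j$-independent operator on $\bar\Gamma$ (since $s_j s_{j'}^{-1} \in \Gamma$) is a useful spelling-out of the ``some (any) index'' clause already built into the statement of \Cref{prop automorphism zeta_p}, but it is not a departure in method.
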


\begin{proof}
This follows from applying \Cref{prop automorphism zeta_p} and using the fact that $s_{i'}^n \gamma_{i, i', m} s_{i'}^{-n} = \gamma_{i, i', m+n}$ for $1 \leq i \leq h$ and $m \in \zz$.
\end{proof}

Given an index $i \in \{1, \dots, h\}$, we define the index $i'$ and the points $v_i, \hat{v}_i, \tilde{v}_{i, i'}, \tilde{v}_{i', i}$ as in \S\ref{sec period matrix}, and we set the following notation.  Let $v_i^{(1)}, \dots, v_i^{(r)}$ be the vertices in the interior of the path $[\tilde{v}_{i, i'}, \tilde{v}_{i', i}]$, ordered so that we have 
\begin{equation}
\tilde{v}_{i', i} > v_i^{(r)} > \dots > v_i^{(1)} > \tilde{v}_{i, i'} \geq v_i^{(0)} := v_i
\end{equation}
for some $r \geq 0$.  One sees using \Cref{cor isometric connected components} that the images $\bar{v}_i^{(s)}$ of the points $v_i^{(s)}$ under $\pi_*$ for $1 \leq s \leq r$ are likewise vertices in the interior of the path $[\tilde{\bar{v}}_{i, i'}, \tilde{\bar{v}}_{i', i}]$, ordered so that we have 
\begin{equation}
\tilde{\bar{v}}_{i', i} > \bar{v}_i^{(r)} > \dots > \bar{v}_i^{(1)} > \tilde{\bar{v}}_{i, i'} \geq \bar{v}_i^{(0)} := \bar{v}_i.
\end{equation}
By \Cref{prop dictionary}(d), the (non-distinguished) vertices $v_i^{(s)}$ for $1 \leq j \leq r$ correspond to all of the (\"{u}bereven) clusters $\mathfrak{s}_i^{(j)} \in \mathfrak{C}_0$ strictly containing $\mathfrak{s}_i$ and contained in the disc corresponding to $\tilde{\bar{v}}_{i', i}$; these satisfy the inclusions 
\begin{equation} \label{eq chain of cluster inclusions}
\mathfrak{s}_i^{(r)} \supsetneq \dots \supsetneq \mathfrak{s}_i^{(1)} \supsetneq \mathfrak{s}_i^{(0)} := \mathfrak{s}_i.
\end{equation}

\begin{lemma} \label{lemma hard-won}

With the above set-up and the notation of \Cref{thm main}, we have $\delta(\tilde{v}_{i, i'}, \bbLambda_{(i')}) = \sum_{0 \leq s \leq r} m_{\mathfrak{s}_i^{(s)}}$ and for any $r_0 \in \{1, \dots, r\}$, we have $\delta(v_i^{(r_0)}, \bbLambda_{(i')}) = \sum_{r_0 \leq s \leq r} m_{\mathfrak{s}_i^{(s)}}$.

\end{lemma}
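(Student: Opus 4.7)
The plan is to transport the question via the isometry of \Cref{thm distances between neighboring axes} to $\Sigma_{\mathcal{B}}$, use \Cref{cor uniqueness used in main theorem} to identify each $\mathfrak{s}_i^{(s)'}$, and recognize the resulting sum as telescoping.

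First I would observe that by the construction of $i'$ via the tree $\mathcal{I}$, the path $[\tilde{v}_{i, i'}, \tilde{v}_{i', i}]$ is disjoint from every $\bbLambda_{(l)}$ with $l \neq i, i'$; unique path-connectedness in $\Hyp$ then forces $\tilde{v}_{i', i}$ to be the projection onto $\bbLambda_{(i')}$ of both $\tilde{v}_{i, i'}$ and each $v_i^{(r_0)}$. Applying \Cref{thm distances between neighboring axes} therefore gives
\begin{equation*}
\delta(\tilde{v}_{i, i'}, \bbLambda_{(i')}) = \delta(\tilde{\bar{v}}_{i, i'}, \tilde{\bar{v}}_{i', i}), \qquad \delta(v_i^{(r_0)}, \bbLambda_{(i')}) = \delta(\bar{v}_i^{(r_0)}, \tilde{\bar{v}}_{i', i}).
\end{equation*}

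Next I would apply \Cref{cor uniqueness used in main theorem} with $\bar{w} = \bar{v}_i^{(s)}$. Since the interior of $[\tilde{\bar{v}}_{i, i'}, \tilde{\bar{v}}_{i', i}]$ avoids every $\bar{\bbLambda}_{(l)}$, every vertex of $\Sigma_{\mathcal{B}}$ in that interior is übereven by \Cref{prop dictionary}(d); for $0 \leq s < r$ the closest such übereven vertex to $\bar{v}_i^{(s)}$ is $\bar{v}_i^{(s+1)}$, and a quick comparison with candidate distinguished clusters on $\bar{\Lambda}_{(i')}$ shows that $\mathfrak{s}_i^{(s)'} = \mathfrak{s}_i^{(s+1)}$. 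For $s = r$, no übereven candidate remains, and $\mathfrak{s}_i^{(r)'}$ corresponds to the closest point $\bar{w}^\star \in \bar{\Lambda}_{(i')}$ to $\bar{v}_i^{(r)}$; by \Cref{prop v_i etc.}(c), the path $[\bar{v}_i^{(r)}, \bar{w}^\star]$ passes through $\tilde{\bar{v}}_{i', i}$ with $\delta(\tilde{\bar{v}}_{i', i}, \bar{w}^\star) = \frac{pv(p)}{p-1}$. By \Cref{prop dictionary}(b,d), the clusters $\mathfrak{s}_i^{(1)}, \dots, \mathfrak{s}_i^{(r)}$ are übereven while $\mathfrak{s}_i^{(0)} = \mathfrak{s}_i$ and $\mathfrak{s}_i^{(r)'}$ are distinguished, which fixes each indicator $u_{\{\cdot, \cdot\}}$.

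Plugging these data into the formula for $m_{\mathfrak{s}_i^{(s)}}$, using \Cref{prop dictionary}(a) to rewrite $d(\mathfrak{s}) + d(\mathfrak{c}) - 2d(\mathfrak{s} \vee \mathfrak{c})$ as $\delta(\eta_{D_{\mathfrak{s}}}, \eta_{D_{\mathfrak{c}}})$, together with $\delta(\bar{v}_i, \tilde{\bar{v}}_{i, i'}) = \frac{pv(p)}{p-1}$ from \Cref{prop v_i etc.}(c), I would obtain
\begin{equation*}
m_{\mathfrak{s}_i^{(s)}} =
\begin{cases}
\delta(\tilde{\bar{v}}_{i, i'}, \bar{v}_i^{(1)}) & \text{if } s = 0 < r,\\
\delta(\bar{v}_i^{(s)}, \bar{v}_i^{(s+1)}) & \text{if } 1 \leq s < r,\\
\delta(\bar{v}_i^{(r)}, \tilde{\bar{v}}_{i', i}) & \text{if } s = r \geq 1,\\
\delta(\tilde{\bar{v}}_{i, i'}, \tilde{\bar{v}}_{i', i}) & \text{if } s = r = 0,
\end{cases}
\end{equation*}
where in each boundary case the $\frac{pv(p)}{p-1}$-shifts coming from \Cref{prop v_i etc.}(c) exactly absorb the $(2 - u_{\{\mathfrak{s}_i^{(s)}, \mathfrak{s}_i^{(s)'}\}})\frac{pv(p)}{p-1}$ correction, since the indicator $u_{\{\cdot, \cdot\}}$ drops by one per distinguished endpoint. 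Summing over $s \in \{r_0, \dots, r\}$ (resp.\ $\{0, \dots, r\}$) and telescoping along the chain $\tilde{\bar{v}}_{i, i'} < \bar{v}_i^{(1)} < \dots < \bar{v}_i^{(r)} < \tilde{\bar{v}}_{i', i}$ then yields both identities. The main obstacle is purely bookkeeping: ensuring that at the two boundaries $s = 0$ and $s = r$ the drop in $u_{\{\cdot, \cdot\}}$ exactly matches the two extra $\frac{pv(p)}{p-1}$-shifts produced by the projections $\bar{v}_i \leadsto \tilde{\bar{v}}_{i, i'}$ and $\tilde{\bar{v}}_{i', i} \leadsto \bar{w}^\star$; once these cancellations are verified, the telescoping collapses the sum to the claimed distance.
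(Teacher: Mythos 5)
Your proof is correct and takes essentially the same approach as the paper: transport distances to $\Sigma_{\mathcal{B}}$ via the isometry of \Cref{thm distances between neighboring axes}, identify each $(\mathfrak{s}_i^{(s)})'$ from \Cref{cor uniqueness used in main theorem}, rewrite each $m_{\mathfrak{s}_i^{(s)}}$ as a distance on $[\tilde{\bar{v}}_{i,i'},\tilde{\bar{v}}_{i',i}]$ using the two $\tfrac{pv(p)}{p-1}$-shifts at the endpoints from \Cref{prop v_i etc.}, and telescope. The only cosmetic difference is that you display the boundary cancellations via an explicit case split for $m_{\mathfrak{s}_i^{(s)}}$, whereas the paper encodes them compactly in the single formula $m_{\mathfrak{s}_i^{(s)}}=\delta(\bar{v}_i^{(s)},\bar{v}_i^{(s+1)})-(2-u_s)\tfrac{pv(p)}{p-1}$ and observes that the correction terms sum to $2\tfrac{pv(p)}{p-1}$.
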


\begin{proof}
By construction, given $s \in \{0, \dots, r\}$ and for the moment writing $\bar{w}$ for $\bar{v}_i^{(s)}$, the point $\bar{w}'$ defined in \Cref{cor uniqueness used in main theorem} is $\bar{v}_i^{(s+1)}$ if $s \neq r$ and is the closest point in $\bar{\Lambda}_{(i')}$ to $\tilde{\bar{v}}_{i', i}$ if $s = r$.  In the latter case, let us denote this point by $\bar{v}_i^{(r+1)}$ and its corresponding cluster by $\mathfrak{s}_i^{(r+1)}$.  Subsequently, \Cref{cor uniqueness used in main theorem} tells us that we have $(\mathfrak{s}_i^{(s)})' = \mathfrak{s}_i^{(s+1)}$ for $0 \leq s \leq r$.

Now by construction of $\hat{\bar{v}}_i$, we have $\delta(\bar{v}_i, \hat{\bar{v}}_i) = \frac{pv(p)}{p-1}$, which thanks to \Cref{prop v_i etc.}(b) can be rewritten as $\delta(\bar{v}_i, \tilde{\bar{v}}_{i, i'}) = \frac{pv(p)}{p-1}$.  Meanwhile, by \Cref{prop v_i etc.}(c), we have $\delta(\bar{v}_i^{(r+1)}, \tilde{\bar{v}}_{i', i}) = \frac{pv(p)}{p-1}$.  We then have 
\begin{equation} \label{eq hard-won1}
\begin{aligned}
\delta(\tilde{v}_{i, i'}, \bbLambda_{(i')}) = \delta(\tilde{v}_{i, i'}, \tilde{v}_{i', i}) = \delta(\tilde{\bar{v}}_{i, i'}, \tilde{\bar{v}}_{i', i}) &\text{\indent by \Cref{thm distances between neighboring axes}} \\
= \delta(\bar{v}_i^{(0)}, \bar{v}_i^{(r+1)}) - 2\tfrac{pv(p)}{p-1} & \text{\indent by observations directly above} \\
= \sum_{s = 0}^r \big(\delta(\bar{v}_i^{(s)}, \bar{v}_i^{(s+1)}) - (2 - u_s) \tfrac{pv(p)}{p-1}\big) & \\
= \sum_{s = 0}^r \big(d(\mathfrak{s}_i^{(s)}) + d((\mathfrak{s}_i^{(s)})') - 2d(\mathfrak{s}_i^{(s)} \vee (\mathfrak{s}_i^{(s)})') - (2 - u_s) \tfrac{pv(p)}{p-1}\big) & \text{\indent using \Cref{prop dictionary}(a)} \\
= \sum_{s = 0}^r m_{\mathfrak{s}_i^{(s)}}, &
\end{aligned}
\end{equation}
where $u_s$ is a shorthand for $2 - \#(\{s, s + 1\} \cap \{0, r\}) = u_{\{\mathfrak{s}_i^{(s)}, \, \mathfrak{s}_i^{(s+1)}\}}$, the number of \"{u}bereven clusters among $\{\mathfrak{s}_i^{(s)}, \mathfrak{s}_i^{(s+1)}\}$.  Through a similar computation, for any $r_0 \in \{1, \dots, r\}$, we get 
\begin{equation} \label{eq hard-won2}
\delta(v_i^{(r_0)}, \bbLambda_{(i')}) = \delta(v_i^{(r_0)}, \tilde{v}_{i', i}) = \delta(\bar{v}_i^{(r_0)}, \tilde{\bar{v}}_{i', i}) = \delta(\bar{v}_i^{(r_0)}, \bar{v}_i^{(r+1)}) - \tfrac{pv(p)}{p-1} = \sum_{s = r_0}^r m_{\mathfrak{s}_i^{(s)}}.
\end{equation}
\end{proof}

The main step remaining in the proof is to verify the following.

\begin{lemma} \label{lemma monodromy pairing equals sum of transvection pairings}

With the elements $\mathfrak{v}_i \in \mathbb{T}_\ell$ defined as above, the notation of $\chi_\mathfrak{w} : \bar{\Gamma} \to \zz_\ell$ for any element $\mathfrak{w} \in \mathbb{T}_\ell$, and all of the notation used in the statement of \Cref{thm main}, we have the formula 
\begin{equation} \label{eq monodromy pairing equals sum of transvection pairings}
v(c_\gamma(\gamma')) = \sum_{\mathfrak{s} \in \mathfrak{C}_0} \Big(\sum_{0 \leq n \leq p - 1} m_{\mathfrak{s}} \chi_{\zeta_p^n \mathfrak{w}_{\mathfrak{s}}}(\gamma) \chi_{\zeta_p^n \mathfrak{w}_{\mathfrak{s}}}(\gamma')\Big) \ \ \text{for all} \ \gamma, \gamma' \in \bar{\Gamma}.
\end{equation}

\end{lemma}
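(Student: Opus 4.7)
The plan is to reduce (\ref{eq monodromy pairing equals sum of transvection pairings}) to an identity on the $\zz$-basis of $\bar{\Gamma}$ furnished by \Cref{cor basis of barGamma}, and then to match both sides using the explicit formulas from \Cref{thm values of pairing} (for the left-hand side) and \Cref{lemma zeta_p^n v_i} (for the right-hand side).  Since both sides are $\zz$-bilinear in $(\gamma, \gamma')$, it suffices to check the formula for $\gamma = \gamma_{i, i', m}$ and $\gamma' = \gamma_{j, j', n}$ with $1 \leq i, j \leq h$ and $0 \leq m, n \leq p - 1$.

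The first step is to evaluate the inner character products on the right-hand side.  By \Cref{lemma zeta_p^n v_i}, the character $\chi_{\zeta_p^{n_0} \mathfrak{v}_k}(\gamma_{i, i', m})$ vanishes unless $k = i$, so only the summand $\mathfrak{v}_i$ of $\mathfrak{w}_{\mathfrak{s}}$ (when $i$ lies in its support) can contribute to $\chi_{\zeta_p^{n_0} \mathfrak{w}_{\mathfrak{s}}}(\gamma_{i, i', m})$, and symmetrically for $\gamma_{j, j', n}$.  A direct case analysis using (\ref{eq zeta_p^n v_i}) then yields
\begin{equation*}
\sum_{n_0 = 0}^{p-1} \chi_{\zeta_p^{n_0} \mathfrak{v}_i}(\gamma_{i, i', m}) \chi_{\zeta_p^{n_0} \mathfrak{v}_j}(\gamma_{j, j', n}) = \begin{cases} 2 & \text{if } n \equiv m \pmod{p}, \\ -\epsilon & \text{if } n \equiv m \pm 1 \pmod{p}, \\ 0 & \text{otherwise}, \end{cases}
\end{equation*}
with the same constant $\epsilon$ as in \Cref{thm values of pairing}.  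This collapses the right-hand side to the above character factor times the geometric sum $\sum_{\mathfrak{s}} m_{\mathfrak{s}}$ taken over those $\mathfrak{s} \in \mathfrak{C}_0$ for which both $i$ and $j$ lie in the support of $\mathfrak{w}_{\mathfrak{s}}$.

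The second and main step is to identify this set of clusters geometrically.  Translating the support condition via \Cref{prop dictionary}, an index $k$ belongs to the support of $\mathfrak{w}_{\mathfrak{s}}$ precisely when the vertex $\bar{v}_k$ of $\Sigma_{\mathcal{B}}$ lies weakly below $\bar{w}_{\mathfrak{s}}$ with no $\bar{v}_l$ strictly between.  Using \Cref{prop v_i etc.}, \Cref{thm disjointness of neighborhoods}, and the fact that \"{u}bereven vertices lie outside every $\bar{\bbLambda}_{(l)}$ (\Cref{prop dictionary}(d)), I will show that when $i \neq j$, having both $i$ and $j$ simultaneously in the support forces the paths in $\Sigma_{\mathcal{B}}$ going up from $\bar{v}_i$ and $\bar{v}_j$ to merge outside every $\bar{\bbLambda}_{(l)}$ and then travel together into $\bar{\bbLambda}_{(i')} = \bar{\bbLambda}_{(j')}$; in particular this forces $i' = j'$, so the case $i' \neq j'$ contributes nothing, matching the vanishing of the left-hand side from \Cref{thm values of pairing}.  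When $i' = j'$, the qualifying clusters correspond to vertices of $\Sigma_{\mathcal{B}}$ on the subpath from $\tilde{\bar{v}}_i \vee \tilde{\bar{v}}_j$ up to (but not entering) $\bar{\bbLambda}_{(i')}$, together with $\mathfrak{s}_i$ itself in the case $i = j$.  The sum $\sum_{\mathfrak{s}} m_{\mathfrak{s}}$ along this chain is then computed by \Cref{lemma hard-won}---by its first identity in the case $i = j$ and by its second identity in the case $i \neq j$, applied after identifying $\tilde{\bar{v}}_i \vee \tilde{\bar{v}}_j$ with one of the vertices $\bar{v}_i^{(r_0)}$---to give $\delta(\tilde{v}_i \vee \tilde{v}_j, \bbLambda_{(i')})$, with \Cref{thm distances between neighboring axes} providing the isometry between the $\mathcal{B}$- and $S$-side distances.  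Comparing with the case distinction in \Cref{thm values of pairing} then completes the proof.

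The main obstacle will be the geometric identification of the support of $\mathfrak{w}_{\mathfrak{s}}$ at each branching vertex of $\Sigma_{\mathcal{B}}$---in particular, ruling out contributions from clusters $\mathfrak{s}$ whose vertex $\bar{w}_{\mathfrak{s}}$ enters $\bar{\bbLambda}_{(i')}$ or lies above it, since then the path from $\bar{v}_i$ (or $\bar{v}_j$) to $\bar{w}_{\mathfrak{s}}$ would pass through the extraneous vertex $\bar{v}_{i'}$---together with verifying that the meeting vertex of the $i$- and $j$-branches coincides with one of the $\bar{v}_i^{(s)}$ so that \Cref{lemma hard-won} applies directly.  The key structural inputs for handling these points are the bijective correspondence between clusters and vertices of $\Sigma_{\mathcal{B}}$ from \Cref{prop dictionary}, the characterization of $\mathfrak{s}'$ in \Cref{cor uniqueness used in main theorem}, and the explicit depth computations of \Cref{lemma hard-won}.
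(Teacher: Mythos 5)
Your proposal is correct and follows essentially the same route as the paper's proof: reduce via \Cref{cor basis of barGamma} to computing both sides on pairs of basis generators $\gamma_{i,i',\mu},\gamma_{j,j',\nu}$; evaluate the inner character sums via \Cref{lemma zeta_p^n v_i}, yielding the factor $2$, $-\epsilon$, or $0$ according as $\nu\equiv\mu$, $\nu\equiv\mu\pm1$, or otherwise; use \Cref{lemma intersection of fundamental domains} (equivalently, the corresponding structure in $\Sigma_{\mathcal{B}}$ via $\pi_*$, which is what you propose) to show the qualifying clusters form exactly the chain $\mathfrak{s}_i^{(r_0)}\subsetneq\cdots\subsetneq\mathfrak{s}_i^{(r)}$ with $i'=j'$; and sum via \Cref{lemma hard-won} to match \Cref{thm values of pairing}. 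Your bookkeeping choice to argue the support condition directly in $\Sigma_{\mathcal{B}}$ rather than in $\Sigma_S$ is a cosmetic difference; the geometric content is carried across by $\pi_*$ either way.
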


\begin{proof}

Thanks to \Cref{cor basis of barGamma}, in order to get the equality of pairings on $\bar{\Gamma}$ given in (\ref{eq monodromy pairing equals sum of transvection pairings}), it suffices to prove that it holds for $\gamma = \gamma_{i, i', \mu}, \gamma' = \gamma_{j, j', \nu}$ for $1 \leq i, j \leq h$ and $\mu, \nu \in \zz$.  Below we freely use \Cref{lemma zeta_p^n v_i} in computing the images of these elements of $\bar{\Gamma}$ under the characters $\chi_{\zeta_p^n \mathfrak{w}_s}$.

The only clusters $\mathfrak{s} \in \mathfrak{C}_0$ and values of $n \in \zz$ such that we have $\chi_{\zeta_p^n \mathfrak{w}_{\mathfrak{s}}}(\gamma_{i, i', \mu}) \neq 0$ are the clusters $\mathfrak{s}_i^{(s)}$ appearing in (\ref{eq chain of cluster inclusions}) and the integers $n$ equivalent to $-\mu$ or $-\mu + 1$ modulo $p$ with 
\begin{equation} \label{eq summands}
\chi_{\zeta_p^{-\mu} \mathfrak{w}_{\mathfrak{s}}}(\gamma_{i, i', \mu}) = -1, \ \ \chi_{\zeta_p^{-\mu + 1} \mathfrak{w}_{\mathfrak{s}}}(\gamma_{i, i', \mu}) = 1.
\end{equation}
The only clusters $\mathfrak{s} \in \mathfrak{C}_0$ and integers $n \in \zz$ such that the summand $m_{\mathfrak{s}} \chi_{\zeta_p^n \mathfrak{w}_{\mathfrak{s}}}(\gamma_{i, i', \mu}) \chi_{\zeta_p^n \mathfrak{w}_{\mathfrak{s}}}(\gamma_{j, j', \nu})$ does not necessarily equal $0$ are therefore the clusters which either equal $\mathfrak{s}_i$ (in the case that $i = j$) or equal $\mathfrak{s}_i^{(r)} = \mathfrak{s}_j^{(t)}$ for some $r, t \geq 1$, and the integers $n$ equivalent modulo $p$ to an integer in $\{-\mu, -\mu + 1\} \cap \{-\nu, -\nu + 1\}$ (thus requiring in particular that $\nu \equiv \mu$ or $\nu \equiv \mu \pm 1$ (mod $p$)).  Note that if $i \neq j$, we have $\mathfrak{s}_i^{(r)} = \mathfrak{s}_j^{(t)}$ (or equivalently $v_i^{()} = v_j^{(t)}$) for some $r, t \geq 1$ only if the intersection $v_i^{(r)} \in [\tilde{v}_{i, i'}, \tilde{v}_{i', i}] \cap [\tilde{v}_{j, j'}, \tilde{v}_{j', j}]$ is a path with interior and $v_i^{(r)} \geq \tilde{v}_{i, i'} \vee \tilde{v}_{j, j'}$, which by \Cref{lemma intersection of fundamental domains}(b) implies that $i' = j'$.  In particular, the point $\tilde{v}_{i, i'} \vee \tilde{v}_{j, j'}$ would have to be in the interior of $[\tilde{v}_{i, i'}, \tilde{v}_{i', i} = \tilde{v}_{j', j}]$, implying that $\tilde{v}_{i, i'} \vee \tilde{v}_{j, j'} \notin \bar{\bbLambda}_{(i')}$.

The above paragraph implies that if we have $\tilde{v}_{i', i} \neq \tilde{v}_{j', j}$ or if $\nu$ is not equivalent modulo $p$ to $\mu$ or $\mu \pm 1$, then the right-hand side of the formula in (\ref{eq monodromy pairing equals sum of transvection pairings}) comes out to $0$.  Now suppose conversely that $\tilde{v}_{i, i'} \vee \tilde{v}_{j, j'} \leq \tilde{v}_{i', i} = \tilde{v}_{j', j}$ (the second equality of which implies that $i' = j'$ by \Cref{thm disjointness of neighborhoods}) and that we have $\nu \equiv \mu$ or $\nu \equiv \mu \pm 1$ (mod $p$); we proceed to compute the right-hand side of (\ref{eq monodromy pairing equals sum of transvection pairings}).

If $i = j$, then we have $\bar{v}_i \vee \bar{v}_j = \bar{v}_i = \bar{v}_i^{(0)}$.  If $i \neq j$, then we clearly have (using \Cref{prop v_i etc.}(b) and \Cref{thm disjointness of neighborhoods}) the equalities $\tilde{v}_{i, i'} \vee \tilde{v}_{j, j'} = \hat{v}_i \vee \hat{v}_j = v_i \vee v_j$.  Applying \Cref{cor isometric connected components}, we get $\tilde{\bar{v}}_{i, i'} \vee \tilde{\bar{v}}_{j, j'} = \bar{v}_i \vee \bar{v}_j$; by \Cref{prop dictionary}(a), this point is the one corresponding to the cluster $\mathfrak{s}_i \vee \mathfrak{s}_j$ and is therefore a vertex satisfying $\tilde{\bar{v}}_{j', j} = \tilde{\bar{v}}_{i', i} > \bar{v}_i \vee \bar{v}_j > \tilde{\bar{v}}_{i, i'}, \tilde{\bar{v}}_{j, j'}$.  In either case, we may therefore write $\bar{v}_i \vee \bar{v}_j = \bar{v}_i^{(r_0)} = \bar{v}_j^{(t_0)}$ for some $r_0, t_0 \geq 0$.  It follows that the only clusters $\mathfrak{s}$ such that the summands $m_{\mathfrak{s}} \chi_{\zeta_p^n \mathfrak{w}_{\mathfrak{s}}}(\gamma_{i, i', \mu}) \chi_{\zeta_p^n \mathfrak{w}_{\mathfrak{s}}}(\gamma_{j, j', \nu})$ for $0 \leq n \leq p - 1$ do not necessarily each equal $0$ are $\mathfrak{s}_i^{(r_0)} \subsetneq \dots \subsetneq \mathfrak{s}_i^{(r)}$, as these are the only clusters $\mathfrak{s} \supseteq \mathfrak{s}_i \vee \mathfrak{s}_j = \mathfrak{s}_i^{(r_0)}$ with $\mathfrak{s} \not\supset \mathfrak{s}_l$ for any $l \neq i, j$ such that $\mathfrak{s}_l$ contains $\mathfrak{s}_i$ or $\mathfrak{s}_j$.

Suppose that $\mu \equiv \nu$ (mod $p$).  Then for a given $\mathfrak{s} \in \{\mathfrak{s}_i^{(s)}\}_{r_0 \leq s \leq r}$, from (\ref{eq summands}) we have 
\begin{equation}
\begin{aligned}
m_{\mathfrak{s}} \chi_{\zeta_p^{-\mu} \mathfrak{w}_{\mathfrak{s}}}(\gamma_{i, i', \mu}) \chi_{\zeta_p^{-\mu} \mathfrak{w}_{\mathfrak{s}}}(\gamma_{j, j', \nu}) &= m_{\mathfrak{s}} (-1) (-1) = m_{\mathfrak{s}}, \\
m_{\mathfrak{s}} \chi_{\zeta_p^{-\mu + 1} \mathfrak{w}_{\mathfrak{s}}}(\gamma_{i, i', \mu}) \chi_{\zeta_p^{-\mu + 1} \mathfrak{w}_{\mathfrak{s}}}(\gamma_{j, j', \nu}) &= m_{\mathfrak{s}} (1) (1) = m_{\mathfrak{s}},
\end{aligned}
\end{equation}
and the analogous summand for $n$ not equivalent to $-\mu$ or $-\mu + 1$ modulo $p$ equals $0$.  The summands in the inner sum of the right-hand side of (\ref{eq monodromy pairing equals sum of transvection pairings}) for this cluster $\mathfrak{s}$ therefore add up to $2m_{\mathfrak{s}}$.  Then, by \Cref{lemma hard-won}, the whole sum in that formula equals 
\begin{equation}
2\sum_{s = r_0}^r m_{\mathfrak{s}_i^{(s)}} = 2\delta(\tilde{v}_{i, i'} \vee \tilde{v}_{j, j'}, \bar{\bbLambda}_{(i')}).
\end{equation}

Now suppose that $\nu \equiv \mu + 1$ (mod $p$).  Then for a given $\mathfrak{s} \in \{\mathfrak{s}_i^{(s)}\}_{r_0 \leq s \leq r}$, from (\ref{eq summands}) we have 
\begin{equation}
m_{\mathfrak{s}} \chi_{\zeta_p^{-\mu} \mathfrak{w}_{\mathfrak{s}}}(\gamma_{i, i', \mu}) \chi_{\zeta_p^{-\mu} \mathfrak{w}_{\mathfrak{s}}}(\gamma_{j, j', \nu}) = m_{\mathfrak{s}} (-1) (1) = -m_{\mathfrak{s}},
\end{equation}
and that the analogous summand for $n$ not equivalent to $-\mu$ modulo $p$ equals $0$.  The symmetric statement then clearly holds in the case that $\mu \equiv \nu + 1$ (mod $p$).  Noting that these two cases are equivalent when $p = 2$ but mutually exclusive otherwise, we see that the summands in the inner sum of the right-hand side of (\ref{eq monodromy pairing equals sum of transvection pairings}) for this cluster $\mathfrak{s}$ therefore add up to $-m_{\mathfrak{s}}$ (resp. $-2m_{\mathfrak{s}}$) if $p \neq 2$ (resp. $p = 2$) and $\nu \equiv \mu \pm 1$ (mod $p$).  Then, by \Cref{lemma hard-won}, the whole sum in that formula equals 
\begin{equation}
-\epsilon \sum_{s = r_0}^r m_{\mathfrak{s}_i^{(s)}} = -\epsilon\delta(\tilde{v}_{i, i'} \vee \tilde{v}_{j, j'}, \bar{\bbLambda}_{(i')}),
\end{equation}
with $\epsilon = 2$ if $p = 2$ and $\epsilon = 1$ otherwise.

In each of these cases, the value computed matches that given by \Cref{thm values of pairing} for $v(c_{\gamma_{i, i', \mu}}(\gamma_{j, j', \nu}))$, and the lemma is proved.
\end{proof}

Let $\sigma \in I_K$ be as in the statement of \Cref{thm main},  Now by applying \Cref{cor action of sigma pairing} to the left-hand side and \Cref{prop explicit transvection}(b) to the right-hand side of (\ref{eq monodromy pairing equals sum of transvection pairings}), and by applying the polarization $\varphi : \Lambda \stackrel{\sim}{\to} \Lambda^\vee$ (tensored with $\zz_\ell$) to the first argument for each pairing, \Cref{lemma monodromy pairing equals sum of transvection pairings} tells us that 
\begin{equation}
\langle \gamma, \gamma' \rangle_{\rho_\ell(\sigma), \ell} = \sum_{\mathfrak{s} \in \mathfrak{C}_0} \Big(\sum_{0 \leq n \leq p - 1} m_{\mathfrak{s}} \langle \gamma, \gamma' \rangle_{t_{\mathfrak{w}_{\mathfrak{s}}}, \ell} \Big) \ \ \text{for all} \ \gamma, \gamma' \in \bar{\Gamma}.
\end{equation}
Now the formula in (\ref{eq main product of transvections}) for $\rho_\ell(\sigma)$ claimed by \Cref{thm main} follows from \Cref{rmk pairing from u}(a).

\section{The $(1 - \zeta_p)$-torsion subgroup of the Jacobian} \label{sec torsion}

The goal of this section is to explicitly describe certain elements of the $p$-torsion group $J[p]$ as points in the quotient $T / \Lambda$ constructed in \S\ref{sec background jacobians}, namely those represented by divisors of the form $(\alpha_i, 0) - (\beta_i, 0) \in \Div_0(C)$.  Apart from the self-contained and elementary \Cref{lemma automorphy van steen}, the results of this section are used only to prove the property of the basis of the submodule $\mathbb{T}_\ell \subset T_\ell(J)$ asserted in the last statement of \Cref{thm main} (which only pertains to the $\ell = p$ case), and nothing else in this paper depends on them.

As the automorphism $\zeta_p$ of the superelliptic curve $C$ fixes each of its ramification points over $\proj_K^1$, any divisor in $\Div_0(C)$ of the form $(P) - (P')$, where $P, P' \in C(K)$ are ramification points, is also fixed by $\zeta_p$.  Therefore, the image of such a divisor in the Jacobian $J$ is killed by the endomorphism $1 - \zeta_p \in \End(J)$.  (In fact, the results of \cite[\S2.3]{arul2020explicit} describe the so-called $(1 - \zeta_p)$-torsion subgroup $J[1 - \zeta_p]$ as being generated by the images in $J$ of the set of degree-$0$ divisors of the form $(P) - (\infty)$ for ramification points $P \neq \infty$, where $\infty$ is the ramification point lying over the branch point $\beta_0 = \infty$; this is shown in a situation where the defining polynomial $f(x)$ appearing in (\ref{eq superelliptic}) is separable rather than for split degenerate superelliptic curves, but the arguments involved work in the same way when $C$ is determined by an equation of the form in (\ref{eq split degenerate superelliptic}).)  In particular, since we have $(1 - \zeta_p) \mid p$ in the ring $\zz[\zeta_p] \subset \End(J)$, these elements lie in the $p$-torsion subgroup $J[p]$ and generate a $\zz/p\zz$-submodule of $J[p]$ of rank $\frac{2g}{p-1} = 2h$.  (The fact that these points are $p$-torsion can also be checked directly by noting that for any ramification point $P = (\alpha, 0) \in C(K)$, the divisor $p(P) - p(\infty)$ is principal, being the divisor of the function $x - \alpha$.)

The main result of this section, \Cref{thm certain p-torsion points} below, describes the images under the analytic Abel-Jacobi map of points of $J[1 - \zeta_p] \subset J[p]$ represented by the divisors $(\alpha_i, 0) - (\beta_i, 0)$ for $0 \leq i \leq h$ (we remark that these divisors do not generate all of $J[1 - \zeta_p]$ but do generate a rank-$h$ $\zz/p\zz$-submodule of $J[1 - \zeta_p]$).  In order to state this theorem, for a given index $i \in \{0, \dots, h\}$ and a given element $\gamma \in \Gamma_0$, we define $\sigma_i(\gamma) \in \zz / p\zz$ to be the reduction modulo $p$ of the sum of exponents of the letter $s_i$ appearing in the element $\gamma$ expressed as a word on the generators $s_j$ of $\Gamma_0$.  It is clear from the fact that the only relations between these generators are given by $s_0^p = \cdots = s_h^p = 1$ that the map $\sigma_i : \Gamma_0 \to \zz / p\zz$ thus defined is a well-defined homomorphism.  Noting that this homomorphism is trivial on the commutator subgroup of $\Gamma \lhd \Gamma_0$, we see that it induces a homomorphism $\bar{\Gamma} \to \zz / p\zz$ which, by slight abuse of notation, we again denote by $\sigma_i$.  Identifying the group $\zz / p\zz$ with the subgroup $\boldsymbol{\mu}_p < K^\times$ of $p$th roots of $1$ (by identifying $1$ modulo $p$ with the primitive root $\zeta_p \in \boldsymbol{\mu}_p$), we consider $\sigma_i$ to be an element of $T = \Hom(\bar{\Gamma}, K^\times)$.

\begin{thm} \label{thm certain p-torsion points}

For $0 \leq i \leq h$, we have 
\begin{equation}
\mathrm{AJ}((\alpha_i, 0) - (\beta_i, 0)) = \sigma_i^{n_i},
\end{equation}
where $n_i \in \zz$ satisfies $m_i n_i \equiv 1$ (mod $p$) (with $m_i$ the exponent appearing in (\ref{eq split degenerate superelliptic})).

\end{thm}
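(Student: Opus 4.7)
The plan is to unravel the analytic Abel--Jacobi construction. The ramification points $(\alpha_i, 0), (\beta_i, 0) \in C(K)$ respectively lift to the fixed points $a_i, b_i \in \Omega$ of $s_i \in \Gamma_0$ under the quotient map $\Omega(\bar{K}) \twoheadrightarrow C(\bar{K})$, so applying (\ref{eq Abel-Jacobi}) with $s = 1$ gives
\begin{equation*}
\mathrm{AJ}((\alpha_i, 0) - (\beta_i, 0)) = c_{a_i, b_i} \in T = \Hom(\bar{\Gamma}, K^\times).
\end{equation*}
The task then becomes proving $c_{a_i, b_i}(\gamma) = \zeta_p^{n_i \sigma_i(\gamma)}$ for every $\gamma \in \Gamma$. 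I would do this by extending the automorphy factor from $\Gamma$ to all of $\Gamma_0$.

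Concretely, for each generator $s_j$ of $\Gamma_0$ I would exhibit a constant $\tilde{c}(s_j) \in K^\times$ such that $\Theta_{a_i, b_i}(s_j(z)) = \tilde{c}(s_j)\,\Theta_{a_i, b_i}(z)$. By \Cref{lemma automorphy van steen}, the left-hand side equals $c \cdot \Theta_{s_j^{-1}(a_i),\, s_j^{-1}(b_i)}(z)$ for some scalar $c \in K^\times$. The crucial point is that $s_j^{-1} s_i \in \Gamma$, since its total exponent $-1 + 1 = 0$ is divisible by $p$; because $s_i$ fixes $a_i$ and $b_i$, we have $s_j^{-1}(a_i) = (s_j^{-1} s_i)(a_i)$ and $s_j^{-1}(b_i) = (s_j^{-1} s_i)(b_i)$, both lying in the $\Gamma$-orbits of $a_i$ and $b_i$ respectively. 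A straightforward reindexing of the defining infinite product then shows $\Theta_{\gamma(a), \gamma(b)} = \Theta_{a, b}$ for every $\gamma \in \Gamma$, and so the automorphy collapses to $\Theta_{a_i, b_i}(s_j(z)) = \tilde{c}(s_j)\,\Theta_{a_i, b_i}(z)$ with $\tilde{c}(s_j) \in K^\times$. Multiplicativity then makes $\tilde{c} : \Gamma_0 \to K^\times$ a homomorphism restricting to $c_{a_i, b_i}$ on $\Gamma$.

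It remains to identify $\tilde{c}(s_j)$ for each $j$. For $j \neq i$, I would evaluate the functional equation at $z = a_j$: since $s_j$ fixes $a_j$, this forces $\tilde{c}(s_j) = 1$, provided $\Theta_{a_i, b_i}(a_j) \in K^\times$. The latter holds because the zeros and poles of $\Theta_{a_i, b_i}$ comprise the $\Gamma$-orbits of $a_i$ and $b_i$, and $a_j$ cannot lie in either: its image in $\proj_K^1 = \Omega/\Gamma_0$ is the branch point $\alpha_j$, distinct from $\alpha_i$ and $\beta_i$. For $j = i$, I would pass to the coordinate $\phi(z) = (z - a_i)/(z - b_i)$ (with the obvious modification if $b_0 = \infty$), in which $s_i$ acts as $\phi \mapsto \zeta_p^{n_i}\phi$ by the cited \cite[Proposition 3.2]{van1982galois}. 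Expanding $\Theta_{a_i, b_i}(s_i(z))$ factor-by-factor, the $\gamma = 1$ factor contributes exactly $\zeta_p^{n_i}$, and each remaining factor indexed by $\gamma \in \Gamma \smallsetminus \{1\}$ pairs with the factor indexed by $s_i^{-1}\gamma s_i$ to give a cancellation under the bijection $\gamma \mapsto s_i^{-1}\gamma s_i$ of $\Gamma \smallsetminus \{1\}$; hence $\tilde{c}(s_i) = \zeta_p^{n_i}$.

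Combining these values of the generators by multiplicativity then yields, for any $\gamma \in \Gamma$,
\begin{equation*}
c_{a_i, b_i}(\gamma) = \tilde{c}(\gamma) = \prod_j \tilde{c}(s_j)^{\sigma_j(\gamma)} = \zeta_p^{n_i \sigma_i(\gamma)} = \sigma_i^{n_i}(\gamma),
\end{equation*}
which is the desired identity. The main technical obstacle I anticipate is the term-by-term cancellation in the computation of $\tilde{c}(s_i)$: although the bijection $\gamma \mapsto s_i^{-1}\gamma s_i$ of $\Gamma \smallsetminus \{1\}$ makes the pairing formal, one must carefully separate out the distinguished $\gamma = 1$ factor and handle the conventions for $\Theta$ at $\infty$ (especially in the $i = 0$ case with $b_0 = \infty$) to extract the precise constant $\zeta_p^{n_i}$ with the correct exponent.
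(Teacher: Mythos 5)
Your proposal is correct in substance but takes a genuinely different route from the paper, and the comparison is worth spelling out. You extend the automorphy factor from $\Gamma$ to a homomorphism $\tilde{c}: \Gamma_0 \to K^\times$ (using \Cref{lemma automorphy van steen} together with $s_j^{-1}s_i \in \Gamma$ and the reindexing identity $\Theta_{\gamma(a),\gamma(b)} = \Theta_{a,b}$), and then compute $\tilde{c}$ on each generator $s_j$. The evaluation trick at $z = a_j$ to force $\tilde{c}(s_j) = 1$ for $j \neq i$ is clean and not in the paper; you then recover $c_{a_i,b_i} = \tilde{c}|_\Gamma$ from multiplicativity and the abelianization $(\zz/p\zz)^{h+1}$ of $\Gamma_0$. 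The paper instead picks the explicit free generating set $\{\gamma_{j,i,n}\}_{j\neq i,\,1\leq n\leq p-1}$ of $\Gamma$, evaluates the theta quotient at the conveniently chosen $z = s_i^n(a_j)$ (fixed by $\gamma_{j,i,n}$ since $s_j$ fixes $a_j$), and reads off $c_{a_i,b_i}(\gamma_{j,i,n}) = \zeta_p^{-n_i}$ by applying the automorphy constant for $s_i$ twice. Both routes therefore bottleneck on computing the single constant $\tilde{c}(s_i)$, which is exactly the paper's \Cref{lemma automorphy for s}.

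On that one step I'd push back on your phrasing, though not on your plan. The cancellation you describe is not a term-by-term pairing: in the $\phi$-coordinate each $\gamma$-factor of $\Theta_{a_i,b_i}(s_i(z))$ equals the $\gamma'$-factor of $\Theta_{a_i,b_i}(z)$ (with $\gamma' = s_i^{-1}\gamma s_i$) multiplied by a nontrivial scalar of the form $C_\gamma / C_{\gamma'}$ with $C_\gamma = (b_i - \gamma(a_i))/(b_i - \gamma(b_i))$, and $C_\gamma C_{\gamma'}^{-1}$ is not $1$ for an individual $\gamma$. What actually kills the extra scalars is that $\prod_{\gamma \neq 1} C_\gamma$ is a convergent product (it is, in effect, the residue-type limit of $\Theta_{a_i,b_i}$ at $b_i$), so $\prod_{\gamma\neq 1}C_\gamma/C_{\gamma'} = 1$ by reindexing the convergent product under the bijection $\gamma \mapsto s_i^{-1}\gamma s_i$. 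That global rearrangement is the real content. The paper reaches the same conclusion by a local mechanism instead: it partitions $\Gamma\smallsetminus\{1\}$ into $p$-element orbits under conjugation by $s_i$ and shows (\Cref{lemma clever cancelling}) that each orbit's contribution telescopes to $1$, which avoids having to invoke convergence of a separate auxiliary subproduct. If you write up your version, you should make that reindexing step explicit (including verifying the convergence of $\prod_{\gamma\neq 1}C_\gamma$, and handling the $b_0 = \infty$ conventions you already flagged) rather than presenting it as a pairing of individual factors.
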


The following corollary gives us the last statement of \Cref{thm main}.

\begin{cor} \label{cor integral combination of v_i's}

The generators $\mathfrak{v}_1, \dots, \mathfrak{v}_h$ given by \Cref{thm main} in the $\ell = p$ case satisfy that the image of the element $(\zeta_p + 2\zeta_p^2 + \dots + (p - 1)\zeta_p^{p - 1})\mathfrak{v}_i \in \mathbb{T}_p \subset T_p(J)$ modulo $p$ in $J[p]$ is represented by the divisor $\sum_j m_j ((\alpha_j, 0) - (\beta_j, 0)) \in \Div_0(C)$, where the sum is taken over all indices $j$ satisfying $d(\mathfrak{s}_i) - d(\mathfrak{s}_i \vee \mathfrak{s}_j) \leq \frac{pv(p)}{p-1}$ and $m_j$ is the exponent appearing in (\ref{eq split degenerate superelliptic}).

\end{cor}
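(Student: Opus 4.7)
The plan is to verify the identity in $J[p]$ by representing both sides as characters in $T[p] = \Hom(\bar{\Gamma}, \boldsymbol{\mu}_p)$ (via the inclusion $T[p] \hookrightarrow J[p]$) and checking they agree on the $\zz$-basis $\{\gamma_{k, k', l}\}_{1 \leq k \leq h,\, 1 \leq l \leq p-1}$ of $\bar{\Gamma}$ furnished by \Cref{cor basis of barGamma}. Setting $e = \zeta_p + 2\zeta_p^2 + \cdots + (p-1)\zeta_p^{p-1}$, one observes that $e \mathfrak{v}_i \in \mathbb{T}_p = T_p(T)$ descends modulo $p$ to an element of $T[p]$, whose corresponding character (via the compatible system $(\zeta_{p^n})$ identifying $T_p(\bar{K}^\times)$ with $\zz_p$) is $\chi_{e\mathfrak{v}_i}$ reduced mod $p$. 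Expanding via \Cref{lemma zeta_p^n v_i}, summation over $n \in \{1, \dots, p-1\}$ gives $\chi_{e \mathfrak{v}_i}(\gamma_{k, k', l}) = 0$ for $k \neq i$; while for $k = i$ the only contributing terms give $\chi_{e \mathfrak{v}_i}(\gamma_{i, i', 1}) = -(p-1)$ and, for $l \in \{2, \dots, p-1\}$, $\chi_{e \mathfrak{v}_i}(\gamma_{i, i', l}) = -(p-l) + (p-l+1) = 1$. In every case the result is $\equiv [k = i] \pmod p$.

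For the right-hand side, write $S_i \subseteq \{1, \dots, h\}$ for the index set of the sum in the statement. By \Cref{thm certain p-torsion points} applied to each summand, the class in $J[p]$ of $\sum_{j \in S_i} m_j ((\alpha_j, 0) - (\beta_j, 0))$ is represented by the character $\prod_{j \in S_i} \sigma_j^{n_j m_j} \in T[p]$. The elementary identity $\sigma_j(\gamma_{k, k', l}) = [j = k] - [j = k']$ (immediate from the definition of $\sigma_j$ as counting $s_j$-exponents) combined with the congruence $n_j m_j \equiv 1 \pmod p$ gives that this character sends $\gamma_{k, k', l}$ to $[k \in S_i] - [k' \in S_i] \pmod p$, with the convention that $0 \notin S_i$ so that $k' = 0$ contributes nothing. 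Matching this with the left-hand side reduces to showing: (i) $i' \notin S_i$; and (ii) $k \in S_i \iff k' \in S_i$ for each $k \in \{1, \dots, h\} \setminus \{i\}$.

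The main obstacle is the underlying geometric identification $S_i = \{i\} \cup \{j : j \text{ is a proper descendant of } i \text{ in } \mathcal{I}\}$, from which (i) and (ii) follow by routine tree-combinatorics (the parent of $i$ is not a descendant, and the parent of any descendant of $i$ is either $i$ itself or another descendant). To establish this characterization, \Cref{prop dictionary}(a) translates the defining inequality of $S_i$ into $\delta(\bar{v}_i, \bar{v}_i \vee \bar{v}_j) \leq \tfrac{pv(p)}{p-1}$; since $\Hyp$ is a tree, both $\bar{v}_i \vee \bar{v}_j$ and $\hat{\bar{v}}_i$ lie on the unique geodesic from $\bar{v}_i$ to $\eta_\infty$, so this condition is equivalent to $\bar{v}_i \vee \bar{v}_j \leq \hat{\bar{v}}_i$ in the order $>$. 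Using the disjointness of the tubular neighborhoods $\bar{\bbLambda}_{(l)}$ (\Cref{thm disjointness of neighborhoods}) to rule out $\hat{\bar{v}}_i$ lying inside another tube, this is in turn equivalent to $\hat{\bar{v}}_j \leq \hat{\bar{v}}_i$, which by the defining property of the edges of $\mathcal{I}$ is equivalent to $j$ being $i$ or a proper descendant of $i$ in $\mathcal{I}$. Combined with the character computation of the first two paragraphs, this completes the proof.
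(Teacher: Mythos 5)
Your proof is correct and follows essentially the same route as the paper's: both apply \Cref{thm certain p-torsion points} and \Cref{lemma zeta_p^n v_i} to reduce the claim to an equality of characters on the $\zz$-basis $\{\gamma_{k,k',l}\}$ of $\bar\Gamma$, and both rest on identifying the index set $S_i$ with the subtree of $\mathcal I$ rooted at $i$. Your write-up fills in two steps the paper treats tersely — the explicit use of tubular-neighborhood disjointness to get the equivalence $\bar v_i \vee \bar v_j \leq \hat{\bar v}_i \iff \hat{\bar v}_j \leq \hat{\bar v}_i$, and the packaging of the right-hand character as $[k \in S_i] - [k' \in S_i]$ together with the tree properties (i)–(ii) — but these are elaborations rather than a different argument.
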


\begin{proof}
Fix an index $i \in \{1, \dots, h\}$.  Recall the tree $\mathcal{I}$ defined in \S\ref{sec berk}, and define the graph $\mathcal{I}_i$ to be the sub-tree of $\mathcal{I}$ rooted at the vertex $i$ (that is, the sub-tree consisting of all vertices ``downstream" from $i$, meaning that there is a directed path from $i$ ending at each such vertex).  Then by definition, using the notation set in \S\ref{sec berk}, we have $j \in \mathcal{I}_i$ if and only if $\hat{v}_i > \hat{v}_j$.  This condition gives us $\hat{v}_i > v_i, v_j$ and therefore $\hat{v}_i \geq v_i \vee v_j \geq v_i$, which via \Cref{prop dictionary}(a) and \Cref{prop v_i etc.}(a)(b)(c) can be translated to the condition that $\delta(v_i \vee v_j, v_i) \leq \frac{pv(p)}{p-1}$.  Thus, the divisor in $\Div_0(C)$ specified in the statement can be written as $m_i \sum_{j \in \mathcal{I}_i} ((\alpha_j, 0) - (\beta_j, 0))$.  By \Cref{thm certain p-torsion points}, we have 
\begin{equation}
\mathrm{AJ}\big(\sum_{j \in \mathcal{I}_i} m_j ((\alpha_j, 0) - (\beta_j, 0))\big) = \prod_{j \in \mathcal{I}_i} \sigma_j^{m_j n_j} = \prod_{j \in \mathcal{I}_i} \sigma_j.
\end{equation}

Let us recall the set $\{\gamma_{l, l', \nu}\}_{1 \leq l \leq h, \, 1 \leq \nu \leq p -1}$ defined in \S\ref{sec period matrix} and established as a basis of $\bar{\Gamma}$ by \Cref{cor basis of barGamma}.  We have 
\begin{equation}
\sigma_j:
\begin{cases}
\gamma_{j, j', \nu} &\mapsto \zeta_p \\
\gamma_{l, l', \nu} &\mapsto \zeta_p^{-1} \text{ if } j = l' \\
\gamma_{l, l', \nu} &\mapsto 1 \text{ if } j \neq l, l'.
\end{cases}
\end{equation}
Then for $l \notin \mathcal{I}_i$ (resp. $l \in \mathcal{I}_i \smallsetminus \{i\}$ so that $l' \in \mathcal{I}_i$; resp. $l = i \in \mathcal{I}_i$ so that $l' \notin \mathcal{I}_i$), the product $\prod_{j \in \mathcal{I}_i} \sigma_j(\gamma_{l, l', \nu})$ comes out to $1$ (resp. $\zeta_p \zeta_p^{-1} = 1$; resp. $\zeta_p$).  Therefore, $\prod_{j \in \mathcal{I}_i} \sigma_j : \bar{\Gamma} \to \boldsymbol{\mu}_p$ is the character mapping $\gamma_{i, i', \nu}$ to $\zeta_p$ for all $\nu \in \zz$ and mapping $\gamma_{l, l', \nu}$ to $1$ for all $l \neq i$ and $\nu \in \zz$.

Meanwhile, for each $n \in \zz$ viewing $\zeta_p^n \mathfrak{v}_i$ as determined by the character $\chi_{\mathfrak{v}_i} : \bar{\Gamma} \to T_p(\bar{K}^\times)$ given by the formula provided in (\ref{eq zeta_p^n v_i}) by \Cref{lemma zeta_p^n v_i}, its reduction modulo $p$ is the character 
\begin{equation}
\bar{\chi}_{\zeta_p^n \mathfrak{v}_i} : 
\begin{cases}
\gamma_{i, i', -n} &\mapsto \zeta_p^{-1} \\
\gamma_{i, i', 1 - n} &\mapsto \zeta_p \\
\gamma_{l, l', m} &\mapsto 1 \text{ if } l \neq i \text{ or } m + n \not\equiv 0, 1 \ (\text{mod} \ p).
\end{cases}
\end{equation}
Now the sum $(\sum_{n = 1}^{p - 1} n \zeta_p^n) \mathfrak{v}_i$ is determined by a character whose reduction is $\prod_{n = 1}^{p - 1} \bar{\chi}_{\zeta_p^n \mathfrak{v}_i}^n$, which sends $\gamma_{i, i', \nu}$ to $(\zeta_p^{-1})^{-\nu}(\zeta_p)^{1 - \nu} = \zeta_p$ and which sends $\gamma_{l, l', \nu}$ to $1$ for $l \neq i$.  This matches the character computed above which is represented by the divisor $\sum_{j \in \mathcal{I}_i} m_j ((\alpha_j, 0) - (\beta_j, 0)) \in \Div_0(C)$.
\end{proof}

In order to prove \Cref{thm certain p-torsion points}, we need a couple of lemmas.  The first of these is essentially \cite[Lemma 1.1]{van1983note}, but since that source is difficult to access and there is a repeated typo in one of the formulas, we rephrase (a slight generalization of) the result and its proof below.

\begin{lemma} \label{lemma automorphy van steen}

Given $a, b \in \Omega$ and an automorphism $s \in \PGL_2(K)$ which normalizes the subgroup $\Gamma < \PGL_2(K)$ and which is represented by the matrix $\begin{bsmallmatrix} A & B \\ C & D \end{bsmallmatrix}$, we have 
\begin{equation}
\Theta_{a, b}(s(z)) \equiv c \Theta_{s^{-1}a, s^{-1}b}(z),
\end{equation}
where the constant $c \in K^\times$ is given by 
\begin{equation} \label{eq c}
c = \prod_{\gamma \in \Gamma} \frac{A - C\gamma(a)}{A - C\gamma(b)} = \Theta_{a, b}(AC^{-1}) = \Theta_{a, b}(s(\infty)).
\end{equation}

\end{lemma}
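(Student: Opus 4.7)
The plan is to expand $\Theta_{a,b}(s(z))$ directly from the product definition, isolate the factor depending on $z$, and then use the hypothesis that $s$ normalizes $\Gamma$ to reindex.

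First I would compute the effect of the fractional linear transformation $s$ on a single linear factor: writing $s(z) = (Az+B)/(Cz+D)$ and clearing denominators, one obtains
\begin{equation*}
s(z) - w = \frac{(A - Cw)(z - s^{-1}(w))}{Cz + D}
\end{equation*}
for any $w$ (using $s^{-1}(w) = (Dw - B)/(A - Cw)$ to identify the constant term). Applying this with $w = \gamma(a)$ and $w = \gamma(b)$ and taking the ratio, the $Cz + D$ factors cancel, giving
\begin{equation*}
\frac{s(z) - \gamma(a)}{s(z) - \gamma(b)} = \frac{A - C\gamma(a)}{A - C\gamma(b)} \cdot \frac{z - s^{-1}\gamma(a)}{z - s^{-1}\gamma(b)}.
\end{equation*}

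Taking the product over $\gamma \in \Gamma$, the theta function splits as $\Theta_{a,b}(s(z)) = c' \cdot \prod_{\gamma \in \Gamma} (z - s^{-1}\gamma(a))/(z - s^{-1}\gamma(b))$, where $c' = \prod_\gamma (A - C\gamma(a))/(A - C\gamma(b))$. The normalization hypothesis now comes in: substituting $\gamma = s\gamma' s^{-1}$ for $\gamma' \in \Gamma$ (a bijection of $\Gamma$, since $s$ normalizes $\Gamma$) yields $s^{-1}\gamma(a) = \gamma'(s^{-1}a)$, and similarly for $b$, so the $z$-dependent product becomes precisely $\Theta_{s^{-1}a, s^{-1}b}(z)$.

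It remains to identify $c'$ with the stated values. Since $s(\infty) = \lim_{z \to \infty} (Az+B)/(Cz+D) = A/C = AC^{-1}$, evaluating $\Theta_{a,b}$ at this point and pulling $C$ into each factor immediately yields
\begin{equation*}
\Theta_{a,b}(s(\infty)) = \prod_{\gamma \in \Gamma} \frac{AC^{-1} - \gamma(a)}{AC^{-1} - \gamma(b)} = \prod_{\gamma \in \Gamma} \frac{A - C\gamma(a)}{A - C\gamma(b)} = c'.
\end{equation*}

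There is no serious obstacle: the calculation is essentially a bookkeeping exercise around the product expansion. The only thing one should check is that the rearrangements of the (convergent) infinite product defining $\Theta_{a,b}$ are legitimate, but this is already built into the fact (recalled in \S\ref{sec background jacobians}) that $\Theta_{a,b}$ converges to a meromorphic function on $\Omega$, combined with absolute convergence of each intermediate product (which follows by the same estimates that show convergence of $\Theta_{a,b}$ itself). All other ingredients are elementary manipulations of M\"obius transformations.
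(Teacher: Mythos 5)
Your proof is correct and follows essentially the same route as the paper: expand the theta-function product, split each factor $\frac{s(z)-\gamma(a)}{s(z)-\gamma(b)}$ into a $z$-independent constant times $\frac{z - s^{-1}\gamma(a)}{z - s^{-1}\gamma(b)}$, and reindex via $\gamma \mapsto s\gamma s^{-1}$ using normality. The only difference is cosmetic: the paper phrases the key factorization as an identity of $2\times 2$ matrices acting on $\proj^1$, namely $\begin{bsmallmatrix} 1 & -\gamma(a) \\ 1 & -\gamma(b) \end{bsmallmatrix}\begin{bsmallmatrix} A & B \\ C & D \end{bsmallmatrix} = \begin{bsmallmatrix} A - C\gamma(a) & 0 \\ 0 & A - C\gamma(b) \end{bsmallmatrix}\begin{bsmallmatrix} 1 & -s^{-1}\gamma(a) \\ 1 & -s^{-1}\gamma(b) \end{bsmallmatrix}$, whereas you prove the equivalent scalar identity $s(z)-w = \frac{(A-Cw)(z - s^{-1}(w))}{Cz+D}$ and cancel the $Cz+D$ terms.
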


\begin{rmk}

Having set $\infty = b_0 \in S \subset \Omega$, we have $s(\infty) \in \Omega$, and so the product in the above formula for $c$ converges.

\end{rmk}

\begin{proof}[Proof (of \Cref{lemma automorphy van steen})]

With the symbol $\cdot$ indicating the action of (the image in $\PGL_2(K)$ of) a matrix on an element of $K \cup \{\infty\}$), we compute 
\begin{equation}
\begin{aligned}
\Theta_{a, b}(s(z)) &= \prod_{\gamma \in \Gamma} \frac{s(z) - \gamma(a)}{s(z) - \gamma(b)} = \prod_{\gamma \in \Gamma} \Big(\begin{bmatrix} 1 & -\gamma(a) \\ 1 & -\gamma(b) \end{bmatrix} \begin{bmatrix} A & B \\ C & D \end{bmatrix} \cdot z \Big) \\
&= \prod_{\gamma \in \Gamma} \Big(\begin{bmatrix} A - C\gamma(a) & 0 \\ 0 & A - C\gamma(b) \end{bmatrix} \begin{bmatrix} 1 & -s^{-1}\gamma(a) \\ 1 & -s^{-1}\gamma(b) \end{bmatrix} \cdot z \Big) \\
&= \prod_{\gamma \in \Gamma} \Big(\frac{A - C\gamma(a)}{A - C\gamma(b)}\Big) \Big(\frac{z - s^{-1}\gamma s (s^{-1}(a))}{z - s^{-1}\gamma s (s^{-1}(b))}\Big) = c \prod_{\gamma \in \Gamma} \frac{z - \gamma(s^{-1}(a))}{z - \gamma(s^{-1}(b))}.
\end{aligned}
\end{equation}
\end{proof}

\begin{lemma} \label{lemma clever cancelling}

Choose an element $\gamma \in \Gamma$ and an index $i \in \{0, \dots, h\}$.  Let $n_i \in \zz$ be as in \Cref{thm certain p-torsion points}.  We have 
\begin{equation} \label{eq clever cancelling product}
\prod_{d = 0}^{p - 1} \frac{(\zeta_p^{n_i} b_i - a_i) + (1 - \zeta_p^{n_i}) s_i^d \gamma s_i^{-d}(a_i)}{(\zeta_p^{n_i} b_i - a_i) + (1 - \zeta_p^{n_i}) s_i^d \gamma s_i^{-d}(b_i)} = 1.
\end{equation}

\end{lemma}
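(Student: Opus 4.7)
The plan is to diagonalize $s_i$ and reduce the identity to an elementary product formula over $p$-th roots of unity. By \cite[Proposition 3.2]{van1982galois} (invoked in \S\ref{sec background Schottky}), $s_i \in \PGL_2(K)$ is represented by a matrix similar to $\begin{bsmallmatrix} \zeta_p^{n_i} & 0 \\ 0 & 1 \end{bsmallmatrix}$ and has $a_i, b_i$ as its fixed points. In the affine coordinate $w := (z - a_i)/(z - b_i)$ (sending $a_i \mapsto 0$, $b_i \mapsto \infty$), $s_i$ therefore acts as $w \mapsto \eta w$ for some primitive $p$-th root of unity $\eta$; primitivity follows from $\gcd(n_i, p) = 1$, which is forced by $m_i n_i \equiv 1 \pmod{p}$, and is the only property of $\eta$ that will matter.

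Setting $\zeta := \zeta_p^{n_i}$ for brevity, I would first pull back the linear function $F(z) := (\zeta b_i - a_i) + (1 - \zeta) z$ to the $w$-coordinate; a direct algebraic computation (writing $z$ in terms of $w$ and simplifying) yields the clean form $F(z) = (b_i - a_i)(\zeta - w)/(1 - w)$. Next, since $s_i$ fixes both $a_i$ and $b_i$, we have $s_i^d \gamma s_i^{-d}(a_i) = s_i^d(\gamma(a_i))$ and similarly for $b_i$, so in the $w$-coordinate these two points are $\eta^d u$ and $\eta^d v$ respectively, where $u, v$ denote the $w$-coordinates of $\gamma(a_i), \gamma(b_i)$. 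Substituting into (\ref{eq clever cancelling product}), the factors of $(b_i - a_i)$ and of $1 - w$-type denominators cancel between numerator and denominator for each $d$, reducing the left-hand side to
\begin{equation*}
\prod_{d = 0}^{p - 1} \frac{(\zeta - \eta^d u)(1 - \eta^d v)}{(\zeta - \eta^d v)(1 - \eta^d u)}.
\end{equation*}

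To finish, I would use that $\{\eta^d\}_{d = 0}^{p - 1}$ is the full set $\boldsymbol{\mu}_p$ of $p$-th roots of unity, which yields the polynomial identity $\prod_{d = 0}^{p - 1}(X - \eta^d Y) = X^p - Y^p$ in $K[X, Y]$. Applying this with $(X, Y) \in \{(\zeta, u), (\zeta, v), (1, u), (1, v)\}$ and using $\zeta^p = 1$ shows that each of the four products $\prod_d(\zeta - \eta^d u)$, $\prod_d(\zeta - \eta^d v)$, $\prod_d(1 - \eta^d u)$, $\prod_d(1 - \eta^d v)$ equals $1 - u^p$ or $1 - v^p$ as appropriate; both the numerator and denominator of the displayed product therefore equal $(1 - u^p)(1 - v^p)$, and their ratio is $1$.

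The whole argument is elementary; the only non-routine ingredient is the eigenvalue description of $s_i$ from \cite[Proposition 3.2]{van1982galois}, which has already been quoted in the paper, so I anticipate no real obstacle. The most delicate bookkeeping step is the coordinate-change calculation of $F$, but this is a rational-function identity that can be verified in one line.
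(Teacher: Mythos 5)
Your argument is correct, and it takes a genuinely different route from the paper's proof. The paper works directly with a $2\times 2$ matrix representative of $s_i^d$, introduces an auxiliary quantity $\xi_N$, and shows the $d$-th factor equals $\xi_{dn_i}^{-1}\xi_{(d-1)n_i}$, so the product telescopes. You instead pass to the coordinate $w = (z - a_i)/(z - b_i)$ that diagonalizes $s_i$, verify the identity $F(z) = (b_i - a_i)(\zeta - w)/(1 - w)$ (I checked this — the numerator collapses to $(b_i - a_i)(\zeta - w)$ as you say), and reduce the product to
\[
\prod_{d=0}^{p-1}\frac{(\zeta - \eta^d u)(1 - \eta^d v)}{(\zeta - \eta^d v)(1 - \eta^d u)},
\]
which you evaluate via $\prod_d(X - \eta^d Y) = X^p - Y^p$ and $\zeta^p = 1$. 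This is cleaner conceptually: the cancellation is visible at once rather than emerging from a telescoping bookkeeping. (Incidentally, one can check that $\eta = \zeta = \zeta_p^{n_i}$ exactly, though as you note primitivity is all that is needed.) Both proofs share the implicit assumption that none of the denominators vanish — equivalently that $\gamma(a_i), \gamma(b_i)$ avoid the $s_i$-orbit of $\infty$ in the $w$-coordinate — but this is the same caveat already present in the paper's version and in the surrounding convergence discussion, so it is not a defect specific to your approach.
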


\begin{proof}

Throughout this proof, in order to simplify notation, we drop the index $i$ from the symbols $a_i, b_i, s_i, n_i$.

We first compute a matrix in $\GL_2(K)$ representing $s^d$ for any $m \in \zz$.  As discussed in \S\ref{sec background Schottky}, there is a matrix representing $s$ which is similar to the matrix $\begin{bsmallmatrix} \zeta_p^n & 0 \\ 0 & 1 \end{bsmallmatrix}$.  The image of such a matrix in $\PGL_2(K)$ fixes $\infty$ and $0$, and, as in the set-up of \cite[Proposition 3.2]{van1982galois}, the automorphism $s$ is obtained by conjugating this by a rational linear transformation which sends $0$ and $\infty$ respectively to $a$ and $b$; such a transformation is given by $z \mapsto \frac{bz + a}{z + 1}$.  Therefore, we may compute a matrix representing $s^d$ to be 
\begin{equation} \label{eq matrix for s^d}
\begin{bmatrix} b & a \\ 1 & 1 \end{bmatrix} \begin{bmatrix} \zeta_p^n & 0 \\ 0 & 1 \end{bmatrix}^d \begin{bmatrix} b & a \\ 1 & 1 \end{bmatrix}^{-1} = \begin{bmatrix} \zeta_p^{dn} b - a & (1 - \zeta_p^{dn})ab \\ (\zeta_p^{dn} - 1) & b - \zeta_p^{dn} a \end{bmatrix}.
\end{equation}

Now for any $N \in \zz$, let us write 
\begin{equation*}
\xi_N = \frac{(\zeta_p^N - 1) \gamma(a) + (b - \zeta_p^N a)}{(\zeta_p^N - 1) \gamma(b) + (b - \zeta_p^N a)} \in K.
\end{equation*}
Noting that as the elements $a, b \in \Omega$ are each fixed by $s$, we have $\gamma s^{-d}(a) = \gamma(a)$ and $\gamma s^{-d}(b) = \gamma(b)$, we now compute the term in the product in (\ref{eq clever cancelling product}) corresponding to a fixed $d$ as 
\begin{equation}
\begin{aligned}
&\ \ \ \ \frac{(\zeta_p^n b - a) + (1 - \zeta_p^n) s^d(\gamma(a))}{(\zeta_p^n b - a) + (1 - \zeta_p^n) s^d(\gamma(b))} = \frac{(\zeta_p^n b - a) + (1 - \zeta_p^n) \big(\frac{(\zeta_p^{dn}b - a)\gamma(a) + (1 - \zeta_p^{dn})a b}{(\zeta_p^{dn} - 1)\gamma(a) + (b - \zeta_p^{dn}a)}\big)}{(\zeta_p^n b - a) + (1 - \zeta_p^n) \big(\frac{(\zeta_p^{dn}b - a)\gamma(b) + (1 - \zeta_p^{dn})a b}{(\zeta_p^{dn} - 1)\gamma(b) + (b - \zeta_p^{dn}a)}\big)} \\
&= {\scriptstyle \Big(\frac{(\zeta_p^{dn} - 1)\gamma(b) + (b - \zeta_p^{dn}a)}{(\zeta_p^{dn} - 1)\gamma(a) + (b - \zeta_p^{dn}a)}\Big) }
{\scriptstyle \frac{(\zeta_p^n b - a)(\zeta_p^{dn} - 1)\gamma(a) + (\zeta_p^n b - a)(b - \zeta_p^{dn}a) + (1 - \zeta_p^n)(\zeta_p^{dn}b - a)\gamma(a) + (1 - \zeta_p^n)(1 - \zeta_p^{dn})ab)}{(\zeta_p^n b - a)(\zeta_p^{dn} - 1)\gamma(b) + (\zeta_p^n b - a)(b - \zeta_p^{dn}a) + (1 - \zeta_p^n)(\zeta_p^{dn}b - a)\gamma(b) + (1 - \zeta_p^n)(1 - \zeta_p^{dn})ab)} } \\
&= \Big(\frac{(\zeta_p^{dn} - 1)\gamma(b) + (b - \zeta_p^{dn}a)}{(\zeta_p^{dn} - 1)\gamma(a) + (b - \zeta_p^{dn}a)}\Big) \Big(\frac{\zeta_p^n(b - a)\big((\zeta_p^{dn - n} - 1)\gamma(a) + (b - \zeta_p^{dn - n}a)\big)}{\zeta_p^n(b - a)\big((\zeta_p^{dn - n} - 1)\gamma(b) + (b - \zeta_p^{dn - n}a)\big)}\Big) \\
&= \xi_{dn}^{-1} \xi_{dn - n}.
\end{aligned}
\end{equation}

The product in (\ref{eq clever cancelling product}) can thus be written as $\prod_{d = 0}^{p - 1} \xi_{dn}^{-1} \xi_{(d - 1)n}$, in which all terms clearly cancel (as $\xi_N$ only depends on $N$ modulo $p$) so that the product becomes $1$.
\end{proof}

\begin{lemma} \label{lemma automorphy for s}

Putting $a = a_i$, $b = b_i$, and $s = s_i$ into the statement of \Cref{lemma automorphy van steen} and letting $n_i$ be as in \Cref{thm certain p-torsion points}, the constant $c \in K^\times$ guaranteed by \Cref{lemma automorphy van steen} equals $\zeta_p^{n_i}$.

\end{lemma}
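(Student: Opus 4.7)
The plan is to evaluate the constant $c$ furnished by Lemma~\ref{lemma automorphy van steen} (applied with $a = a_i$, $b = b_i$, $s = s_i$) by inserting the explicit matrix for $s_i$ obtained in the proof of Lemma~\ref{lemma clever cancelling}, and then to use Lemma~\ref{lemma clever cancelling} itself to collapse almost all of the resulting infinite product. Setting $d = 1$ in \eqref{eq matrix for s^d} gives $s_i$ represented by a matrix with $A = \zeta_p^{n_i} b_i - a_i$ and $C = \zeta_p^{n_i} - 1$; substituting these into \eqref{eq c} yields
\[
c \; = \; \prod_{\gamma \in \Gamma} \frac{(\zeta_p^{n_i} b_i - a_i) + (1 - \zeta_p^{n_i})\, \gamma(a_i)}{(\zeta_p^{n_i} b_i - a_i) + (1 - \zeta_p^{n_i})\, \gamma(b_i)}.
\]

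First I would handle the factor corresponding to $\gamma = 1$ by direct algebra, obtaining $\zeta_p^{n_i}(b_i - a_i)/(b_i - a_i) = \zeta_p^{n_i}$. It then suffices to show that the product of the remaining factors (indexed over $\gamma \in \Gamma \smallsetminus \{1\}$) equals $1$. For this I would partition $\Gamma \smallsetminus \{1\}$ into orbits under the conjugation action of $\langle s_i \rangle$ on $\Gamma$ (well defined since $\Gamma \lhd \Gamma_0$). Lemma~\ref{lemma clever cancelling} asserts exactly that the product of the factors corresponding to an orbit $\{s_i^d \gamma s_i^{-d}\}_{0 \le d \le p-1}$ of size $p$ equals $1$, so the argument reduces to verifying that every such orbit has cardinality exactly $p$.

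The main obstacle is therefore this orbit-size claim, which I would derive from the free-product structure $\Gamma_0 \cong \langle s_0 \rangle \ast \cdots \ast \langle s_h \rangle$. In such a free product, the centralizer in $\Gamma_0$ of any non-identity element of a finite free factor coincides with its centralizer in that factor, so for $d \not\equiv 0 \pmod{p}$ the centralizer of $s_i^d$ in $\Gamma_0$ equals $\langle s_i \rangle$. Intersecting with the index-$p$ subgroup $\Gamma$ gives the trivial group, since $s_i \notin \Gamma$; consequently no nontrivial element of $\Gamma$ is centralized by any $s_i^d$ with $d \not\equiv 0 \pmod{p}$, and all orbits in $\Gamma \smallsetminus \{1\}$ have size $p$ as required.

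Finally, I would briefly justify the rearrangement of the infinite product by orbits: since $a_i, b_i \in \Omega$ lie outside the limit set of $\Gamma$, the orbits $\Gamma(a_i)$ and $\Gamma(b_i)$ accumulate only on that limit set, so the individual factors tend to $1$ fast enough (non-archimedeanly) for the product to converge absolutely and be insensitive to reordering---this is part of the standard convergence theory of the theta functions $\Theta_{a, b}$ recalled in \cite[\S II.2.3]{gerritzen2006schottky}. Combining everything yields $c = \zeta_p^{n_i} \cdot \prod_{\mathrm{orbits}} 1 = \zeta_p^{n_i}$, as desired.
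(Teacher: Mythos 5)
Your proof is correct and takes essentially the same approach as the paper: substitute the explicit matrix for $s_i$ into the formula for $c$ from Lemma~\ref{lemma automorphy van steen}, isolate the $\gamma = 1$ factor (which evaluates to $\zeta_p^{n_i}$), and invoke Lemma~\ref{lemma clever cancelling} to collapse the product over each size-$p$ conjugation orbit in $\Gamma \smallsetminus \{1\}$. Where the paper simply asserts that all nontrivial conjugation orbits have size $p$ ``from the group structure of $\Gamma$,'' you supply the supporting centralizer argument in the free product $\Gamma_0 \cong \langle s_0 \rangle * \cdots * \langle s_h \rangle$, a welcome extra detail that does not change the route.
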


\begin{proof}

Using the formula for $c$ provided by \Cref{lemma automorphy van steen} and the fact that $s_i$ is represented by the matrix given in (\ref{eq matrix for s^d}), we have 
\begin{equation} \label{eq formula for automorphy for s}
c = \prod_{\gamma \in \Gamma} \frac{(\zeta_p^{n_i} b_i - a_i) + (1 - \zeta_p^{n_i}) \gamma(a_i)}{(\zeta_p^{n_i} b_i - a_i) + (1 - \zeta_p^{n_i}) \gamma(b_i)}.
\end{equation}
Now it follows from the group structure of $\Gamma$ discussed in \S\ref{sec background Schottky} that each orbit of $\Gamma$ under the action of conjugation by $s_i$ has cardinality $p$ except for the orbit consisting of the identity.  The terms of the product in (\ref{eq formula for automorphy for s}) consist of the term corresponding to $\gamma = 1$ and the terms corresponding to each of the $p$ elements of $\Gamma$ in each of the orbits $\{s_i^d \gamma_0 s_i^{-d}\}_{0 \leq d \leq p - 1}$ represented by an element $\gamma_0 \in \Gamma \smallsetminus \{1\}$.  For each such orbit $\{s_i^d \gamma_0 s_i^{-d}\}_{0 \leq d \leq p - 1}$, the product of the terms corresponding to elements in that orbit comes out to $1$ by \Cref{lemma clever cancelling}.  The formula for $c$ in (\ref{eq formula for automorphy for s}) therefore simplifies to just the term in the product corresponding to $\gamma = 1$; thus, we may compute 
\begin{equation}
c = \frac{(\zeta_p^{n_i} b_i - a_i) + (1 - \zeta_p^{n_i}) a_i}{(\zeta_p^{n_i} b_i - a_i) + (1 - \zeta_p^{n_i}) b_i} = \frac{\zeta_p^{n_i}b_i - \zeta_p^{n_i}a_i}{b_i - a_i} = \zeta_p^{n_i}.
\end{equation}
\end{proof}

\enlargethispage{\baselineskip}

We are now ready to prove \Cref{thm certain p-torsion points}; most of the ideas of our argument are a variation on the proof of \cite[Proposition 1.4]{van1983note}.

\begin{proof}[Proof (of \Cref{thm certain p-torsion points})]

Fix an index $i$.  Recalling the elements $\gamma_{j, i, n} = \gamma_i^{n - 1} \gamma_j \gamma_i^{-n} \in \Gamma$ in \S\ref{sec period matrix} for $j \neq  i$ and $1 \leq n \leq p - 1$ (note the reversed labeling of the indices), one easily sees that these $(p - 1)h$ elements generate the group $\Gamma$ (and thus that their images in $\bar{\Gamma}$, also denoted $\gamma_{j, i, d}$, generate $\bar{\Gamma}$).

Recalling from \S\ref{sec background Schottky} that the images of the points $a_i, b_i \in \Omega$ modulo the action of $\Gamma_0$ are respectively $\alpha_i, \beta_i \in K \cup \{\infty\}$, the image of the divisor $(\alpha_i, 0) - (\beta_i, 0)$ under the Abel-Jacobi map is $c_{a_i, b_i} \in T$.  Our task is therefore to show that, for $1 \leq n \leq p - 1$ and $j \neq i$, we have 
\begin{equation}
c_{a_i, b_i}(\gamma_{j, i, n}) = \zeta_p^{-n_i} = \sigma_i^{n_i}(\gamma_{j, i, n}).
\end{equation}
Fix an integer $n \in \{0, \dots, p - 1\}$ and an index $j \neq i$.  To compute $c_{a_i, b_i}(\gamma_{j, i, n})$ in terms of theta functions, we may put $z = s_i^n(a_j) \in \Omega$ and exploit the fact that $a_j$ is fixed by $s_j$.  Using Lemmas \ref{lemma automorphy van steen} and \ref{lemma automorphy for s}, we compute 
\begin{equation}
\begin{aligned}
c_{a_i, b_i}(\gamma_{j, i, n}) = \Theta_{a_i, b_i}(\gamma_{j, i, n}(z)) \Theta_{a_i, b_i}(z)^{-1} &= \big[\Theta_{a_i, b_i}(s_i^{n - 1}(a_j))\big] \big[\Theta_{a_i, b_i}(s_i^n(a_j))\big]^{-1} \\
&= \big[\zeta_p^{(n - 1)n_i} \Theta_{a_i, b_i}(a_j)\big] \big[\zeta_p^{nn_i} \Theta_{a_i, b_i}(a_j)\big]^{-1} = \zeta_p^{-n_i}.
\end{aligned}
\end{equation}

\end{proof}

\bibliographystyle{plain}
\bibliography{bibfile}

\end{document}